\newtheorem{thm}{Theorem}[section]
\newtheorem{prop}[thm]{Proposition}
\newtheorem{lem}[thm]{Lemma}
\newtheorem{cor}[thm]{Corollary}
\theoremstyle{definition}
\newtheorem{defn}[thm]{Definition}
\newtheorem{remk}[thm]{Remark}
\newtheorem{remks}[thm]{Remarks}
\newtheorem{exm}[thm]{Example}
\newtheorem{exms}[thm]{Examples}
\newtheorem{notat}[thm]{Notation}
\numberwithin{equation}{section}
\newcommand{\sA}{{\mathcal A}}
\newcommand{\sB}{{\mathcal B}}
\newcommand{\sC}{{\mathcal C}}
\newcommand{\sD}{{\mathcal D}}
\newcommand{\sE}{{\mathcal E}}
\newcommand{\sF}{{\mathcal F}}
\newcommand{\sG}{{\mathcal G}}
\newcommand{\sH}{{\mathcal H}}
\newcommand{\sI}{{\mathcal I}}
\newcommand{\sK}{{\mathcal K}}
\newcommand{\sM}{{\mathcal M}}
\newcommand{\sN}{{\mathcal N}}
\newcommand{\sO}{{\mathcal O}}
\newcommand{\sT}{{\mathcal T}}
\newcommand{\sU}{{\mathcal U}}
\newcommand{\C}{{\mathbb C}}
\renewcommand{\H}{{\mathbb H}}
\newcommand{\R}{{\mathbb R}}
\newcommand{\Z}{{\mathbb Z}}
\newcommand{\surj}{\twoheadrightarrow}
\newcommand{\inj}{\hookrightarrow}
\newcommand{\wt}{\widetilde}
\newcommand{\ds}{{/\kern-3pt/}}
\newcommand{\un}{\underline}
\newcommand{\ov}{\overline}
\begin{document}
\title{Nisnevich descent for Deligne Mumford stacks}
\author{Amalendu Krishna, Paul Arne {\O}stvaer}
\address{School of Mathematics, Tata Institute of Fundamental Research,  
Homi Bhabha Road, Colaba, Mumbai, India.}
\email{amal@math.tifr.res.in}
\address{Department of Mathematics, University of Oslo,
PO Box 1053, Blindern, Oslo, Norway.}
\email{paularne@math.uio.no} 
\baselineskip=10pt 
  
\keywords{Stacks, K-theory, Descent}        

\subjclass[2010]{Primary 19E08; Secondary 14C35, 14F20}
\maketitle

\begin{abstract}
We prove the excision property for the $K$-theory of perfect complexes on 
Deligne-Mumford stacks. We define the Nisnevich site on the category
of such stacks which restricts to the usual Nisnevich site on schemes.
Using the refinements of the localization sequence of \cite{Krishna} and
\cite{Toen}, we then show that the $K$-theory of perfect complexes satisfies 
the Nisnevich descent on the category of tame Deligne-Mumford stacks.
This is done by showing that the above Nisnevich site is given by
a $cd$-structure which is complete, regular and bounded. 
\end{abstract}
\section{Introduction}
It is by now well known that the localization and the Mayer-Vietoris
are one of the fundamental desired properties for the algebraic $K$-theory of 
schemes. It was shown by Brown and Gersten \cite{BG} that the above two 
properties yield the Zariski descent for the $K$-theory of 
coherent sheaves on schemes. In this paper, we concentrate on the study of
such properties for the $K$-theory of perfect complexes which is
a much more difficult task than the $K$-theory of coherent sheaves.
 
On of the main features of the descent property is that it essentially reduces
the problem of computing the $K$-group of a scheme in terms of the
$K$-groups of local schemes which is expectedly an easier task. For the
reason of reducing the study of $K$-theory to the local calculations on
better behaved schemes and stacks, it is only desirable that one has a
descent property for the $K$-theory with respect to the Zariski and possibly
finer topologies. The Zariski and the Nisnevich descent for the 
$K$-theory of perfect complexes on schemes was established by Thomason
and Trobaugh \cite{TT}. They proved it by showing that this $K$-theory
has the localization and the excision properties, both of which are difficult
results in their own way. 

The K-theory of quotient stacks that result from the action of a given group 
on schemes was developed by Thomason \cite{Thomason2}. The algebraic 
$K$-theory of stacks were later studied in \cite{Joshua1} and \cite{Toen1},
where the localization sequence for the $K$-theory of coherent sheaves
was established. The general form of the localization sequence for the 
$K$-theory of perfect complexes on stacks is still not known. However,
for the Deligne-Mumford stacks which are tame, have coarse moduli schemes
and have the resolution property, such a localization sequence was first 
shown to exist in \cite{Krishna}. A more general and categorical form of such a
localization sequence has been recently obtained by T{\"o}en \cite{Toen}.

The first form of descent theorem for the $K$-theory of stacks was 
proven by Thomason in \cite{Thomason3}, where he showed that the
mod-$n$ $K$-theory of perfect complexes on smooth quotient stacks arising 
from the action of a fixed smooth and affine group scheme on schemes, satisfies
the descent with respect to the isovariant \'etale topology if one
inverts the Bott element. Our goal in this paper is to prove the 
localization, excision and descent theorems for the $K$-theory of perfect 
complexes on Deligne-Mumford stacks in as general form as possible. We
briefly describe these results in the following paragraphs.

Let $S$ be a fixed noetherian affine scheme. We denote the category of
separated Deligne-Mumford stacks of finite type over $S$ by ${\sD}{\sM}_S$. 
Using the characterization of perfect complexes in terms of the compact
objects in the unbounded derived category of complexes of sheaves of
$\sO_X$-modules with quasi-coherent cohomology sheaves and its version with 
support, and then combining this with the results of \cite{Toen}, we first
establish the appropriate localization for the $K$-theory of perfect 
complexes on separated and tame Deligne-Mumford stacks in 
Theorem~\ref{thm:Localization}. Our next main result of this paper
is Theorem~\ref{thm:Excision}, where we establish an excision theorem
for the above class of stacks. This is the first result of its kind for
stacks and generalizes the corresponding result of \cite{TT} in the
world of stacks. As a consequence of the above results, we show in 
Corollary~\ref{cor:MV} that the $K$-theory of perfect complexes
satisfies the Mayer-Vietoris property in the category of separated and tame
Deligne-Mumford stacks with coarse moduli schemes. This plays a very crucial
role in our proof of the Nisnevich descent for the $K$-theory.

In Section~\ref{section:NIS}, we introduce the Nisnevich site on the 
category ${\sD}{\sM}_S$. It turns out that this is a Grothendieck site 
and its restriction to the full subcategory of schemes is the
usual Nisnevich site for schemes. 
We also introduce a Nisnevich $cd$-structure on ${\sD}{\sM}_S$ in the
sense of \cite{Voev1}. In Theorem~\ref{thm:Nis-cd-main}, we show that
the Nisnevich site of ${\sD}{\sM}_S$ is in fact given by the above
$cd$-structure. It is further shown in Section~\ref{section:Nis-CRB}
that this $cd$-structure is complete, regular and bounded.
Our final main result is Theorem~\ref{thm:MNDS}, where we show that
the $K$-theory of perfect complexes satisfies the Nisnevich descent for
representable maps. We also draw several consequences, one of which is
to show that the Nisnevich (and Zariski) cohomological dimension of a stack 
in ${\sD}{\sM}_S$ is bounded by the dimension of the stack. This is first
result of its kind for stack. We also deduce a Brown-Gertsten spectral
sequence for the $K$-theory of such stacks.

The computation of the equivariant algebraic $K$-theory and 
equivariant higher Chow groups of schemes with group action has been
a very active area of research ever since Thomason developed this theory.
It often turns out the the equivariant K-theory is easier to compute
using the tools of representation theory and one can often use them to
compute the non-equivariant $K$-theory via spectral sequences. 
We refer to \cite{Krishna1} for some results in this direction and
to \cite{Merkurjev} for applications of these techniques. It is often
difficult to prove various expected results about the equivariant $K$-theory
which are otherwise known in the non-equivariant case. Our 
perspective is to look at these equivariant $K$-groups as special cases
of the $K$-groups of stacks, which constitute a much larger class of
geometric objects and where one can develop the general theory
and prove those results for stacks which are hitherto known only for schemes,
using the categorical point of view. 

We end the introduction with some remarks about our ongoing and future
work of which the present paper is the first part. Our main motivation
for writing this paper was to undertake a deeper study of the $K$-theory
of perfect complexes on stacks. In the upcoming sequel \cite{CKO} to 
this paper, we study the homotopy $K$-theory of Deligne-Mumford stacks, 
introduce the $cdh$-site of such stacks and the final goal there is to
show that the homotopy $K$-theory of stacks satisfies the $cdh$-descent.
We hope to deduce several important consequences of these results for the
$K$-theory of stacks which have been proven for schemes in recent years.

Finally, as the reader will observe, our focus in this paper is to study 
certain fundamental properties of the $K$-theory of Deligne-Mumford stacks. 
There is a vast majority of stacks occurring in geometry which are
of this kind. A very interesting class of such stacks are the moduli stacks 
of stable maps from $n$-pointed stable curves of genus $g$ to a projective 
variety $X$, denoted by ${\sM}_{g,n}(X, \beta)$. There has been a lot
of interset in the $K$-theory and quantum $K$-theory of these moduli
stacks from the point of view of Gromov-Witten theory.
One should notice that these moduli stacks are mostly singular even if $X$ is 
smooth. Nonetheless, it would be interesting to know if the results of this 
paper can be proven for more general Artin stacks.
As the techniques used in this paper suggest, such a generalization
is very much expected and we hope to take this up in a future project.
 
\section{Preliminaries}\label{section:Prelims}
In this section, we recall the definition and some basic properties of 
Deligne-Mumford stacks which we shall use frequently. We refer the reader to 
\cite{LMB} for more details. All the stacks in this article will be
defined over a fixed noetherian and base scheme $S$. In some cases, we shall 
take $S$ to be the spectrum of a field in this paper.  

\begin{defn}\label{defn:stacks} A stack $X$ is called a Deligne-Mumford
stack if \\
$(1)$ The diagonal ${\Delta}_X : X \to X {\times}_S X$ is representable,
quasi-compact and separated. \\
$(2)$ There is an $S$-scheme $U$ and an \'etale surjective morphism
$U \to X$.
\end{defn}
The scheme $U$ is called an atlas of $X$. Note that the representability
of the diagonal implies that any atlas $U \to X$ such as in the second
condition above is automatically representable.
The stack $X$ is called separated if the diagonal is proper. It is
called a noetherian stack if $U$ can be chosen to be a noetherian
scheme. An algebraic space over $S$ is a Deligne-Mumford stack where
the diagonal is an embedding. It is separated if the diagonal is
a closed embedding. We say that $X$ is of finite type over $S$ if it
has an atlas which is of finite type over $S$. The following well known
theorem provides a large class of examples of Deligne-Mumford stacks.

\begin{thm}[Deligne-Mumford, \cite{DM}]\label{thm:DM}
Let $X/S$ be a noetherian scheme of finite type and let $G/S$ be a
smooth affine group scheme (of finite type over $S$) acting on $X$
such that the stabilizers of geometric points are finite and reduced.
Then the quotient stack $[X/G]$ ({\sl cf.} \cite[7.17]{Vistoli}) is a 
noetherian
Deligne-Mumford stack. If the stabilizers are trivial, then $[X/G]$ is 
an algebraic space. Furthermore, the stack is separated if and only if the 
action is proper.
\end{thm}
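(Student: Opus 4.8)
The plan is to verify conditions $(1)$ and $(2)$ of Definition~\ref{defn:stacks} for the quotient stack $[X/G]$, whose objects over an $S$-scheme $T$ are pairs $(P,\phi)$ consisting of a $G$-torsor $P \to T$ together with a $G$-equivariant morphism $\phi \colon P \to X$. The trivial torsor defines a canonical morphism $p \colon X \to [X/G]$, and because $G/S$ is smooth and affine, the pullback of $p$ along any $T \to [X/G]$ is a $G$-torsor which is affine over $T$, hence a scheme; thus $p$ is representable, affine, smooth and surjective, and $[X/G]$ is an algebraic stack with $p$ a smooth presentation. What remains is to upgrade $p$ to an \emph{\'etale} atlas and to analyze the diagonal, and this is precisely where the hypotheses on the stabilizers enter. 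Noetherianness will be immediate once an atlas of finite type is produced, since $X$ is of finite type over the noetherian scheme $S$.

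First I would describe the diagonal explicitly. A direct computation with torsors identifies the fiber product
\[
X \times_{[X/G]} X \;\cong\; G \times_S X,
\]
where the two projections to $X$ send $(g,x)$ to $x$ and to $g\cdot x$ respectively. Since $G$ is affine of finite type over $S$, this realizes $X \times_{[X/G]} X$ as affine of finite type over $X$, so $p$ is representable, separated and quasi-compact; pulling back along $p \times p$ transfers these properties to the diagonal $\Delta \colon [X/G] \to [X/G] \times_S [X/G]$, establishing condition $(1)$.

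The main obstacle, and the heart of the theorem, is condition $(2)$. The stabilizer group scheme $\Sigma \to X$ is the pullback of $\Delta$ along $(p,p) \colon X \to [X/G] \times_S [X/G]$, concretely $\Sigma = \{(g,x) \in G \times_S X : g\cdot x = x\}$, and its fibers over $X$ are the stabilizers. The hypothesis that the geometric stabilizers are \emph{finite and reduced} says exactly that each geometric fiber of $\Sigma \to X$ is a finite reduced group scheme over an algebraically closed field, hence \'etale; equivalently $\Sigma \to X$ is unramified, i.e.\ $\Delta$ is unramified. I would then invoke the standard criterion that an algebraic stack with unramified diagonal is Deligne-Mumford: unramifiedness of $\Delta$ permits replacing the smooth presentation $p$ by an \'etale one, by choosing through each point a locally closed subscheme of $X$ meeting the $G$-orbits transversally, so that the induced $U \to X \to [X/G]$ is \'etale and surjective. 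This slicing step genuinely uses reducedness of the stabilizers, not merely their finiteness, and is the technical core of the argument.

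Finally, for the last two assertions, pulling back $\Delta$ along $p \times p$ yields the action morphism $G \times_S X \to X \times_S X$, $(g,x) \mapsto (x, g\cdot x)$, so $\Delta$ is proper if and only if this morphism is proper, which is exactly the condition that the $G$-action be proper; hence $[X/G]$ is separated if and only if the action is proper, with $\Delta$ then a closed embedding. When all stabilizers are trivial the action is free, $\Sigma \to X$ is the identity section, and $\Delta$ becomes a representable monomorphism; being moreover unramified and locally of finite type it is an immersion, so by the paper's definition $[X/G]$ is an algebraic space, closed when the action is in addition proper.
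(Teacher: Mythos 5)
The paper itself offers no proof of this statement: it is quoted from \cite{DM} (the standard modern reference being \cite[Th\'eor\`eme~8.1]{LMB} together with the construction of quotient stacks), so your proposal can only be measured against the standard argument in the literature --- and that is essentially what you reproduce. The torsor description of $[X/G]$, the smooth affine atlas $p$, the identification $X\times_{[X/G]}X\cong G\times_S X$ so that the diagonal pulls back along $p\times p$ to the action map $\Psi\colon G\times_S X\to X\times_S X$, the translation ``geometric stabilizers finite and reduced $\Leftrightarrow$ $\Delta$ unramified'' (via the observation that nonempty fibers of $\Psi$ are torsors under stabilizers), and ``action proper $\Leftrightarrow$ $\Delta$ proper'' by smooth-local descent of properness: all of this is correct and is how one proves the theorem today. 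Be aware, however, that the ``standard criterion that an algebraic stack with unramified diagonal is Deligne--Mumford,'' which you invoke for condition $(2)$, \emph{is} the substance of the Deligne--Mumford theorem; the slicing argument you gesture at is exactly what \cite{DM} and \cite[Th\'eor\`eme~8.1]{LMB} prove. So your write-up is a correct reduction to that criterion rather than an independent proof --- a fair move, since the paper likewise just cites \cite{DM}, but it should be presented as a citation, not as a routine step.

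There is, though, one genuinely false step: the claim that a representable monomorphism which is unramified and locally of finite type must be an immersion. This implication fails in general: puncture the normalization of a nodal curve $C$ at one of the two preimages of the node; the resulting map $\A^1\setminus\{0\}\to C$ is an unramified, finite-type monomorphism (it is universally injective and unramified) but is not an immersion, since it is bijective without being an isomorphism. The failure is not hypothetical in your setting either, because the theorem does not assume $X$ separated: for the free $\Z/2$-action on the affine line with doubled origin over a field of characteristic $\neq 2$ (swap the two origins and $x\mapsto -x$), the quotient is the classical non-locally-separated algebraic space, whose diagonal is a monomorphism but \emph{not} an immersion. The correct route to the second assertion is different: trivial stabilizers make $\Psi$, hence $\Delta$, a monomorphism; a stack whose diagonal is a monomorphism is fibered in setoids, i.e.\ equivalent to a sheaf, and a sheaf admitting an \'etale atlas (already produced in the first part) and representable quasi-compact separated diagonal is an algebraic space in the usual sense. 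If one insists on the paper's stricter definition of an algebraic space (diagonal an embedding), one needs additional hypotheses --- for instance $X$ separated with $G$ finite, or a proper action --- under which $\Delta$ becomes a proper monomorphism and hence a closed immersion.
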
 

\begin{remk}\label{remk:general}
If the action ${\Psi}: G \times X \to X \times X$ is proper, then
the fiber of the action map over a geometric point $x$ of $X$ is
a closed group subscheme of $G$ which is proper over ${\rm Spec}(k(x))$.   
Now since $G$ is affine over $S$, this fiber must be finite. Thus 
proper actions have finite stabilizers. In particular, if $S$ is
defined over a field of characteristic zero, then the quotient stack
$[X/G]$ for a proper action is a Deligne-Mumford stack. However,
in positive characteristic, this need not be true since the stabilizers
of geometric points can be non-reduced.
\end{remk} 

A {\sl stack} in this paper will always mean a noetherian and separated
Deligne-Mumford stack over a noetherian base scheme $S$,
unless mentioned otherwise. For a noetherian and
affine base scheme $S$, we shall denote the category of noetherian and
separated Deligne-Mumford stacks over $S$ by ${\sD}{\sM}_S$. Let $Aff/S$ 
denote the category of morphisms $T \to S$, where $T$ is an affine scheme.
The category ${\sD}{\sM}_S$ is clearly closed under fiber products. To say 
more about this category, recall that a stack $X$ is quasi-compact if it 
has an \'etale atlas $U \to X$ such that $U$ is quasi-compact. In particular, 
all noetherian stacks are quasi-compact. We also recall that a morphism 
$f : X \to Y$ of stacks is {\sl quasi-compact} if for all $T \in Aff/S$ and
all morphisms $T \xrightarrow{g} Y$, the stack $X {\times}_Y T$ is 
quasi-compact. A morphism $f : X \to Y$ of stacks is called {\sl separated} if
the diagonal $\Delta_X : X \to X {\times}_Y X$ is proper, i.e., it is
universally closed. The morphism $f$ is said to be {\sl representable} if
for every morphism $Z \xrightarrow{g} Y$, where $Z$ is an algebraic space,
the stack $X {\times}_Y Z$ is also an algebraic space. We say that $f$ is
{\sl strongly representable} if $X \times_Y Z$ is a scheme whenever
$Z$ is a scheme. It follows from \cite[Lemme~4.2]{LMB} that any morphism
$f : X \to Y$, where $X$ is a scheme, is strongly representable. This
fact will be used in this paper repeatedly without any further mention
of the above reference. 

\begin{prop}\label{prop:QCS}
Every morphism $f : X \to Y$ in ${\sD}{\sM}_S$ is quasi-compact and
separated.
\end{prop}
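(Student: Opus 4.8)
The plan is to reduce both assertions to the definitions of quasi-compactness and separatedness, using the standing hypothesis that every stack in ${\sD}{\sM}_S$ is noetherian and separated over $S$. First I would treat quasi-compactness. By definition a morphism $f:X \to Y$ is quasi-compact if for every $T \in Aff/S$ and every $T \xrightarrow{g} Y$ the stack $X \times_Y T$ is quasi-compact. The key point is that $X$ itself is noetherian, hence quasi-compact, and that $X \times_Y T$ is obtained from $X$ by a base change. So I would argue that the projection $X \times_Y T \to X$ is quasi-compact (since $Y$, being noetherian and separated, has quasi-compact and separated diagonal, so the morphism $g$ is quasi-compact), and the composition of quasi-compact maps with a quasi-compact source remains quasi-compact. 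Concretely, choosing a quasi-compact atlas $U \to X$, one pulls it back to get an atlas of $X \times_Y T$ that is quasi-compact, establishing the claim.

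For separatedness, I would invoke the definition that $f:X\to Y$ is separated when the relative diagonal $\Delta_{X/Y}:X \to X \times_Y X$ is proper. The natural strategy is to compare the relative diagonal $\Delta_{X/Y}$ with the absolute diagonal $\Delta_{X/S}:X \to X \times_S X$, which is proper by hypothesis since $X$ is a separated stack over $S$. There is a standard factorization fitting $\Delta_{X/Y}$ into a commutative diagram with $\Delta_{X/S}$, via the canonical map $X \times_Y X \to X \times_S X$. The essential observation is that the relative diagonal is obtained from the absolute diagonal by a base change along $X \times_Y X \to X \times_S X$, or alternatively that $\Delta_{X/Y}$ is a closed immersion composed with a base change of $\Delta_{X/S}$. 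Since properness is stable under base change and composition with closed immersions is proper, $\Delta_{X/Y}$ is proper, giving separatedness of $f$.

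I expect the main obstacle to lie in the separatedness argument, specifically in making the comparison between the relative and absolute diagonals precise in the stacky setting. For schemes this is the elementary fact that $\Delta_{X/Y}$ is always an immersion and that $\Delta_{X/Y} = p \circ \Delta_{X/S}$ where one factors through the fiber product; but for Deligne-Mumford stacks the diagonals are representable morphisms rather than immersions, so one must verify that the relevant square
\[
\begin{CD}
X @>{\Delta_{X/Y}}>> X \times_Y X \\
@| @VVV \\
X @>{\Delta_{X/S}}>> X \times_S X
\end{CD}
\]
is cartesian, which reduces the properness of $\Delta_{X/Y}$ to that of $\Delta_{X/S}$ by base change. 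Verifying this requires a little care with the universal properties of the two fiber products and with the fact that properness of representable morphisms of stacks is checked after base change to schemes, where one then applies the corresponding statement from \cite{LMB}. Once the cartesian square is in hand, both statements follow formally, so the proof is essentially a matter of correctly organizing these base-change and stability properties rather than any deep new input.
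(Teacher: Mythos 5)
The quasi-compactness half of your argument is correct: since $\Delta_{Y/S}$ is quasi-compact by Definition~\ref{defn:stacks}, every morphism $T \to Y$ from an affine scheme is quasi-compact (graph factorization), hence $X\times_Y T \to X$ is quasi-compact by base change, and pulling back a quasi-compact atlas of the noetherian stack $X$ gives a quasi-compact atlas of $X\times_Y T$; this is a slightly more direct organization than the paper's reduction to the case of affine $Y$, with the same content. The separatedness half, however, contains a genuine error, located exactly where you anticipated the main obstacle. The square you display is \emph{not} $2$-cartesian for Deligne-Mumford stacks: the $2$-fiber product $(X\times_Y X)\times_{X\times_S X} X$ parametrizes pairs consisting of a point $x$ of $X$ together with an automorphism of $f(x)$ in $Y$, i.e. it is $X\times_Y I_{Y/S}$, where $I_{Y/S} = Y\times_{Y\times_S Y} Y$ is the inertia stack of $Y$. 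This agrees with $X$ only when the objects of $Y$ hit by $f$ have trivial automorphism groups. For a concrete failure, take $X = Y = [S/G]$ with $G$ a nontrivial finite group and $f = \id$: then $X\times_Y X = X$ and $\Delta_{X/Y} = \id$, while the fiber product in your square is the inertia stack $[G/G]$ (conjugation action), which is not $[S/G]$. So $\Delta_{X/Y}$ is \emph{not} a base change of $\Delta_{X/S}$, and the proposed reduction collapses precisely because of the stacky automorphisms it was meant to handle.

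The conclusion, and your fallback phrasing, can be repaired, but it needs a different diagram. The square that \emph{is} $2$-cartesian is
\[
\begin{CD}
X\times_Y X @>>> Y\\
@V{q}VV @VV{\Delta_{Y/S}}V\\
X\times_S X @>{f\times f}>> Y\times_S Y
\end{CD}
\]
which exhibits $q$ as a base change of $\Delta_{Y/S}$; in particular $q$ is representable and separated, because Definition~\ref{defn:stacks} requires the diagonal of a Deligne-Mumford stack to be so. Since $q\circ\Delta_{X/Y} = \Delta_{X/S}$ is proper ($X$ being a separated stack), the cancellation property of proper morphisms applies: $\Delta_{X/Y}$ factors as its graph $X \to X\times_{X\times_S X}(X\times_Y X)$, which is a closed immersion because it is a base change of $\Delta_q$, followed by the projection to $X\times_Y X$, which is proper because it is a base change of $\Delta_{X/S}$. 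Equivalently, the fiber product occurring in your square is $X\times_Y I_{Y/S}$, and the identity section $X \inj X\times_Y I_{Y/S}$ is a closed immersion because $I_{Y/S}\to Y$ is separated; this is the precise content of your phrase ``a closed immersion composed with a base change of $\Delta_{X/S}$,'' but it has to be proved, and the proof is exactly where the inertia enters. Note also that the paper's own proof of separatedness takes an entirely different route: it reduces to the cancellation statement that if $g\circ f$ is separated then so is $f$, and verifies this with the valuative criterion by extending an isomorphism of objects over a valuation ring, so no manipulation of diagonals is needed there.
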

\begin{proof}
We first show that $f$ is quasi-compact. 
Let $U \xrightarrow{u} Y$ be a noetherian affie atlas and let 
$f': Z = X {\times}_Y U \to U$ be the base change. We first claim that the 
quasi-compactness of $f'$ implies the same for $f$. To prove this claim,
let $T \xrightarrow{g} Y$ be a morphism with $T \in
Aff/S$ and let $X' = X {\times}_Y T$. Consider the following commutative
diagram, where $W = U {\times}_Y T$.
\[
\xymatrix@C.5pc{
V \ar[r] \ar[ddr] & T {\times}_Y Z  \ar[rr] \ar[dd] \ar[dr] & 
& X'\ar[dr] \ar[dd] & \\
&  & Z \ar[rr] \ar[dd]_{f'} & & X \ar[dd]^{f} \\
& W \ar[rr] \ar[dr] & & T \ar[dr] & \\
& & U \ar[rr]_{u} & & Y}
\]
Since $Y$ is separated and $U$, $T$ are affine, we see that $W$ is affine.
Now the quasi-compactness of $f'$ implies that $T {\times}_Y Z$ is 
quasi-compact and hence there is an atlas $V \to T {\times}_Y Z$ which
is quasi-compact. On the other hand, as $u$ is \'etale and surjective,
we see that $T {\times}_Y Z \to X'$ is also \'etale and surjective. 
In particular, $V \to X'$ is an atlas, which shows that $X'$ is quasi-compact.
This proves the claim. 

Using the claim, we can assume that $Y$ is a noetherian affine scheme. Let
$T \to U$ be a morphism with $T \in Aff/S$ and put $X' = T {\times}_Y X$. 
Let $V \xrightarrow{v} X$ be a noetherian atlas of $X$. Then it is easy to
see that $V {\times}_Y T = W \to X'$ is an \'etale atlas of $X'$. On the
other hand, $V \to Y$ is a morphism of noetherian schemes and hence 
quasi-compact, which means that $W$ is a quasi-compact
scheme. In particular, $f$ is quasi-compact.

We now prove the separatedness of $f$. Since $X$ and $Y$ are separated
Deligne-Mumford stacks over $S$, it suffices to show that if 
$X \xrightarrow{f} Y \xrightarrow{g} Z$ are morphisms of stacks such that 
$h = g \circ f$ is separated, then $f$ is also separated. This is easily proved
using the valuative criteria of separatedness. So let $V = {\rm Spec}(V)$
be the spectrum of a valuation ring with the valuation field $K$. Let
$U = {\rm Spec}(K)$ and let $t : V \to Y$. Put $t' = g \circ t$. Let
$x_1, x_2 \in Ob(X_V)$ and let $\beta : f \circ x_1 \xrightarrow{\cong} f
\circ x_2$ and $\alpha : (x_1)_U \xrightarrow{\cong} (x_2)_U$ be given
such that $f(\alpha) = (\beta)_U$. 
Put ${\beta}' = g(\beta) : h \circ x_1 \xrightarrow{\cong} h \circ x_2$.
This implies that $h(\alpha) = ({\beta}')_U$.
Hence the separatedness of $h$ implies that
there exists $\wt{\alpha} : x_1 \xrightarrow{\cong} x_2$ extending 
$\alpha$. In particular, $f$ is separated.
\end{proof}       

Let ${\bf P}$ be a property of morphisms of schemes which is of
local nature for the \'etale topology.
Examples of such properties are a morphism being flat, smooth, \'etale,
unramified, etc.

\begin{defn}\label{defn:morphism}
A morphism $f : X \to Y$ of stacks is said to have property 
${\bf P}$ if there are \'etale atlases $U \to X$, $V \to Y$ and 
a compatible morphism $U \to V$ with the property ${\bf P}$.
\end{defn}

\begin{defn}\label{defn:morphism1}
Let ${\bf P}$ be a property of schemes which is of local nature
in the \'etale topology.  
We say that the stack $X$ has property ${\bf P}$ if there
is an (\'etale) atlas $U \to X$ such that $U$ has property ${\bf P}$.
The stack $X$ is said to have dimension $n$ if it has an atlas 
$U \to X$ of Krull dimension $n$.  
\end{defn}

A stack $X$ is called a {\it tame} stack if for any geometric point
$s : {\rm Spec}(\Omega) \to X$, the group ${\rm Aut}_{\Omega}
(s)$ has order prime to the characteristic of the field $\Omega$.
In particular, if $G$ is an algebraic group acting on a scheme $X$ of
finite type over a field $k$ such that the stabilizers of geometric
points are finite and reduced, then the quotient stack $[X/G]$
is tame if and only if the orders of stabilizer groups are prime to
the characteristic of $k$. Thus, all stacks over a field of characteristic
zero are tame.
 
The following lemma about the separated Deligne-Mumford stacks is
well-known.

\begin{lem}[{\cite[Lemma~2.2.3]{Ab-V}}]\label{lem:moduli}
Let $X$ be a noetherian and separated Deligne-Mumford stack. Then there
is an \'etale cover $\{X_{\alpha} \to X\}_{\alpha \in I}$ such that each $X_{\alpha}$
is of the form $X_{\alpha} = [U_{\alpha}/{\Gamma_{\alpha}}]$, where $U_{\alpha}$ is
an affine noetherian scheme and $\Gamma_{\alpha}$ is a finite group acting on
$U_{\alpha}$.
\end{lem}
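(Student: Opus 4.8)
The plan is to reduce to a local assertion and then to build an explicit quotient chart around each geometric point. It suffices to show that for every geometric point $x\colon\Spec(\Omega)\to X$ there is a representable \'etale morphism $[U/\Gamma]\to X$ whose image contains $x$, with $U$ affine noetherian and $\Gamma$ a finite group: the family of all such charts, as $x$ varies, is then jointly surjective (each map being open), and hence is an \'etale cover of the required form. Moreover $\Gamma$ will be forced to be the automorphism group ${\rm Aut}_\Omega(x)$, which is finite, and reduced hence \'etale over $\Omega$, because $X$ is Deligne-Mumford.

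To set up the local picture, pick an \'etale atlas of $X$, restrict it to an affine open meeting a chosen lift of $x$, and use that $X$ is noetherian; this yields an affine noetherian scheme $V$, an \'etale morphism $p\colon V\to X$, and a point $v\in V$ lying over $x$. Form the associated groupoid $R=V{\times}_X V\rightrightarrows V$. The two projections $s,t\colon R\to V$ are base changes of $p$ and so are \'etale, while $(s,t)\colon R\to V{\times}_S V$ is the base change of the diagonal $\Delta_X$ along $V{\times}_S V\to X{\times}_S X$. Since $X$ is separated, $\Delta_X$ is proper, and since $X$ is Deligne-Mumford it is unramified; being proper and unramified — in particular quasi-finite — the morphism $(s,t)$ is therefore \emph{finite}. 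Its fiber over the point $(v,v)$ is exactly $\Gamma={\rm Aut}_\Omega(x)$, a finite reduced set of $|\Gamma|$ points, each satisfying $s(\gamma)=t(\gamma)=v$.

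The crux is to turn this finite \'etale groupoid into a genuine quotient presentation. For each $\gamma\in\Gamma$, \'etaleness of $s$ at $\gamma$ yields, over a small enough \'etale neighborhood of $v$, a section of $s$ through $\gamma$, and composing it with $t$ produces a local automorphism fixing $v$. I expect the main obstacle to be the upgrade of these separate local automorphisms into an honest action of the group $\Gamma$ on a single common \'etale neighborhood $U\to V$ of $v$, under which the restricted groupoid $R\times_{V{\times}_S V}(U{\times}_S U)$ becomes isomorphic to the action groupoid $\Gamma\times U\rightrightarrows U$. The finiteness of $(s,t)$ is what guarantees there are no spurious components beyond those coming from $\Gamma$, since the fiber over $(v,v)$ is precisely $\Gamma$; the group law is obtained by a slice argument at the fixed point $v$, trivializing the groupoid over the strict henselization of $V$ at $v$ (where a finite \'etale groupoid is necessarily an action groupoid) and then spreading the resulting $\Gamma$-action out, by a standard finite-presentation limit argument, to an actual \'etale neighborhood $U$.

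Granting this slice, the morphism $[U/\Gamma]\to X$ induced by $U\to V\xrightarrow{\,p\,}X$ has groupoid identified with $R\times_{V{\times}_S V}(U{\times}_S U)$, so it is \'etale, and it is representable because the induced map ${\rm Aut}_{[U/\Gamma]}\to{\rm Aut}_X$ on automorphism groups is injective (it is the inclusion of a subgroup of $\Gamma$, an isomorphism over $v$); its image is an open substack containing $x$. Finally, to make $U$ affine we shrink: as $v$ is fixed by $\Gamma$, we replace $U$ by the $\Gamma$-invariant open $\bigcap_{\gamma\in\Gamma}\gamma U'$, where $U'\ni v$ is any affine open, which still contains $x$ and, being a finite intersection of affine opens in the separated scheme $U$, is again affine and noetherian. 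This produces the required chart and completes the reduction. One may alternatively derive the charts from the Keel--Mori theorem, which gives a coarse moduli algebraic space $\pi\colon X\to M$ with $\pi$ proper and quasi-finite and realizes $X$, \'etale-locally over $M$, as $[U/\Gamma]$; the slice computation above is precisely the local input for that route.
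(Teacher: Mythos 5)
Your strategy here is the direct slice-theorem argument (essentially LMB, Th\'eor\`eme 6.2) rather than the paper's route, which quotes Rydh's theorem to produce a coarse moduli space $q\colon X\to Y$, exhibits $q^{-1}(Y^{sh}_{\alpha})\cong[U_{\alpha}/\Gamma_{\alpha}]$ over strict henselizations of $Y$, and then spreads out. Your setup is correct: $(s,t)\colon R=V\times_X V\to V\times_S V$ is finite and unramified, its fiber over $(v,v)$ is $\Gamma={\rm Aut}_\Omega(x)$, and the henselization argument does produce a $\Gamma$-action on some \'etale neighborhood $U$ of $v$. The genuine gap is the assertion that ``the finiteness of $(s,t)$ \ldots guarantees there are no spurious components,'' i.e.\ that $\Gamma\times U\to U\times_X U$ can be made an isomorphism. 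Finiteness of $(s,t)$ is finiteness over the \emph{non-local} scheme $U\times_S U$, so connected components of $U\times_X U$ need not meet the fiber over $(v,v)$, and such components cannot be removed by shrinking $U$ around $v$. Concretely, take $X=[\A^1_k/(\Z/2)]$ with the sign action, ${\rm char}(k)\neq 2$, $V=\A^1$, and $v\neq 0$, so that $\Gamma$ is trivial. For \emph{any} \'etale neighborhood $U\to V$ of $v$, the image $W$ of $U$ in $\A^1$ is a cofinite open subset, hence contains a pair $a\neq -a$; lifting $a$ and $-a$ to $U$ and using the flip gives a point of $U\times_X U$ outside the diagonal $\Gamma\times U$. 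The same example refutes the parenthetical claim that a finite \'etale groupoid over a strictly henselian base is automatically an action groupoid: $V^{sh}\times_X V^{sh}$ contains the ``graph of the involution'' component, which is nonempty but misses the closed fiber. So your identification of the groupoid of $[U/\Gamma]$ with $U\times_X U$ --- and with it the implicit conclusion that $[U/\Gamma]\to X$ is an open immersion onto a Zariski neighborhood of $x$ --- is false, and within your construction (where $U$ factors through the atlas $V$) it cannot be rescued by any choice of $U$.

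What survives is enough for the lemma, however. The henselization-plus-spreading-out argument yields an open immersion of groupoids $\Gamma\times U\inj U\times_X U$, onto the components meeting the fiber over $(v,v)$, rather than an isomorphism. The induced morphism $[U/\Gamma]\to X$ is then representable, because the map on isotropy groups is injective ($\Gamma\times U\to U\times_X U$ being a monomorphism); it is \'etale, as this can be checked on the common atlas $U$ in the sense of Definition~\ref{defn:morphism}; and its image is an open substack containing $x$. Such charts are jointly surjective as $x$ varies, which is exactly what the lemma asks. This corrected statement is precisely LMB Th\'eor\`eme 6.2, which the paper itself invokes in the proof of Lemma~\ref{lem:section}. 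In summary: your route is a legitimate alternative to the paper's coarse-moduli-space proof and avoids the Keel--Mori input, but only after weakening your key claim from ``the chart is an open immersion'' to ``the chart is a representable \'etale map with open image.''
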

\begin{proof}(Sketch)
It follows from \cite[Theorem~6.12]{DR} that $X$ has a coarse moduli space
$q : X \to Y$ which is a noetherian and separated algebraic space.
Choose a  geometric point $y_{\alpha}$ of $Y$ and let $Y^{sh}_{\alpha}$ denote the 
spectrum of the strict henselization of $Y$ at the point $y_{\alpha}$. 
In particular, $Y^{sh}_{\alpha}$ is a noetherian affine scheme. It is then shown 
in {\sl loc. cit.} that there is a scheme $U_{\alpha}$ which is finite over 
$Y^{sh}$ and a finite group $\Gamma_{\alpha}$ acting on $U_{\alpha}$ such that 
$q^{-1}(Y^{sh}_{\alpha}) = [{U_{\alpha}}/{\Gamma_{\alpha}}]$ and $Y^{sh}_{\alpha} = 
{U_{\alpha}}/{\Gamma_{\alpha}}$. Since $U_{\alpha} \to Y^{sh}_{\alpha}$ is finite,
we see that $U_{\alpha}$ is a noetherian affine scheme. Since $Y^{sh}_{\alpha}$
is an inverse limit of affine \'etale covers, we see that there is 
an \'etale cover $\{Y_{\alpha} \to Y\}$ by noetherian affine schemes such
that for each $\alpha \in I$, one has $q^{-1}(Y_{\alpha}) = 
[U_{\alpha}/{\Gamma_{\alpha}}]$ with $U_{\alpha}$ noetherian and affine.
\end{proof}  

\subsection{Sheaves on Deligne-Mumford stacks}\label{sec:sheaves}
We recall from \cite[12.1]{LMB} that the (small) \'etale site
\'Et$(X)$ of a Deligne-Mumford stack is the category whose objects
are the representable \'etale morphisms $u : U \to X$ with $U$
a scheme (denoted by $(U, u)$) and morphisms between $(U, u)$
and $(V,v)$ are 1-morphisms of schemes $f : U \to V$ such that there is
a 2-isomorphism $\alpha : v \circ f \to u$. The Grothendieck topology on 
\'Et$(X)$ is generated by the
coverings Cov$(U,u)$ whose objects are families of morphisms
${\alpha}_i : (U_i, u_i) \to (U, u)$ such that the morphism 
\[
{\coprod}_i u_i : {\coprod}_i U_i \to U
\]
of schemes is surjective.

A sheaf (resp. presheaf) on $X$ is a sheaf (resp. presheaf) on the
\'etale site \'Et$(X)$ in the sense of \cite{Artin}.
In particular, a sheaf of ${\sO}_X$-modules (resp. quasi-coherent
sheaf) $\sF$ on $X$ is the following data.\\
$(i)$ \ For each atlas $u : U \to X$, an ${\sO}_U$-module (resp.
quasi-coherent sheaf) ${\sF}_U$ on $U$. \\
$(ii)$ \ For any morphism of atlases $f : (U, u) \to (V, v)$,
an isomorphism ${\sF}_U \xrightarrow {\cong} f^*({\sF}_V)$. 
The above isomorphisms are required to satisfy the usual cocycle
conditions. 

\begin{remk}\label{remk:topos}
We shall often consider the bigger \'etale site ${\sE}t(X)$, whose objects
are all morphisms $[X' \xrightarrow{f} X]$, where $X'$ is a Deligne-Mumford
stack and $f$ is an \'etale morphism (not necessarily representable).
The morphisms in this site are defined as in \'Et$(X)$.  Note that
every covering in the Grothendieck site ${\sE}t(X)$ has refinements
which are from \'Et$(X)$. It is then 
easy to see that the sites \'Et$(X)$ and ${\sE}t(X)$ have the same topos.
We shall denote this common \'etale topos by {\sl \'et}$(X)$.
\end{remk}
 
A coherent sheaf is a quasi-coherent sheaf $\sF$ such
that ${\sF}_U$ is coherent for each atlas $U \to X$. A vector
bundle is a coherent sheaf $\sF$ such that ${\sF}_U$ is 
locally free for each atlas $U \to X$. Let $Sh(X)$ (resp.
$Mod(X)$) (resp. $QC(X)$) denote the abelian category of sheaves
of abelian groups (resp. sheaves of ${\sO}_X$-modules)
(resp. quasi-coherent sheaves) on $X$.

For any sheaf of abelian groups $\sF$, we define the global
section functor ${\Gamma}(X, {\sF})$ as Hom$({\Z}_X, {\sF})$.
This is a left exact functor and its right derived functors are
denoted by $H^i(X, {\sF})$ for $i \ge 0$. It is easy to check that
if $\sF$ is a sheaf of ${\sO}_X$ modules, then ${\Gamma}(X, {\sF})
= {\rm Hom}_{{\sO}_X}({\sO}_X, {\sF}) = \Gamma \left(S, \pi_*(\sF)\right)$,
where $\pi : X \to S$ is the structure morphism. Moreover, $H^i(X, -)$ is same
as the derived functors of ${\rm Hom}_{{\sO}_X} ({\sO}_X, -)$ computed 
in the category $Mod(X)$. The following result was proven for the
special case of strongly representable maps in \cite[Lemma~1.6]{Krishna}.  

\begin{lem}\label{lem:shrik}
Let $X$ be a stack and let $j: Y \to X$ be an \'etale 
morphism of stacks. Then there is a functor $j_{!}: Sh(Y) \to Sh(X)$ 
which is exact and is a left adjoint to the restriction functor $j^*$. 
Furthermore, $j_{!}$ takes ${\sO}_Y$-modules to ${\sO}_X$-modules. 
In particular, $j^*$ preserves limits and injective sheaves of 
${\sO}_X$-modules.
\end{lem}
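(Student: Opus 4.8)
The plan is to realize $j_!$ as the extension-by-zero functor for the small \'etale sites, exactly as in the classical case of schemes, the only new point being that the possible non-representability of $j$ causes no difficulty. First I would construct a functor of sites $u : \text{\'Et}(Y) \to \text{\'Et}(X)$ sending an object $(V, v)$ of $\text{\'Et}(Y)$ to $(V, j \circ v)$, functorially in $(V,v)$. This is well defined: since $V$ is a scheme, the composite $V \to Y \to X$ is strongly representable by \cite[Lemme~4.2]{LMB}, and it is \'etale because $v$ and $j$ are \'etale (Definition~\ref{defn:morphism}), so $(V, j\circ v)$ is genuinely an object of $\text{\'Et}(X)$. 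Thus although $j$ itself need not be representable, every object we actually manipulate is a scheme lying over $X$, and the construction never leaves the representable world; this is precisely what lets us upgrade the strongly representable case \cite[Lemma~1.6]{Krishna} to arbitrary \'etale $j$. The restriction functor $j^* : Sh(Y) \to Sh(X)$ wanted in the statement is then nothing but $u^p$, that is $F \mapsto F \circ u$.

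Next I would verify that $u$ is simultaneously \emph{continuous} and \emph{cocontinuous} for the \'etale topologies. Continuity is immediate: $u$ preserves surjective families and commutes with the fibre products appearing in $\text{\'Et}(Y)$, since these are computed on the underlying schemes and are unaffected by post-composition with $j$. Cocontinuity is the crucial point and is where \'etaleness of $j$ enters. Given $(V,v) \in \text{\'Et}(Y)$ and a covering $\{W_i \to (V, j\circ v)\}$ in $\text{\'Et}(X)$, each $W_i$ is a scheme \'etale over $V$; hence $W_i \to V \to Y$ exhibits $W_i$ as an object of $\text{\'Et}(Y)$ over $(V,v)$, and the same family is a covering of $(V,v)$ whose image under $u$ refines (indeed equals) the given one. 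By the general theory of cocontinuous functors of sites (see \cite{Artin}), $j^* = u^p$ then admits a left adjoint $j_!$, obtained by sheafifying the presheaf-level left Kan extension of $u$, together with a right adjoint $j_*$.

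It remains to prove that $j_!$ is exact and to deduce the module and injectivity statements. For exactness I would pass to geometric points: the \'etale topos \'et$(X)$ has enough points, and for a geometric point $\bar{x} : \Spec(\Omega) \to X$ the fibre $Y \times_X \bar{x}$ is a disjoint union of geometric points because $j$ is \'etale. A direct stalk computation then identifies $(j_! F)_{\bar x}$ with $\bigoplus_{\bar y \mapsto \bar x} F_{\bar y}$; since formation of stalks and of direct sums are both exact and jointly conservative, $j_!$ is exact. The $\sO$-module statement follows from the identity $j^* \sO_X = \sO_Y$ together with the projection formula $j_!(F \otimes j^* G) \cong (j_! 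F) \otimes G$: taking $G = \sO_X$ transports the $\sO_Y = j^*\sO_X$-module structure on $F$ to an $\sO_X$-module structure on $j_! F$ and shows that the abelian-sheaf $j_!$ restricts to a left adjoint of $j^*$ on $\sO$-modules. Finally, $j^*$ preserves all limits because it is a right adjoint (to $j_!$), and it preserves injective $\sO_X$-modules because it is right adjoint to the \emph{exact} functor $j_!$. I expect the main obstacle to be exactly this stalk computation establishing the exactness of $j_!$: one must check that the \'etale-local structure of $j$ really produces an honest direct sum of the values of $F$, which is the one place where the step from strongly representable maps to arbitrary \'etale $j$ must be seen to introduce nothing new.
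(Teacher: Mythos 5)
Your proposal takes a genuinely different route from the paper's, and the purely site-theoretic part of it is fine: the functor $u : \text{\'Et}(Y) \to \text{\'Et}(X)$, $(V,v) \mapsto (V, j\circ v)$ is well defined and is indeed continuous and cocontinuous (your verification of cocontinuity is correct), so the triple $j_! \dashv j^* \dashv j_*$ exists, and your deductions of the last assertions (limits because $j^*$ is a right adjoint, injectives because $j^*$ is right adjoint to an exact functor) are the standard ones. The paper, by contrast, never leaves the explicit lifting formula ~\eqref{eqn:shrik1}: it quotes \cite[Lemma~1.6]{Krishna} for existence, exactness and adjointness, and only proves the $\sO$-module assertion, by showing $u^* \circ j_! = j'_! \circ u'^*$ for an atlas $u : U \to X$ (reducing to $X$ a scheme), then writing a sheaf on $Y$ in the form $v_!(\sG)$ for an atlas $v : V \to Y$, so that $j_!\circ v_! = (j\circ v)_!$ reduces everything to Milne's extension-by-zero for schemes \cite{Milne}. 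In particular the paper never argues through fibres or stalks.

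The genuine gap in your proof sits exactly at the step you flagged: the assertion that, $j$ being \'etale, the fibre $Y \times_X \bar{x}$ over a geometric point is a disjoint union of geometric points. That is true when $j$ is representable (an \'etale algebraic space over a separably closed field is a disjoint union of points), but it fails for the non-representable \'etale morphisms that the lemma is meant to include --- and including them is the whole point of this lemma beyond \cite[Lemma~1.6]{Krishna}, since a representable \'etale $j$ is automatically strongly representable (Proposition~\ref{prop:QCS} and \cite[Corollary~II.6.17]{Knutson}) and hence already covered there. Concretely, take $X = \Spec(\C)$ and $Y = B(\Z/2) = [\Spec(\C)/(\Z/2)]$ with $j$ the structure map: $j$ is \'etale in the sense of Definition~\ref{defn:morphism} (the atlas $\Spec(\C) \to B(\Z/2)$ composed with $j$ is the identity), but the fibre over the unique geometric point is the gerbe $B(\Z/2)$, not a sum of points. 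Here $Sh(Y)$ is the category of $\Z/2$-modules, $j^*$ is the trivial-module functor, and its left adjoint is the coinvariants functor $M \mapsto M_{\Z/2}$, which is \emph{not} exact: applying it to the inclusion of the augmentation ideal $I \inj \Z[\Z/2]$ kills injectivity, since $H_1(\Z/2,\Z) = \Z/2 \neq 0$. So it is not merely that your stalk computation is unjustified; in this generality the left adjoint of $j^*$ genuinely fails to be exact, and no stalk argument can repair it --- your proof silently requires representability of $j$ at precisely the point where the lemma claims more. (Your appeal to an unproved projection formula $j_!(F \otimes j^*G) \cong (j_!F)\otimes G$ to get the $\sO$-module structure is a second, smaller instance of the same issue: standard for \'etale maps of schemes, but exactly the kind of statement that must be re-proved rather than invoked once $j$ is not representable.) The same example shows that the non-representable case is delicate for any argument, which is why the paper's proof is forced into its atlas-by-atlas reduction; but the paper's route at least never passes through the false fibre statement on which your exactness proof rests.
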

\begin{proof} Except for the last assertion, this result is already shown
in \cite[Lemma~1.6]{Krishna}. The last assertion is also shown there for
the strongly representable maps. We just have to show that the general case 
can be reduced from the case of strongly representable maps. 
So let $(U,u) \in$ \'Et$(X)$ and let ${\sF} \in Mod(Y)$.
It is enough to show that $u^* \circ j_{!} (\sF)$ is an 
${\sO}_U$-module. Consider the Cartesian diagram
\[
\xymatrix{
V \ar[r]^{j'} \ar[d]_{u'} & U \ar[d]^{u} \\
Y \ar[r]_{j} & X.
}
\]
We now note from {\sl loc. cit.} that 
\begin{equation}\label{eqn:shrik1}
j_{!} (\sF)((U,u)) = {\underset{\{u': U \to Y | j \circ u' = u\}}\bigoplus}
{\sF}((U, u')),
\end{equation}
from which it is easy to see that $u^* \circ j_{!} (\sF)
= {j'}_{!} \circ {u'}^*(\sF)$. This reduces the problem to the
case when $X$ is a scheme. We now let $V \xrightarrow{v} Y$ 
an \'etale atlas. There is a natural map $v_{!}(v^*(\sF)) \to \sF$
and $v_{!}(v^*(\sF))$ is a sheaf of $\sO_Y$-modules since $v$ is
strongly representable.  Since $v^* \circ v_{!}$ is identity, as
follows from ~\eqref{eqn:shrik1}, we see that this map is an isomorphism
when restrricted to $V$. Since $V \to Y$ is an atlas, this map must be
an isomorphism.
Thus we can assume that $\sF = v_{!}(\sG)$, where $\sG$ is a sheaf of
$\sO_V$-modules. We have then $j_{!}(\sF) = j_{!} \circ v_{!}(\sG) =
(j \circ v)_{!}(\sG)$ and this reduces to the case when $X$ is a scheme and
$Y$ is an \'etale open subset of $X$. In this case, the definition in
~\eqref{eqn:shrik1} agrees with the definition of $j_{!}$ in 
\cite[Remark~3.18]{Milne} for schemes, which is known to preserve the sheaves 
of $\sO_X$-modules.
\end{proof}
\begin{lem}{$($\cite[Lemma~1.7]{Krishna}$)$}\label{lem:injective}
Let $X$ be a stack. Then $QC(X)$ and $Mod(X)$ are 
Grothendieck categories and hence have enough injectives
and all limits.
\end{lem}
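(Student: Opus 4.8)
The plan is to verify directly that each of $Mod(X)$ and $QC(X)$ satisfies the three axioms characterizing a Grothendieck category — that it is an abelian category which is cocomplete, in which filtered colimits are exact (the axiom AB5), and which admits a generator — and then to invoke Grothendieck's theorem that any such category has enough injectives, together with the standard fact that every Grothendieck category is automatically complete, to deduce the two stated consequences. The result is originally due to \cite{Krishna}, and I would follow the same overall division of labour.

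First I would treat $Mod(X)$, the category of modules over the structure sheaf $\sO_X$ on the site \'Et$(X)$. That this is an abelian category in which all colimits exist and filtered colimits are exact is formal for modules over any ringed site: colimits are computed by sheafifying the corresponding presheaf colimit, sheafification is exact, and section-wise filtered colimits of abelian groups are exact. The only point requiring the geometry of $X$ is the existence of a generator, and this is exactly where \lemref{lem:shrik} enters. For each object $(U,u) \in$ \'Et$(X)$ the exact left adjoint $u_!$ produces $u_!\sO_U \in Mod(X)$, and by adjunction
\[
\Hom_{\sO_X}\bigl(u_!\sO_U, \sF\bigr) \;=\; \Hom_{\sO_U}\bigl(\sO_U, u^*\sF\bigr) \;=\; \Gamma\bigl(U, \sF_U\bigr).
\]
Since $X$ is noetherian, the small \'etale site \'Et$(X)$ is essentially small, so a set of representatives $\{(U_i,u_i)\}$ exists, and the adjunction formula shows that $G = \bigoplus_i (u_i)_!\sO_{U_i}$ is a generator: a morphism in $Mod(X)$ vanishes as soon as it induces the zero map on all sections $\Gamma(U_i,-)$. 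Hence $Mod(X)$ is a Grothendieck category.

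Next I would pass to $QC(X)$, regarded as a full subcategory of $Mod(X)$. One first checks it is an abelian subcategory on which the inclusion is exact: for a morphism $\phi$ of quasi-coherent sheaves and any atlas $u : U \to X$, the functor $u^*$ is exact (an atlas is \'etale, hence flat), so the kernel and cokernel of $\phi$ formed in $Mod(X)$ restrict on $U$ to those of $\phi_U$, which are quasi-coherent; since quasi-coherence is tested on an atlas, $QC(X)$ is closed under kernels and cokernels, and the same local test shows it is closed under arbitrary direct sums and, since $X$ is noetherian and hence quasi-compact and quasi-separated, under filtered colimits. Thus $QC(X)$ is cocomplete, these colimits agree with those in $Mod(X)$, and AB5 is inherited. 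The remaining and, I expect, genuinely delicate step is the construction of a generator, since the modules $u_!\sO_U$ are not quasi-coherent. Here I would exploit the noetherian hypothesis to show that every quasi-coherent sheaf on $X$ is the filtered union of its coherent subsheaves: \'etale-locally this is the classical fact on a noetherian affine scheme, and \lemref{lem:moduli} reduces the general case to charts $[U_\alpha/\Gamma_\alpha]$ with $U_\alpha$ affine noetherian and $\Gamma_\alpha$ finite, where one replaces a coherent subsheaf by the finite sum of its $\Gamma_\alpha$-translates to obtain a coherent equivariant subsheaf, after which descent yields the statement on $X$. As $X$ is noetherian, the category $Coh(X)$ of coherent sheaves is essentially small, so the direct sum of one representative of each isomorphism class is a single quasi-coherent generator.

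Finally, with both categories shown to be Grothendieck categories, the existence of enough injectives is Grothendieck's theorem, and the existence of all limits is the completeness of any Grothendieck category. I would record one subtlety: for $QC(X)$ the products are \emph{not} in general the products computed in $Mod(X)$, since a product of quasi-coherent sheaves need not be quasi-coherent, so the assertion that $QC(X)$ has all limits genuinely relies on the abstract completeness theorem rather than on a construction inside $Mod(X)$. The main obstacle in the whole argument is the generator for $QC(X)$; everything else is either formal module-over-a-ringed-site theory or a direct application of \lemref{lem:shrik}.
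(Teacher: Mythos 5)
Your proposal is correct and takes essentially the same route as the paper: both verify the Grothendieck axioms with the generator as the only nontrivial point, producing it for $Mod(X)$ from $!$-extensions of structure sheaves via \lemref{lem:shrik} and for $QC(X)$ from the exhaustion of quasi-coherent sheaves by coherent subsheaves, and then invoke the standard theorems for enough injectives and completeness. The only deviations are cosmetic: the paper's generating family for $Mod(X)$ is $\{(u\circ j)_!(\sO_V)\}$ for Zariski opens $V \xrightarrow{j} U$ of a single atlas $u : U \to X$ rather than all objects of \'Et$(X)$, and the paper simply cites \cite[Proposition~15.4]{LMB} for the coherent exhaustion instead of re-deriving it from \lemref{lem:moduli}.
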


\begin{proof} Since there is a minor error in the proof of
\cite[Lemma~1.7]{Krishna}, we reproduce the complete proof here.
Recall that an abelian category is a Grothendieck
category if it has inductive limits, a family of generators
and if the filtered inductive limits are exact. The first
and the last conditions are easily verified for the given
categories. So we only need to show that both the categories
have families of generators. Since every quasi-coherent sheaf on $X$ is
a direct limit of a family of its coherent subsheaves ({\sl cf.} 
\cite[Proposition~15.4]{LMB}), the family of the isomorphism classes
of coherent sheaves clearly forms a generating family for the
category $QC(X)$ ({\sl cf.} \cite[B.3]{TT}). 

To give a generating family for $Mod(X)$, let $u : U \to X$ be
an atlas. We claim that $\{(u \circ j)_{!} ({\sO}_V) |
V \xrightarrow {j} U \ {\rm open} \}$ is the required family.
As before, let ${\sF} \inj {\sG}$ be an inclusion in $Mod(X)$
such that ${\sG}/{\sF}$ is not zero. Then there is an open subset
$V \xrightarrow {j} U$ such that this condition remains true
when we restrict these sheaves to $V$. Now we can further
restrict $V$ to ensure that there is map ${\sO}_V \to
(u \circ j)^*(\sG)$ such that the composite ${\sO}_V \to
(u \circ j)^*({\sG}/{\sF})$ is not zero. Now we use 
Lemma~\ref{lem:shrik} to get a map 
$(u \circ j)_{!} ({\sO}_V) \to {\sG}$ such that the composite 
$(u \circ j)_{!} ({\sO}_V) \to {\sG}/{\sF}$ is not zero.
This proves the claim. 

Finally, it is known that a Grothendieck category has enough
injectives and the Gabriel-Popescu embedding theorem implies
that such categories have all limits.
\end{proof} 

\begin{remk}\label{remk:sheaves}
It is well known (and easy to check) that the category $Sh(X)$ is also a 
Grothendieck category ({\sl cf.} \cite{Artin}, Theorem~1.6).
\end{remk}

\section{Perfect complexes on stacks}\label{section:Perfect}
Let $X$ be a stack. Let $C(qc/X)$ and $C_{qc}(X)$ respectively denote
the categories of (possibly unbounded) cochain complexes of quasi-coherent 
sheaves and cochain complexes of ${\sO}_X$-modules with quasi-coherent 
cohomology. Let $D(qc/X)$ and $D_{qc}(X)$ denote the corresponding derived
categories. Note that $D_{qc}(X)$ is a full subcategory of
the derived category $D(X)$ of all ${\sO}_X$-modules. Let 
$C(X)$ denote the category of unbounded cochain complexes
of ${\sO}_X$-modules. If $K$ is a complex of ${\sO}_X$-modules,
we denote its cohomology sheaves by ${\sH}^i(K)$. Recall that a stack $X$ is
said to have the {\sl resolution property} if every coherent sheaf on $X$
is a quotient of a locally free sheaf. It follows from 
the results of Thomason \cite{Thomason1} that the quotient stack
$[X/G]$ for a linear action of an algebraic group $G$ on a quasi-projective
variety $X$ over a field has the resolution property. In particular,
this property is satisfied for practically all geometric stacks.

For a Grothendieck category $\sA$, let $C(\sA)$ denote the abelian
category of all (possibly unbounded) complexes of objects in $\sA$ and
let $D(\sA)$ denote its derived category.
Spaltenstein \cite{Spalten} defined a complex $I$ over
$\sA$ to be $K$-injective if for every acyclic complex $J$, the
complex of abelian groups ${\rm Hom}^{\bullet} (J, I)$ is acyclic.
This is equivalent to saying that in the homotopy category, there
are no nonzero morphisms from an acyclic complex to $I$. Serpe
\cite{Serpe} has shown that every unbounded complex over $\sA$ 
has a $K$-injective resolution. On the other hand, since $\sA$
has enough injectives, a complex over $\sA$ also has
a Cartan-Eilenberg resolution ({\it cf.} \cite[Appendix~A]{Keller}).
It is known \cite{Weibel} that a Cartan-Eilenberg 
resolution of an unbounded complex over $\sA$ need not give a
$K$-injective resolution. In other words, the Cartan-Eilenberg
hypercohomology may not coincide with the derived functor cohomology
for complexes in a Grothendieck category. The following result was
proven in \cite[Proposition~2.2, Corollary~2.6]{Krishna} whose scheme version 
was shown by Keller in \cite[Apendix~A]{Keller}.

\begin{thm}\label{thm:CER}
Let $X$ be a stack of dimension $n$ and let $K \in C_{qc}(X)$. Let
$K \xrightarrow {\epsilon} I^{\bullet, \bullet}$ be a
Cartan-Eilenberg resolution of $K$ in $C(X)$. Then $K \xrightarrow
{\epsilon} {\widehat {\rm Tot}} I$ is a $K$-injective resolution
of $K$, where ${\widehat {\rm Tot}} I$ is the product total complex of
$I^{\bullet, \bullet}$. If $X$ is a tame stack with the resolution property,
then every Cartan-Eilenberg resolution of a complex $K \in C(qc/X)$
is a $K$-injective resolution. 
\end{thm}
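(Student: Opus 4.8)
The plan is to verify the two properties that together make $\epsilon \colon K \to \wh{\Tot} I$ a $K$-injective resolution: first that $\wh{\Tot} I$ is a $K$-injective complex, and second that $\epsilon$ is a quasi-isomorphism. I would begin by recording the defining features of a Cartan--Eilenberg resolution $I^{\bullet,\bullet}$ (supported in the region $q \ge 0$): each column $K^p \to I^{p,\bullet}$ is an injective resolution in $Mod(X)$, and the induced resolutions of the horizontal cocycles, coboundaries and cohomology are again injective resolutions. Since $Mod(X)$ is a Grothendieck category with all products (Lemma~\ref{lem:injective}) and arbitrary products of injective $\sO_X$-modules are injective, each term $(\wh{\Tot} I)^n = \prod_{p+q=n,\, q\ge 0} I^{p,q}$ is an injective $\sO_X$-module, so $\wh{\Tot} I$ is at least a complex of injectives.

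For $K$-injectivity I would argue through the tower of brutal truncations in the column direction. For each integer $a$ let $\sigma_{\ge a} I$ be the sub-double-complex supported in columns $p \ge a$; then $\wh{\Tot}(\sigma_{\ge a} I)$ is bounded below by $a$ and consists of injectives, hence is $K$-injective by the standard fact for bounded-below complexes of injectives. The canonical maps exhibit $\wh{\Tot} I = \lim_a \wh{\Tot}(\sigma_{\ge a} I)$, and the transition maps are degreewise split surjections (projections off a product). Because $K$-injective complexes are closed under products and under the fibre of a map between $K$-injectives, the homotopy limit of this tower is $K$-injective; the split surjectivity forces the relevant ${\lim}^1$ to vanish at the level of terms, so this homotopy limit agrees with $\lim_a \wh{\Tot}(\sigma_{\ge a} I) = \wh{\Tot} I$. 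This yields the $K$-injectivity.

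To see that $\epsilon$ is a quasi-isomorphism I would compute the cohomology sheaves $\sH^i(\wh{\Tot} I)$ using the same tower. Each $\sigma_{\ge a} K$ is bounded below, so the column spectral sequence of its Cartan--Eilenberg resolution converges and identifies $\sH^i\big(\wh{\Tot}(\sigma_{\ge a} I)\big)$ with $\sH^i(\sigma_{\ge a} K)$. Passing to the limit over $a$ gives a Milnor sequence relating $\sH^i(\wh{\Tot} I)$ to $\lim_a \sH^i(\sigma_{\ge a} K)$ and ${\lim_a}^{1} \sH^{i-1}(\sigma_{\ge a} K)$; since for fixed $i$ the system $\{\sH^i(\sigma_{\ge a} K)\}_a$ is eventually constant equal to $\sH^i(K)$, the $\lim^1$ term vanishes and $\sH^i(\wh{\Tot} I) = \sH^i(K)$, compatibly with $\epsilon$. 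The delicate point --- and the place where the hypothesis $\dim X = n$ does its work --- is the control of the unbounded product in the column direction: finite dimension, through Grothendieck vanishing $H^i(X,-)=0$ for $i>n$, is what guarantees that the spectral sequence for the full (non-truncated) double complex converges and that the product total complex introduces no spurious cohomology in the limit. I expect this convergence and $\lim^1$ bookkeeping to be the main obstacle; the rest is formal.

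For the second assertion I would reduce to the first. By Lemma~\ref{lem:moduli} there is an \'etale cover of $X$ by stacks of the form $[U_{\alpha}/{\Gamma_{\alpha}}]$ with $U_{\alpha}$ affine noetherian and $\Gamma_{\alpha}$ a finite group; each such chart has finite dimension, and tameness makes $|\Gamma_{\alpha}|$ invertible, so the cohomology of $[U_{\alpha}/{\Gamma_{\alpha}}]$ is computed by the $\Gamma_{\alpha}$-invariants of the cohomology of $U_{\alpha}$ and has finite cohomological dimension. The resolution property lets one run the argument inside $C(qc/X)$. Since restriction along an \'etale morphism preserves injectives and commutes with limits (Lemma~\ref{lem:shrik}), both $K$-injectivity and the quasi-isomorphism property may be checked on this cover; applying the finite-dimensional case on each chart and descending then shows that every Cartan--Eilenberg resolution of $K \in C(qc/X)$ is a $K$-injective resolution.
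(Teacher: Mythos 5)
Note first that the paper does not prove this theorem: it is quoted from \cite[Proposition~2.2, Corollary~2.6]{Krishna}, the scheme case being Keller's \cite{Keller}. So your argument has to stand on its own, and it has gaps at both of its crucial points. The first is that your tower does not exist. Since the horizontal differential of $I^{\bullet,\bullet}$ increases $p$, the columns $p\ge a$ form a sub-double-complex, so $\wh{\Tot}(\sigma_{\ge a}I)$ is a \emph{subcomplex} of $\wh{\Tot}I$, and ``projection off the factors with $p<a$'' is not a chain map: it loses the term $d_h(x^{a-1,q})$, which lands in column $a$. There is therefore no inverse system with these projections as transitions; what is true is that the increasing union of these subcomplexes is the \emph{direct-sum} total complex, not $\wh{\Tot}I$. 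The repair is to truncate by the good truncations $\tau^{\ge a}$ taken row-wise: these are quotient double complexes which are Cartan--Eilenberg resolutions of $\tau^{\ge a}K$, bounded below with injective terms (horizontal boundaries in a CE resolution are injective, hence split off), the transition maps are genuinely degreewise split surjective chain maps, and $\wh{\Tot}I=\varprojlim_a\wh{\Tot}\left(\tau^{\ge a}I\right)$. This is exactly the device (the complexes $\sI^{[\ge n]}$) used later in this paper in the proofs of Proposition~\ref{prop:EXC1} and Lemma~\ref{lem:EXC2}, and with it your $K$-injectivity argument does go through (and needs no dimension hypothesis).

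The serious gap is the quasi-isomorphism step. The Milnor sequence you invoke is not available in $Mod(X)$: infinite products in a sheaf category are not exact, so $\sH^i$ of a product of complexes is not the product of the $\sH^i$'s, and the derivation of the sequence $0\to{\varprojlim}^1\,\sH^{i-1}\to\sH^i(\varprojlim)\to\varprojlim\,\sH^i\to0$ collapses; eventual constancy of the systems $\sH^i(\tau^{\ge a}K)$ therefore proves nothing. The real obstruction consists of the higher derived functors of product/inverse limit on $Mod(X)$, which can be nonzero in degrees $1,\dots,n$ even for degreewise split surjective towers. Bounding their contribution --- via Grothendieck vanishing on the $n$-dimensional stack, applied to sections over the objects of the \'etale site so that the hypercohomology spectral sequence of the unbounded double complex converges --- is precisely where $\dim X=n$ enters, and it is the entire content of the theorem: as the paper notes just before the statement, by \cite{Weibel} the map $K\to\wh{\Tot}I$ is in general \emph{not} a quasi-isomorphism without such a hypothesis. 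You correctly identify this as the delicate point but then explicitly defer it (``the rest is formal''), so the heart of the proof is missing. Finally, your reduction of the second assertion is also unjustified: Lemma~\ref{lem:shrik} together with Lemma~\ref{lem:K-injAD} shows that \'etale restriction \emph{preserves} $K$-injective complexes, but nothing allows you to \emph{detect} $K$-injectivity on an \'etale cover --- the adjunction runs the wrong way, $\Hom_{Y}(u^*A,u^*I)\cong\Hom_X(A,u_*u^*I)$ rather than $\Hom_X(A,I)$ --- and quasi-coherent injectives differ from injective $\sO_X$-modules. The role that tameness and the resolution property actually play (cf.\ the equivalence $D(qc/X)\simeq D_{qc}(X)$ of \cite[Lemma~3.5]{Krishna} invoked in the proof of Theorem~\ref{thm:Perf-comp}) is not engaged by your sketch.
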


Let $X$ be a stack and let $E$ be a cochain 
complex of ${\sO}_X$-modules with quasi-coherent cohomology. 
\begin{defn}[\cite{Joshua}]\label{defn:pseudocoherent}
We say that $E$ is {\sl pseudo-coherent} if all cohomology sheaves
$H^i(E)$ are coherent, and $E$ is cohomologically bounded above, i.e.,
$H^i(E) = 0$ for $i \gg 0$.
\end{defn}

\begin{defn}\label{defn:perfect}
We say that $E$ is {\sl strictly perfect} if it is a bounded complex of
vector bundles on $X$. We say that $E$ is {\sl perfect} if there is an
atlas $U \xrightarrow {u} X$ such that $u^*(E)$ is isomorphic to
a strictly perfect complex in $D(U)$.
\end{defn}

The following properties of perfect complexes 
were proven in \cite[Proposition~4.3]{Krishna}. We also refer the 
reader to \cite[Proposition~2.9]{Joshua2}
for some other properties of perfect complexes on more general stacks.
\begin{prop}\label{prop:properties}
Let $X$ be as above and let $E \in C_{qc}(X)$. \\
$(i)$ \ If $E$ is perfect, then it is pseudo-coherent and cohomologically 
bounded. \\
$(ii)$ \ $E$ is perfect if and only if there is an atlas $V \xrightarrow
{v} X$ and a strictly perfect complex $F$ on $V$ with a morphism
$F \to v^*(E)$ which is a quasi-isomorphism. \\
$(iii)$ \ If $X$ is a scheme, then $E$ is perfect if and only if
it is perfect in the sense of \cite{TT}. \\
$(iv)$ \ If $E \cong F$ in $D_{qc}(X)$, then $E$ is perfect if and 
only if $F$ is so. \\
$(v)$ \ If $E = E_1 \oplus E_2$, then $E$ is perfect if and only if
$E_1$ and $E_2$ are so.
\end{prop}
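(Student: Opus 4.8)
The plan is to reduce every assertion to the corresponding statement for schemes and then invoke the \'etale descent of perfectness established by Thomason and Trobaugh \cite{TT}. The technical heart of the argument is an \emph{atlas-independence} lemma: a complex $E \in C_{qc}(X)$ is perfect in the sense of \defref{defn:perfect} if and only if, for an \emph{arbitrary} atlas $u : U \to X$, the pullback $u^*(E)$ is perfect on the scheme $U$ in the sense of \cite{TT}. To prove this, suppose $u_0 : U_0 \to X$ is an atlas witnessing perfectness and let $v : V \to X$ be any other atlas. Since every morphism from a scheme to a stack is strongly representable, the fiber product $W = U_0 \times_X V$ is a scheme, and the projections $p : W \to U_0$ and $q : W \to V$ are \'etale and surjective. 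As $p$ is flat, the pullback $p^*\bigl(u_0^*(E)\bigr)$ is perfect on $W$; but this complex is canonically isomorphic to $q^*\bigl(v^*(E)\bigr)$, so by the \'etale descent of perfectness along the faithfully flat map $q$ we conclude that $v^*(E)$ is perfect on $V$. The one subtlety is matching the global requirement of \defref{defn:perfect} (isomorphic in $D(U)$ to a strictly perfect complex) with the \'etale-local notion of \cite{TT}; this is resolved by passing to an affine refinement of the atlas, over which a perfect complex is automatically quasi-isomorphic to a globally strictly perfect complex.

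Granting this lemma, parts $(i)$, $(iii)$ and $(iv)$ are immediate. For $(iii)$, taking $X$ itself as an atlas identifies \defref{defn:perfect} with perfectness in the sense of \cite{TT}. For $(iv)$, the \'etale map $u$ is flat, so $u^*$ is exact and preserves the isomorphism $E \cong F$ in $D_{qc}(X)$; hence $u^*(E) \cong u^*(F)$ in $D(U)$, and one pullback is strictly perfect precisely when the other is. For $(i)$, on an atlas $u^*(E)$ is a bounded complex of vector bundles, so it is cohomologically bounded with coherent cohomology; since $u$ is faithfully flat and $u^*$ exact, $u^*\bigl(H^i(E)\bigr) = H^i\bigl(u^*(E)\bigr)$, and both the vanishing for $|i| \gg 0$ and the coherence of the $H^i(E)$ descend from $U$ to $X$.

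For $(ii)$, the ``if'' direction is trivial: a quasi-isomorphism $F \to v^*(E)$ with $F$ strictly perfect exhibits $v^*(E)$ as isomorphic to a strictly perfect complex in $D(V)$. For the converse, use the atlas-independence lemma to choose the witnessing atlas $u : U \to X$ to be affine. Over an affine scheme the vector bundles are finite projective modules, hence projective objects, so a strictly perfect complex $P$ is homotopically projective and morphisms out of it in $D(U)$ are computed in the homotopy category. The derived isomorphism $P \cong u^*(E)$ therefore lifts to an actual morphism of complexes $P \to u^*(E)$, which is then a quasi-isomorphism; taking $V = U$ and $F = P$ gives the claim.

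Finally, $(v)$ follows by choosing, via the atlas-independence lemma, a single atlas $u : U \to X$ and working with $u^*(E) = u^*(E_1) \oplus u^*(E_2)$ on the scheme $U$. On schemes the perfect complexes form a thick, idempotent-complete triangulated subcategory \cite{TT}, so $u^*(E)$ is perfect if and only if both summands are; descending this equivalence back to $X$ proves both directions. The main obstacle throughout is the atlas-independence lemma: one must carefully handle the fiber product of two atlases (using strong representability to ensure it is a scheme, and base change to see that the projections remain \'etale and surjective) and correctly reconcile the global strict-perfectness of \defref{defn:perfect} with the \'etale-local perfectness of \cite{TT}.
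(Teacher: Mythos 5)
The paper itself offers no proof of this proposition --- it quotes it from \cite[Proposition~4.3]{Krishna} --- so your proposal can only be judged on its own merits. Your global strategy, an atlas-independence lemma that reduces everything to Thomason--Trobaugh perfection on schemes, is sound: the fiber product of two atlases is a scheme by strong representability, the projections are \'etale and surjective, and TT-perfection passes back and forth along them. This correctly disposes of parts $(i)$, $(iii)$, $(iv)$ and $(v)$, granted two standard facts whose attribution is slightly off: descent of perfection along an \'etale surjection of noetherian schemes is not actually proved in \cite{TT} (it is an SGA~6-type statement, true and provable because pseudo-coherence and finite Tor-amplitude are both fppf-local), and the passage from TT-perfection to a global derived-category isomorphism with a strictly perfect complex over an affine atlas is \cite[2.4.3]{TT}.

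Part $(ii)$, however, has a genuine gap. In this paper $D(U)$ is the derived category of \emph{all} $\sO_U$-modules, and $u^*(E)$ is merely a complex of $\sO_U$-modules with quasi-coherent cohomology; it need not be a complex of quasi-coherent sheaves. Vector bundles on an affine scheme $U = \Spec(A)$ are projective objects of $QC(U) \cong A\text{-Mod}$, but they are \emph{not} projective objects of $Mod(U)$: already $\sO_U$ fails, since $\Gamma(U,-)$ is not exact on non-quasi-coherent $\sO_U$-modules. Consequently a strictly perfect complex $P$ is not $K$-projective in the category of complexes of $\sO_U$-modules, morphisms out of $P$ in $D(U)$ are \emph{not} computed by homotopy classes of chain maps when the target is a complex like $u^*(E)$, and the derived isomorphism $P \cong u^*(E)$ yields only a zig-zag $P \leftarrow Q \to u^*(E)$, not the honest quasi-isomorphism that $(ii)$ demands. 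Straightening such a zig-zag into a genuine map landing in $u^*(E)$ is the real content of $(ii)$: one must construct a map from a bounded-above complex of vector bundles into $u^*(E)$ degree by degree, and the lifting problems encountered there are obstructed by classes in $H^1(U,-)$ of the cocycle and boundary subsheaves of $u^*(E)$, which are in general not quasi-coherent, so these groups need not vanish even on affine $U$. This is precisely why the statement quantifies over atlases (``there is an atlas $V$''): one expects to kill such obstructions only after refining the \'etale cover, a step entirely absent from your argument, which instead asserts that the original affine atlas already works.
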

Our aim in this and the next section is to characterize the perfect 
complexes on stacks in terms of the full subcategory of {\sl compact} objects 
in certain triangulated category. We begin with a brief recall of the relevant 
notions. Recall 
that for a triangulated category $\sC$ admitting arbitrary direct sums, an 
object $E$ in $\sC$ is {\sl compact} if ${\rm Hom}_{\sC}(E, -)$ commutes with 
direct sums. Let $\sC^c$ be the full subcategory of $\sC$ consisting of compact
objects. Then $\sC^c$ is a triangulated category and $\sC$ is called
{\sl compactly generated} if $\sC$ is generated by $\sC^c$. If 
$\sE = (A_{\lambda})_{\lambda \in I}$ is a set of objects in $\sC$, then we say that
$\sE$ {\sl classically generates} $\sC$ if the smallest thick triangulated
subcategory of $\sC$ containing $\sE$ is equal to $\sC$ itself. We say that
$\sC$ is finitely generated if it is classically generated by one object. 
We shall need the following results in the next section.

\begin{lem}\label{lem:K-injAD}
Let $F : \sA \to \sB$ be a functor between Grothendieck categories which
has an exact left adjoint. Then $F : C(\sA) \to C(\sB)$ preserves the 
$K$-injective complexes.
\end{lem}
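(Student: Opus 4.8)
The plan is to apply the defining property of $K$-injectivity directly, using only the two relevant features of the adjoint pair. Write $G : \sB \to \sA$ for the exact left adjoint of $F$, so that there is a natural isomorphism $\Hom_\sB(B, F(A)) \cong \Hom_\sA(G(B), A)$ in $A \in \sA$ and $B \in \sB$. Let $I \in C(\sA)$ be $K$-injective; the goal is to show that $F(I) \in C(\sB)$ is $K$-injective, i.e. that $\Hom^{\bullet}_{C(\sB)}(J, F(I))$ is acyclic for every acyclic complex $J \in C(\sB)$.

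The first step is to record that $G$, being exact, preserves acyclicity: applying $G$ termwise to $J$ produces a complex $G(J) \in C(\sA)$ with $\sH^i(G(J)) \cong G(\sH^i(J)) = 0$, so $G(J)$ is acyclic. The second step is to promote the adjunction to the level of $\Hom$-complexes. In degree $n$ one has $\Hom^{\bullet}_{C(\sB)}(J, F(I))^n = \prod_p \Hom_\sB(J^p, F(I)^{p+n})$, and since $F$ and $G$ act termwise, the adjunction gives $\Hom_\sB(J^p, F(I)^{p+n}) \cong \Hom_\sA(G(J)^p, I^{p+n})$. Assembling these over $p$ yields an isomorphism of graded abelian groups $\Hom^{\bullet}_{C(\sB)}(J, F(I)) \cong \Hom^{\bullet}_{C(\sA)}(G(J), I)$.

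The one point requiring care is that this isomorphism is compatible with the differentials of the two $\Hom$-complexes, and this is where I expect the only real bookkeeping to occur, though it presents no conceptual obstacle. The $\Hom$-complex differential sends $f$ to $d_{F(I)} \circ f - (-1)^n f \circ d_J$, while $d_{F(I)} = F(d_I)$ and $d_{G(J)} = G(d_J)$. Naturality of the adjunction in the first variable gives $\phi(f \circ d_J) = \phi(f) \circ G(d_J)$, and naturality in the second variable gives $\phi(F(d_I) \circ f) = d_I \circ \phi(f)$; together these show that $\phi$ intertwines the two differentials, so the isomorphism of step two is an isomorphism of complexes.

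Finally I combine the two steps. Since $G(J)$ is acyclic by step one and $I$ is $K$-injective, the complex $\Hom^{\bullet}_{C(\sA)}(G(J), I)$ is acyclic by definition; transporting this across the isomorphism of step two shows that $\Hom^{\bullet}_{C(\sB)}(J, F(I))$ is acyclic. As $J$ was an arbitrary acyclic complex in $C(\sB)$, this proves that $F(I)$ is $K$-injective. The Grothendieck hypothesis on $\sA$ and $\sB$ is not needed for the argument itself; it serves only to guarantee the existence of the $K$-injective resolutions to which the lemma will later be applied.
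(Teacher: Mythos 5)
Your proof is correct and follows exactly the paper's argument: the exact left adjoint $G$ preserves acyclicity, the adjunction induces an isomorphism of $\Hom$-complexes $\Hom^{\bullet}_{\sB}(J, F(I)) \cong \Hom^{\bullet}_{\sA}(G(J), I)$, and the latter is acyclic by $K$-injectivity of $I$. The only difference is that you spell out the naturality bookkeeping showing the adjunction isomorphism is compatible with the $\Hom$-complex differentials, which the paper compresses into the word ``canonically.''
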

\begin{proof} Let $G$ be the exact left adjoint of $F$.
Let $A$ be a $K$-injective complex in $C(\sA)$ and let $B$
be an acyclic complex in $C(\sB)$. Then the exactness of $G$ implies that
$G(B)$ is acyclic. Now the adjointness of the pair $(G, F)$ implies that
the complex ${\rm Hom}^{\bullet}_{\sB}\left(B, F(A)\right)$ is canonically
isomorphic to the complex ${\rm Hom}^{\bullet}_{\sA}\left(G(B), A\right)$,
and this latter complex is acyclic because $G(B)$ is acyclic and $A$ is
$K$-injective.
\end{proof}  

\begin{cor}\label{cor:K-injAD*}
Consider the Cartesian diagram of stacks
\[
\xymatrix@C.6pc{
W \ar[r]^{t} \ar[d]_{g}  & Y \ar[d]^{f} \\
Z \ar[r]_{s} & X,}
\]
where $s$ is \'etale. For any $A \in D(Y)$, the natural
map $s^*Rf_*(A) \to Rg_*t^*(A)$ is an isomorphism in $D(Z)$.
\end{cor}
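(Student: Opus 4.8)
The plan is to reduce the derived base change isomorphism to the corresponding statement for the underived functors applied to a single sheaf, where it is forced by the Cartesian property of the square; the whole reduction rests on the fact that both $s^*$ and $t^*$ are exact and carry $K$-injective complexes to $K$-injective complexes, so that they commute with the formation of the derived direct images.

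First I would record the relevant exactness and $K$-injectivity. Since $s$ is \'etale, \lemref{lem:shrik} supplies an exact left adjoint $s_{!}$ of $s^*$; in particular $s^*$ is exact, and by \lemref{lem:K-injAD} it preserves $K$-injective complexes of $\sO_X$-modules. As the square is Cartesian and \'etaleness is stable under base change, the morphism $t$ is again \'etale, so the same reasoning shows that $t^*$ is exact and preserves $K$-injectives. (Recall that $K$-injective resolutions are available here because $Mod(Y)$ is a Grothendieck category by \lemref{lem:injective}.)

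Next I would run the resolution computation. Choosing a $K$-injective resolution $A \to I$ in $C(Y)$, we have $Rf_*(A) = f_*(I)$, and exactness of $s^*$ gives $s^* Rf_*(A) = s^* f_*(I)$. On the other side, exactness of $t^*$ makes $t^*(A) \to t^*(I)$ a quasi-isomorphism while $t^*(I)$ is $K$-injective by the previous step, so $t^*(A) \to t^*(I)$ is a $K$-injective resolution and $Rg_* t^*(A) = g_* t^*(I)$. Under these identifications the natural base change morphism $s^* Rf_*(A) \to Rg_* t^*(A)$ is realized by the canonical morphism of complexes $s^* f_*(I) \to g_* t^*(I)$ coming from adjunction. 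Being a morphism of complexes, it is an isomorphism as soon as it is one in each degree, so it suffices to prove that for every $\sO_Y$-module $\sF$ the canonical morphism $s^* f_* \sF \to g_* t^* \sF$ is an isomorphism of sheaves on $Z$.

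Finally I would check this last statement directly on the small \'etale site, where it is essentially tautological. For $(U,u) \in$ \'Et$(Z)$ the composite $U \to Z \xrightarrow{s} X$ is \'etale, and since the pullback of a representable sheaf along an \'etale map is computed by the fibre product, the adjunction formula for direct image gives $(s^* f_* \sF)(U) = (f_* \sF)(U \to X) = \sF(U \times_X Y \to Y)$, while $(g_* t^* \sF)(U) = (t^* \sF)(U \times_Z W \to W) = \sF(U \times_Z W \to Y)$. The Cartesian identity $U \times_Z W = U \times_Z (Z \times_X Y) = U \times_X Y$, compatible with the projections to $Y$, identifies the two right-hand sides and carries the base change morphism to the identity. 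The genuine content is therefore concentrated in the first two steps, and the hard part will be the bookkeeping that lets one pass between the underived and derived statements for possibly unbounded complexes: this is exactly what makes it necessary to know that $s^*$ and $t^*$ preserve $K$-injectives, and one must also confirm that the base change transformation defined by adjunction is the one realized on $K$-injective representatives, so that the reduction in the third step is legitimate.
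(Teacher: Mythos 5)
Your proof is correct and follows essentially the same route as the paper's: both use Lemmas~\ref{lem:shrik}, \ref{lem:injective} and \ref{lem:K-injAD} to see that $s^*$ and $t^*$ are exact and preserve $K$-injective complexes, reduce to a $K$-injective representative of $A$, and thereby reduce the derived statement to the underived base change isomorphism $s^*f_*\sF \to g_*t^*\sF$ for a single sheaf. The only difference is one of exposition: the paper declares this last sheaf-level isomorphism ``immediate from the definitions'' of pushforward and pullback on the \'etale topoi, whereas you verify it explicitly on objects of \'Et$(Z)$ via the identification $U\times_Z W = U\times_X Y$, which is a welcome amplification rather than a divergence.
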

\begin{proof} Since $s$ is \'etale, we see that $t$ is also \'etale and hence
$s^*$ and $t^*$ are exact functors on $Mod(X)$ and $Mod(Y)$ respectively.
It follows from Lemmas~\ref{lem:shrik}, ~\ref{lem:injective} and 
~\ref{lem:K-injAD} that $s^*$ and $t^*$ preserve $K$-injective complexes. 
To prove the corollary, we can assume that $A$ is $K$-injective.
Then we have $s^*Rf_*(A) = s^*f_*(A)$ and $Rg_* t^*(A) = g_*t^*(A)$.
Thus we only need to show that for a sheaf $\sF$ of $\sO_Y$-modules, the
natural map $s^*f_*(\sF) \to g_*t^*(\sF)$ is an isomorphism.
But this is immediate from the definition of the push-forward and the
pull-back maps on the \'etale topoi of the underlying stacks.
\end{proof}
\section{characterization of perfect complexes on stacks}
\label{section:PFC}
In this section, we prove our main result on the characterization of the 
category of perfect complexes on a separated Deligne-Mumford stack $X$ in 
terms of the full subcategory of the compact objects in $D_{qc}(X)$. We begin 
with the following result. 
\begin{prop}\label{prop:direct-image}
Let $f : Y \to X$ be a strongly representable morphism in ${\sD}{\sM}_S$. Then 
$Rf_*$ maps $D_{qc}(Y)$ into $D_{qc}(X)$ and it commutes with arbitrary direct 
sums on $D_{qc}(Y)$.
\end{prop}
\begin{proof} 
Let $U \xrightarrow{u} X$ be a noetherian atlas for $X$ and consider the
Cartesian diagram
\begin{equation}\label{eqn:atlas}
\xymatrix@C.8pc{
V \ar[r]^{v} \ar[d]_{g}  & Y \ar[d]^{f} \\
U \ar[r]_{u} & X.}
\end{equation}
Since $f$ is strongly representable, we see that $g$ is a morphism of
noetherian schemes. To show that $Rf_*(A) \in D_{qc}(X)$ for $A \in D_{qc}(Y)$,
we can assume that $A$ is $K$-injective, and then it suffices to show that 
$u^*f_*(A)$ has quasi-coherent cohomology sheaves. However, it follows from
Corollary~\ref{cor:K-injAD*} that $u^*f_*(A) \xrightarrow{\cong}
g_*v^*(A)$ and so we can assume that $f$ is a morphism of noetherian
schemes, in which case this was shown in \cite[Theorem~3.3.3]{Bo-Va}.
This proves the first part of the proposition.

To show the second part, let $\left(A_{\lambda}\right)$ be a family of
objects in $D_{qc}(Y)$ and let $A =  {\underset {\lambda} \oplus} A_{\lambda}$.
It suffices again to show that
the natural map $u^*\left({\underset {\lambda} \oplus} Rf_* A_{\lambda}\right)
\to u^*Rf_*(A)$ is an isomorphism on any \'etale atlas $U$ of $X$.
However, we have
\[
\begin{array}{lll}
u^*Rf_*(A) & \cong & Rg_*v^*(A) \ \ \ \ \ \ \ \
({\rm by \ Corollary~\ref{cor:K-injAD*}}) \\
& \cong & Rg_*\left({\underset {\lambda} \oplus} v^* A_{\lambda}\right)
\\
& \cong &  {\underset {\lambda} \oplus} Rg_* v^* A_{\lambda} \\
& \cong & {\underset {\lambda} \oplus} u^*Rf_* A_{\lambda} \ \ \ \ \ \ \
({\rm by \ Corollary~\ref{cor:K-injAD*}})\\
& \cong & u^*\left({\underset {\lambda} \oplus} Rf_* A_{\lambda}\right),
\end{array}
\]
where the second and the last isomorphisms follow from the fact that
$V \in$ \'Et$(Y)$ and $U \in$ \'Et$(X)$ and the
third isomorphism follows from \cite[Corollary~3.3.4]{Bo-Va}
as $g$ is a morphism of noetherian schemes and hence quasi-compact and
separated by Proposition~\ref{prop:QCS}.
This completes the proof.
\end{proof}
\begin{cor}\label{cor:restriction}
Let $X$ be a stack and let $U \overset{j}{\inj} X$ be an open substack with
complement $Z \overset{i}{\inj} X$. Let $p: V \to X$ be a
representable and \'etale morphism of stacks and let $T = p^{-1}(Z)$. 
Then $p^*: D_{qc}(X \ {\rm on} \ Z) \to D_{qc}(V \ {\rm on} \ T)$ preserves
perfect and compact objects.
\end{cor}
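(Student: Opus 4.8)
The plan is to reduce everything to Proposition~\ref{prop:direct-image} after first strengthening the hypothesis on $p$. Although $p$ is only assumed representable and \'etale, it is a morphism in ${\sD}{\sM}_S$, hence separated and quasi-compact by Proposition~\ref{prop:QCS}. I claim this forces $p$ to be \emph{strongly} representable: for any scheme $U'\to X$ the fibre product $W=V\times_X U'$ is an algebraic space, and the projection $W\to U'$ is \'etale, separated and quasi-compact, hence quasi-finite and separated over the scheme $U'$ in the noetherian setting; by Zariski's Main Theorem for algebraic spaces (see \cite{LMB}) such a $W$ is a scheme. Consequently Proposition~\ref{prop:direct-image} applies to $p$, so that $Rp_*$ carries $D_{qc}(V)$ into $D_{qc}(X)$ and commutes with arbitrary direct sums; throughout I will exploit the adjunction $(p^*,Rp_*)$.

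Before treating compactness I would check that both functors respect supports. Let $j\colon U\hookrightarrow X$ and $j'\colon p^{-1}(U)=V\setminus T\hookrightarrow V$ be the open complements. For $E\in D_{qc}(X\ {\rm on}\ Z)$ one has $j'^*p^*E\cong (p|_{p^{-1}(U)})^*j^*E=0$ since $j^*E=0$, so indeed $p^*E\in D_{qc}(V\ {\rm on}\ T)$. Dually, for $B$ supported on $T$, \'etale base change (Corollary~\ref{cor:K-injAD*} applied to the \'etale open immersion $j$) gives $j^*Rp_*B\cong R(p|_{p^{-1}(U)})_*\,j'^*B=0$, so $Rp_*$ sends $D_{qc}(V\ {\rm on}\ T)$ into $D_{qc}(X\ {\rm on}\ Z)$.

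For perfect objects the argument is local on an atlas. By Definition~\ref{defn:perfect} choose an atlas $u\colon U'\to X$ with $u^*E$ isomorphic in $D(U')$ to a strictly perfect complex $P$, and form the cartesian square with projections $v\colon W\to V$ and $g\colon W\to U'$. By strong representability $W$ is a scheme, and $v$ is \'etale surjective, so $W$ is an atlas of $V$; moreover $v^*p^*E\cong g^*u^*E\cong g^*P$. As $g$ is a morphism of schemes and $P$ is a bounded complex of vector bundles, $g^*P$ is again strictly perfect, whence $p^*E$ is perfect by Proposition~\ref{prop:properties}(ii).

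Compactness is then formal. The subcategories $D_{qc}(X\ {\rm on}\ Z)$ and $D_{qc}(V\ {\rm on}\ T)$ are closed under direct sums in the ambient derived categories and the inclusions are full, so both $\Hom$-groups and coproducts may be computed in $D_{qc}(X)$ and $D_{qc}(V)$. Given a compact $E$ in $D_{qc}(X\ {\rm on}\ Z)$ and a family $(B_\lambda)$ in $D_{qc}(V\ {\rm on}\ T)$, adjunction together with the two facts above yields
\[
\Hom(p^*E,\bigoplus_\lambda B_\lambda)\cong\Hom(E,\bigoplus_\lambda Rp_*B_\lambda)\cong\bigoplus_\lambda\Hom(p^*E,B_\lambda),
\]
where the first isomorphism uses that $Rp_*$ commutes with direct sums and the second uses the compactness of $E$ applied to the family $(Rp_*B_\lambda)$, which lies in $D_{qc}(X\ {\rm on}\ Z)$ by the second paragraph. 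Hence $p^*E$ is compact. I expect the only genuine obstacle to be the first step, namely verifying that a representable, \'etale, separated morphism is strongly representable, so that Proposition~\ref{prop:direct-image} becomes available; once $Rp_*$ is known to preserve $D_{qc}$, to commute with coproducts, and to respect the support condition, both the perfect and the compact assertions follow from the routine local and adjunction arguments sketched above.
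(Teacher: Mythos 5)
Your proposal is correct and follows essentially the same route as the paper: strong representability of $p$ via Proposition~\ref{prop:QCS} together with the fact that a separated, quasi-finite (here \'etale and quasi-compact) algebraic space over a noetherian scheme is a scheme, perfectness checked on an atlas pulled back along $p$, and compactness by the adjunction $(p^*,Rp_*)$ combined with Proposition~\ref{prop:direct-image} and the support-preservation of $Rp_*$ coming from Corollary~\ref{cor:K-injAD*}. The only differences are cosmetic: the paper cites \cite[Corollary~II.6.17]{Knutson} where you invoke Zariski's Main Theorem from \cite{LMB}, and it treats the perfectness claim as immediate from the definitions rather than writing out the base-change argument you give.
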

\begin{proof} Since $p$ is \'etale and representable, it is in fact
strongly representable by Proposition~\ref{prop:QCS} and 
\cite[Corollary~II.6.17]{Knutson}.
Since it is immediate from the definitions that an \'etale map
preserves perfect complexes, we only need to show that $p$ preserves the
compact objects. Suppose that $A \in D_{qc}(X \ {\rm on} \ Z)$ is compact.
Let $W = p^{-1}(U)$ and consider the Cartesian diagram
\begin{equation}\label{eqn:restriction1}
\xymatrix@C.8pc{
W \ar[r]^{j'} \ar[d]_{p'} & V \ar[d]^{p} \\
U \ar[r]_{j} & X.}
\end{equation}
Let $(A_{\lambda})$ be a family of objects in $D_{qc}(V \ {\rm on} \ T)$.
We have then
\begin{equation}\label{eqn:restriction2} 
\begin{array}{lll}
{\rm Hom}_{D_{qc}(V \ {\rm on} \ T)} 
\left(p^*A, {\underset {\lambda} \oplus} A_{\lambda}\right)
& = & 
{\rm Hom}_{D_{qc}(V)} 
\left(p^*A, {\underset {\lambda} \oplus} A_{\lambda}\right) \\
& = & 
{\rm Hom}_{D_{qc}(X)}
\left(A, Rp_*\left({\underset {\lambda} \oplus} A_{\lambda}\right)\right) \\
& = & {\rm Hom}_{D_{qc}(X)}
\left(A, {\underset {\lambda} \oplus} Rp_*A_{\lambda}\right) 
\hspace*{.8cm} ({\rm by \ Proposition~\ref{prop:direct-image}}) \\
& = & {\rm Hom}_{D_{qc}(X \ {\rm on} \ Z)} 
\left(A, {\underset {\lambda} \oplus} Rp_*A_{\lambda}\right) \\
& = & {\underset {\lambda} \oplus} {\rm Hom}_{D_{qc}(X \ {\rm on} \ Z)} 
\left(A, Rp_*A_{\lambda}\right) \\
& = & {\underset {\lambda} \oplus} {\rm Hom}_{D_{qc}(X)} 
\left(A, Rp_*A_{\lambda}\right) \\
& = & {\underset {\lambda} \oplus} {\rm Hom}_{D_{qc}(V)} 
\left(p^*A, A_{\lambda}\right) \\
& = & {\underset {\lambda} \oplus} {\rm Hom}_{D_{qc}(V \ {\rm on} \ T)} 
\left(p^*A, A_{\lambda}\right). 
\end{array}
\end{equation}
Here, the second and the one before the last equality follow from 
Lemma~\ref{lem:injective} and \cite[Lemma~3.3]{Krishna} since
$(p^*, p_*)$ is a pair of adjoint functors with $p^*$ exact.
The fourth equality follows from Corollary~\ref{cor:K-injAD*} since 
$j$ is representable and \'etale and the fifth equality holds because
$A$ is compact in $D_{qc}(X \ {\rm on} \ Z)$. This shows the desired
compactness of $p^*A$.
\end{proof}
The following result is a generalization of \cite[Theorem~3.1.1]{Bo-Va}.
\begin{cor}\label{cor:compSc}
Let $X$ be a noetherian and separated scheme and let $U \overset{j} {\inj}
X$ be an open subscheme with complement $Z \overset{i} {\inj} X$. Let 
$D_{qc}(X \ {\rm on} \ Z)$ denote the full subcategory of $D_{qc}(X)$ 
consisting of complexes which are acyclic over $U$. Then an object of
$D_{qc}(X \ {\rm on} \ Z)$ is compact if and only if it a perfect complex
on $X$ which is acyclic over $U$.
\end{cor}
\begin{proof} If $A \in D_{qc}(X \ {\rm on} \ Z)$ is perfect, then it is
also perfect as an object of $D_{qc}(X)$ and hence compact by 
\cite[Theorem~3.1.1]{Bo-Va}. But an object of $D_{qc}(X \ {\rm on} \ Z)$ which
is compact in $D_{qc}(X)$ is also compact as an object of
$D_{qc}(X \ {\rm on} \ Z)$.

Suppose now that $A \in D_{qc}(X \ {\rm on} \ Z)$ is compact. It follows from
Corollary~\ref{cor:restriction} that the restriction of $A$ to any given
affine open subscheme $V \inj X$ is compact in 
$D_{qc}(V \ {\rm on} \ V \cap Z)$.
Since $V$ is noetherian and affine, it has the resolution property (in fact 
has an ample line bundle) and hence $A$ is perfect on $V$ by 
\cite[Proposition~5.4.2]{TT} (see also  \cite{NR}). Hence $X$ has a finite
affine open cover $\{X_{\alpha} \inj X\}$ such that the restriction of
$A$ to each $D_{qc}(X_{\alpha} \ {\rm on} \ X_{\alpha} \cap Z)$ is perfect. We 
conclude that $A$ must be perfect on $X$ which is acyclic over $U$.
\end{proof} 
The following are our main results of this section.
\begin{thm}\label{thm:Perf-comp}
Let $X$ be stack and let $U \overset{j}{\inj} X$ be an open substack with
complement $Z \overset{i}{\inj} X$. Then every compact object in
$D_{qc}(X \ {\rm on} \ Z)$ is perfect.
If $X$ is a tame stack with a coarse moduli scheme, then every perfect 
complex in $D_{qc}(X \ {\rm on} \ Z)$ is compact.
\end{thm}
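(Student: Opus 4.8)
The plan is to treat the two assertions separately, since only the second uses tameness and the coarse moduli hypotheses; the first is a pure descent argument to an atlas.

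For the direction \emph{compact $\Rightarrow$ perfect}, I would reduce to the scheme case via an atlas. Perfectness is detected on an atlas (Definition~\ref{defn:perfect} and Proposition~\ref{prop:properties}(ii)), so it suffices to exhibit one atlas on which the pullback is strictly perfect. Choose a representable \'etale atlas $u: U \to X$ with $U$ a noetherian scheme and put $T = u^{-1}(Z)$. Since $u$ is representable and \'etale, Corollary~\ref{cor:restriction} shows that $u^*A$ is compact in $D_{qc}(U \ {\rm on} \ T)$, and then Corollary~\ref{cor:compSc} identifies $u^*A$ as a perfect complex on the scheme $U$ which is acyclic over the open complement. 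A perfect complex on a scheme is Zariski-locally strictly perfect, so refining $U$ by a finite open cover (whose disjoint union is again an atlas of $X$) makes the pullback strictly perfect, whence $A$ is perfect. This step is entirely independent of tameness.

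For the harder direction \emph{perfect $\Rightarrow$ compact}, the key reduction is to the single statement that $R\Gamma(X, -)$ commutes with arbitrary direct sums on $D_{qc}(X)$. Because $D_{qc}(X \ {\rm on} \ Z)$ is a full subcategory closed under the direct sums formed in $D_{qc}(X)$, it is enough to prove $A$ is compact in $D_{qc}(X)$. For $A$ perfect the dual $A^{\vee}$ is again perfect and $R\sHom(A, -) \cong A^{\vee} \otimes^{L} (-)$; since $\otimes^{L}$ with a perfect complex commutes with direct sums, one obtains
\[
\Hom_{D_{qc}(X)}\left(A, {\underset{\lambda}\oplus} B_{\lambda}\right)
\cong H^0 R\Gamma\left(X, {\underset{\lambda}\oplus} R\sHom(A, B_{\lambda})\right).
\]
Once $R\Gamma(X, -)$ passes through the direct sum, and since $H^0$ and $R\Gamma$ both commute with direct sums, the right-hand side becomes ${\underset{\lambda}\oplus}\, \Hom_{D_{qc}(X)}(A, B_{\lambda})$, giving the compactness of $A$.

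To establish that $R\Gamma(X, -)$ commutes with direct sums I would factor it through the coarse moduli scheme $\pi: X \to Y$, writing $R\Gamma(X, -) = R\Gamma(Y, -) \circ R\pi_*$. By Lemma~\ref{lem:moduli} the stack is \'etale-locally of the form $[U_{\alpha}/\Gamma_{\alpha}]$ with $\Gamma_{\alpha}$ finite, and tameness forces $|\Gamma_{\alpha}|$ to be invertible; hence the pushforward along $[U_{\alpha}/\Gamma_{\alpha}] \to U_{\alpha}/\Gamma_{\alpha}$ is the formation of $\Gamma_{\alpha}$-invariants, which is exact and commutes with direct sums via the averaging idempotent. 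As the coarse space is compatible with the \'etale localizations of Lemma~\ref{lem:moduli}, this shows $\pi_*$ is exact on $QC(X)$, so $R\pi_* = \pi_*$ commutes with direct sums (as in Proposition~\ref{prop:direct-image}). Finally $Y$ is a noetherian separated scheme, so $\sO_Y$ is perfect, hence compact, in $D_{qc}(Y)$ by \cite[Theorem~3.1.1]{Bo-Va}; equivalently $R\Gamma(Y, -)$ commutes with direct sums. Composing the two gives the claim. The main obstacle is exactly this last point: for a general (non-tame) Deligne--Mumford stack $R\Gamma(X, -)$ need not commute with direct sums, and it is precisely the invertibility of the stabilizer orders — yielding exactness of $\pi_*$ — together with the coarse-moduli-scheme hypothesis that makes the argument go through; carefully justifying the exactness of $\pi_*$ and its compatibility with the \'etale localizations is the technical heart.
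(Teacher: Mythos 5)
Your first direction (compact $\Rightarrow$ perfect) is essentially identical to the paper's argument: pull back along a noetherian atlas, apply Corollary~\ref{cor:restriction} and Corollary~\ref{cor:compSc}, and conclude via Proposition~\ref{prop:properties}. For the hard direction (perfect $\Rightarrow$ compact) you take a genuinely different route. The paper reduces, via the Zariski-local criterion for compactness and Lemma~\ref{lem:moduli}, to a chart situation and then invokes T{\"o}en's descent lemma for locally presentable dg-categories \cite[Lemma~3.4]{Toen} together with \cite[Theorem~4.12]{Krishna} to conclude. You instead prove compactness directly from the statement that $R\Gamma(X,-)$ commutes with arbitrary direct sums, obtained by factoring $R\Gamma(X,-) = R\Gamma(Y,-)\circ R\pi_*$ through the coarse moduli scheme, using tameness to make $\pi_*$ exact and \cite[Theorem~3.1.1]{Bo-Va} for $Y$. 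This is the standard ``modern'' argument; it avoids T{\"o}en's dg-machinery entirely and makes transparent exactly where tameness and the coarse-moduli-scheme hypothesis enter, whereas the paper's route outsources precisely these unbounded-derived-category difficulties to \cite{Toen} and \cite{Krishna}.

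However, as written there is a concrete gap at the step ``$\pi_*$ is exact on $QC(X)$, so $R\pi_* = \pi_*$ commutes with direct sums (as in Proposition~\ref{prop:direct-image}).'' Proposition~\ref{prop:direct-image} is proved only for \emph{strongly representable} morphisms, and the coarse moduli map $\pi$ is not representable; this is exactly why none of the paper's direct-sum results apply to it off the shelf. More substantively, objects of $D_{qc}(X)$ are unbounded complexes of arbitrary $\sO_X$-modules with quasi-coherent cohomology: exactness of $\pi_*$ on $QC(X)$ controls $R^i\pi_*$ of quasi-coherent \emph{sheaves}, but to deduce that $R\pi_*$ is computed termwise (and hence commutes with coproducts) on all of $D_{qc}(X)$ you would need to replace such complexes by complexes of quasi-coherent sheaves, i.e.\ the equivalence $D(qc/X)\simeq D_{qc}(X)$, which requires tameness \emph{and} the resolution property --- and $X$ has the latter only on the charts $[U_{\alpha}/\Gamma_{\alpha}]$, which by Lemma~\ref{lem:moduli} are merely \'etale covers of $q^{-1}(Y_{\alpha})$, not equal to them. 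The repair is to check the coproduct-commutation after pullback along the \'etale surjective map $V_i \to Y_i$ (such pullback is conservative and commutes with $R\pi_*$ by Corollary~\ref{cor:K-injAD*}), thereby reducing to $\pi' : [U_i/\Gamma_i] \to V_i$, where \cite[Lemma~3.5]{Krishna} and arguments in the style of Theorem~\ref{thm:CER} handle the unbounded complexes. In other words, filling this gap amounts to reproving \cite[Theorem~4.12]{Krishna} together with the descent step that the paper extracts from \cite[Lemma~3.4]{Toen}; it is the actual content of the theorem, not a routine verification, and your sketch currently identifies the technical heart one step too early (at exactness of $\pi_*$, which is classical, rather than at its propagation to the unbounded derived category).
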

\begin{proof}
We first assume that $A$ is compact in $D_{qc}(X \ {\rm on} \ Z)$ and show that
it is perfect.
Let $U \xrightarrow{u} X$ be a noetherian atlas of $X$. We have seen before 
that $u$ is strongly representable. We can then apply 
Corollary~\ref{cor:restriction} to conclude that $u^*A$ is a compact
object of $D_{qc}(U \ {\rm on} \ u^{-1}(Z))$. 
It subsequently follows from Corollary~\ref{cor:compSc} 
that $u^*A$ is a perfect complex on the scheme $U$. We now apply 
Proposition~\ref{prop:properties} ($iii$) to get an \'etale cover
$V \xrightarrow{v} U$ such that $v^*u^*A$ is quasi-isomorphic to a 
strictly perfect complex on $V$. Since the composite $V \to X$ is an
\'etale atlas of $X$, we conclude that $A$ is a perfect complex $X$ which
is acyclic over $U$.

To prove that a perfect complex $A$ in $D_{qc}(X \ {\rm on} \ Z)$ is a
compact object of $D_{qc}(X \ {\rm on} \ Z)$, we note that $A$ is then a
perfect complex on $X$. Hence it is enough to prove the case when $Z =
\emptyset$. 
So we assume now that $X$ is a tame stack and has a coarse moduli scheme 
$q : X \to Y$. Then $Y$ is a noetherian and separated scheme by 
\cite[Theorem~6.12]{DR}.
It follows from Lemma~\ref{lem:moduli} that $Y$ has a finite affine cover
$\{Y_i \to Y\}_{1 \le i \le n}$ such that for each $i$, the diagram 
\begin{equation}\label{eqn:moduli*}
\xymatrix@C.8pc{
[U_i/{\Gamma_i}] \ar[r]^{j'_i} \ar[d]_{p_i} & q^{-1}(Y_i) \ar[d]^{q_i} \\
V_i \ar[r]_{j_i} & Y_i}
\end{equation}
is Cartesian, $U_i$ and $V_i$ are affine noetherian schemes and the
horizontal maps are \'etale and surjective and $[U_i/{\Gamma_i}]$ is a tame 
stack. It follows that each $p_i$ is a coarse moduli space.

To show the compactness of an object, we first observe from the above
definition of the compactness that an object $A \in D_{qc}(X)$ is compact
if and only if given any family $(A_{\lambda})$, any morphism
$A \to {\underset {\lambda}\oplus} A_{\lambda}$ factors through the morphism
into a finite direct sum of $A_{\lambda}$'s. From this criterion, it is
easy to see that $A$ is compact if there is a finite Zariski open cover
$X_{i} \inj X$, such that the restriction of $A$ to each open substack
$X_i$ is compact. 

To prove now the second part of the theorem when $Z = \emptyset$, let $A$ be a 
perfect complex on $X$. From the above observation, it suffices to show that 
the restriction of $A$ to each $X_i = q^{-1}(Y_i)$ is compact, where 
$\{Y_i \inj Y\}_{1 \le i \le n}$ is an affine open cover as in ~\eqref{eqn:moduli*}.
Since the perfectness is clearly preserved under the restriction to an open 
substack, we see that $A|_{X_i}$ is perfect for each $i$.
Thus, we are reduced to proving the theorem when $X$ a tame stack with a
coarse moduli scheme $q : X \to Y$ such that $Y$ is noetherian and affine
and there is a Cartesian diagram
\begin{equation}\label{eqn:moduli*1}
\xymatrix@C.8pc{
[U/{\Gamma}] \ar[r]^{j'} \ar[d]_{p} &  X \ar[d]^{q} \\
V \ar[r]_{j} & Y,}
\end{equation}
where the horizontal morphisms are \'etale covers, $U$ and $V$ are noetherian
and affine schemes and $\Gamma$ is a finite group acting on $U$ such that
$X' = [U/{\Gamma}]$ is a tame stack. The map
$X' \xrightarrow{p} V$ is then a coarse moduli scheme.

We now note that as a locally presentable dg-category in the sense of
\cite[Section~3]{Toen}, the category $C_{qc}(X)$ is simply the unit 
dg-category $\bf{1}$ over $X$. Let $\alpha$ be the locally presentable
dg-category on $Y$ defined by 
\[
(W \to Y) \mapsto C_{qc}\left(q^{-1}(W)\right).
\]
It is then clear that in the notations of \cite{Toen}, 
$C_{qc}(X) = L_{\bf{1}}(X) = L_{\alpha}(Y)$. Thus, it suffices to show that
$A$ is a compact object in $L_{\alpha}(Y)$. Since $V \xrightarrow{j} Y$
is an \'etale and surjective map between affine schemes, it suffices to show 
using \cite[Lemma~3.4]{Toen} that $j^*(A)$ is compact in $L_{\alpha}(V)$.
However, we see as above that $L_{\alpha}(V) = L_{j'^{-1}(\bf{1})}(X')
= L_{\bf{1}}(X') = C_{qc}(X')$. In particular, it suffices to show that
$j'^*(A)$ is compact in $D_{qc}(X')$.

Since $j'$ is \'etale, we see that $A' = j'^*(A)$ is a perfect complex
on $X'$. Since $U$ is an affine noetherian scheme, it clearly has the
resolution property. Since $\Gamma$ is a finite group acting on $U$,
it follows from \cite[Lemma~2.14]{Thomason1} that $X'$ also has the resolution
property. The tameness of $[U/{\Gamma}]$ and \cite[Lemma~3.5]{Krishna} imply
that there is an equivalence of categories $D(qc/{X'}) 
\xrightarrow{\cong} D_{qc}(X')$.
It follows now from \cite[Theorem~4.12]{Krishna} that $A'$ is compact
in $D_{qc}(X')$. This completes the proof of the theorem.
\end{proof} 
\begin{thm}[\bf{Localization}]\label{thm:Localization}
Let $X$ be a tame stack which admits a coarse moduli scheme. Let $U \inj X$
be an open substack with the complement $Z \inj X$. Then there is a
homotopy fibration sequence of non-connective spectra
\[
K(X \ {\rm on} \ Z) \to K(X) \to K(U).
\]
In particular, one has a long exact sequence 
\[
\cdots \to K_{i}(X \ {\rm on} \ Z) \to K_i(X) \to K_i(U) \to 
K_{i-1}(X \ {\rm on} \ Z) \to \cdots
\]
for all $i \in \Z$.
\end{thm}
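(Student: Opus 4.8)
The plan is to deduce the fibration sequence from the corresponding localization sequence of the ambient compactly generated triangulated categories, passing first to compact objects and then to non-connective $K$-theory. First I would record that the hypotheses propagate to the open substack: since $X$ is tame and admits a coarse moduli scheme $q : X \to Y$, the restriction $q|_U : U \to q(U)$ exhibits $U$ as a tame stack whose coarse moduli scheme is the open subscheme $q(U) \inj Y$. Consequently \thmref{thm:Perf-comp} applies verbatim to each of $X$, $U$, and to $D_{qc}(X \ {\rm on} \ Z)$, so that in all three cases the perfect complexes are precisely the compact objects. By definition this identifies $K(X)$, $K(U)$ and $K(X \ {\rm on} \ Z)$ with the non-connective $K$-theory spectra of the triangulated categories of compact objects $D_{qc}(X)^c$, $D_{qc}(U)^c$ and $D_{qc}(X \ {\rm on} \ Z)^c$ respectively.

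Next I would set up the localization sequence at the level of the large categories. The categories $D_{qc}(X)$, $D_{qc}(U)$ and $D_{qc}(X \ {\rm on} \ Z)$ are compactly generated: for $X$ this follows from \thmref{thm:Perf-comp} together with the atlas and resolution property used in its proof, and the with-supports version is generated by the compact objects supported on $Z$ (\corref{cor:restriction}). The restriction $j^* : D_{qc}(X) \to D_{qc}(U)$ is a Bousfield localization whose kernel is exactly $D_{qc}(X \ {\rm on} \ Z)$, and I would invoke the dg-categorical localization theorem of T{\"o}en \cite{Toen} --- in the form already exploited in the proof of \thmref{thm:Perf-comp}, writing $C_{qc}(X) = L_{\bf{1}}(X)$ --- to identify the Verdier quotient $D_{qc}(X)/D_{qc}(X \ {\rm on} \ Z)$ with $D_{qc}(U)$. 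This yields a short exact sequence of compactly generated triangulated (indeed dg) categories $D_{qc}(X \ {\rm on} \ Z) \to D_{qc}(X) \xrightarrow{j^*} D_{qc}(U)$.

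I would then pass to compact objects. By the Thomason--Neeman localization theorem, a localization sequence of compactly generated triangulated categories restricts on compact objects to a sequence $D_{qc}(X \ {\rm on} \ Z)^c \to D_{qc}(X)^c \xrightarrow{j^*} D_{qc}(U)^c$ which is exact up to summands: $j^*$ kills $D_{qc}(X \ {\rm on} \ Z)^c$, and the induced functor from the idempotent completion of the Verdier quotient $D_{qc}(X)^c / D_{qc}(X \ {\rm on} \ Z)^c$ to $D_{qc}(U)^c$ is an equivalence onto a dense (cofinal) subcategory. Finally, feeding this exact-up-to-factors sequence into the localization theorem for non-connective $K$-theory of triangulated categories (the Bass-style delooping of \cite{TT}, cf. also the categorical framework of \cite{Toen}) produces the asserted homotopy fibration $K(X \ {\rm on} \ Z) \to K(X) \to K(U)$; the passage to \emph{non-connective} spectra is exactly what absorbs the idempotent-completion discrepancy via the cofinality theorem, so that no $K_0$-obstruction survives. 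The long exact sequence of homotopy groups is then the associated sequence of the fibration.

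The hard part will be verifying that the sequence of large categories is genuinely a localization sequence with kernel precisely $D_{qc}(X \ {\rm on} \ Z)$ and quotient precisely $D_{qc}(U)$ in the stacky setting --- that is, that $j^*$ is essentially surjective up to summands on compact objects and that its kernel is generated by the compacts supported on $Z$. This is where T{\"o}en's localization for locally presentable dg-categories is indispensable, since the naive scheme-theoretic extension-of-perfect-complexes argument of \cite{TT} is not directly available for stacks; the reduction carried out in \thmref{thm:Perf-comp} to the local model $[U/\Gamma] \to V$ with a coarse moduli scheme is precisely what makes T{\"o}en's theorem applicable here.
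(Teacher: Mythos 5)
Your proposal is correct and is in substance the paper's own proof: the paper disposes of this theorem in one line, citing Theorem~\ref{thm:Perf-comp} (perfect $=$ compact for tame stacks with coarse moduli schemes) together with T{\"o}en's Corollary~4.3, and your reconstruction --- Bousfield localization of the big categories $D_{qc}(X \ {\rm on} \ Z) \to D_{qc}(X) \to D_{qc}(U)$, Neeman--Thomason restriction to compact objects, and the cofinality/Bass-delooping argument giving the fibration of non-connective spectra --- is exactly the content packaged inside that corollary. One caveat: Corollary~\ref{cor:restriction} does not give what you cite it for (it only says \'etale representable pullback preserves compact and perfect objects, not that $D_{qc}(X \ {\rm on} \ Z)$ is compactly generated by objects compact in $D_{qc}(X)$), but since you correctly identify that generation statement as the hard stacky input and defer it to T{\"o}en's machinery --- which is precisely what the paper does --- this does not affect the correctness of the argument.
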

\begin{proof} This follows immediately from Theorem~\ref{thm:Perf-comp}
and \cite[Corollary~4.3]{Toen}.
\end{proof}
\section{Excision for $K$-theory of stacks}\label{section:Loc-Exc}
In this section, we study the excision
property of the $K$-theory of perfect complexes on stacks.
For a stack $X$ and a closed substack $Z \inj X$, let $C_Z(Perf/X)$ denote the 
dg-category of perfect complexes on $X$ which are acyclic on the complement
of $Z$. Let $D_Z(Perf/X)$ denote the corresponding derived category.
Then $D_Z(Perf/X)$ is a full triangulated subcategory of 
$D_{qc}(X \ {\rm on} \ Z)$. We write $D(Perf/X)$ for the full triangulated
subcategory of perfect complexes on $X$. Let $K(X \ {\rm on} \ Z)$ denote the 
non-connective $K$-theory spectrum of the dg-category $C_Z(Perf/X)$. 
Let $K(X)$ denote the non-connective $K$-theory spectrum of the perfect
complexes on $X$. We refer the reader to \cite{Cisinski} for details about the 
non-connective $K$-theory of dg-categories.

For a stack $X$, let $D^{-}_{qc}(X)$ and $D^{+}_{qc}(X)$ denote the full 
triangulated subcatgories of $D_{qc}(X)$ consisting of the bounded above
and bounded below complexes respectively. We define 
$D^{-}_{qc}(X \ {\rm on} \ Z)$ and $D^{+}_{qc} (X \ {\rm on} \ Z)$ in an obvious 
way. For a complex $E \in D(X)$, let
\[
{\tau}^{\le n}(E) = \left( \to E^i \to \cdots \to E^{n-1} \to Z^nE \to 0
\right) \ {\rm and} 
\]
\[
{\tau}^{\ge n}(E) = \left(0 \to E^n/{B^nE} \to E^{n+1} \to \cdots\right)
\]
denote the good truncations, where $B^nE$ and $Z^nE$ denote the boundaries
and the kernels of the differential of $E$. There is a natural isomorphism
\begin{equation}\label{eqn:ex0}
~{\underset {n}{\varinjlim}}~ {\tau}^{\le n}(E) \xrightarrow {\cong} E.
\end{equation}
\begin{equation}\label{eqn:ex1}
E \xrightarrow {\cong}  ~{\underset {n}{\varprojlim}}~ {\tau}^{\ge n}(E).
\end{equation}
\begin{lem}\label{lem:prop:ex2}
Let $f : Y \to X$ be a strongly representable morphism of stacks.
Then $Rf_*:D_{qc}(Y) \to D_{qc}(X)$ maps $D^{-}_{qc}(Y)$ into $D^{-}_{qc}(X)$ 
and $D^{+}_{qc}(Y)$ into $D^{+}_{qc}(X)$.
\end{lem}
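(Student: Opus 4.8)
The plan is to reduce both assertions to the case of a morphism of noetherian schemes, where the bounded-below statement is governed by the left exactness of the pushforward and the bounded-above statement by its finite cohomological dimension. First I would choose a noetherian atlas $u\colon U\to X$ and form the Cartesian square as in~\eqref{eqn:atlas}, with $v\colon V\to Y$ and $g\colon V\to U$; since $f$ is strongly representable, $g$ is a morphism of noetherian schemes, and it is quasi-compact and separated by Proposition~\ref{prop:QCS}. By Corollary~\ref{cor:K-injAD*} there is a natural isomorphism $u^*Rf_*(A)\cong Rg_*v^*(A)$ in $D_{qc}(U)$. Because $u^*$ and $v^*$ are exact and $u$ is faithfully flat (being \'etale and surjective), a complex lies in $D^{\pm}_{qc}(X)$ if and only if its $u^*$-pullback lies in $D^{\pm}_{qc}(U)$, and $v^*(A)\in D^{\pm}_{qc}(V)$ if and only if $A\in D^{\pm}_{qc}(Y)$. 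This reduces both statements to showing that $Rg_*$ preserves $D^{-}_{qc}$ and $D^{+}_{qc}$ for the scheme morphism $g$.

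For the bounded-below case I would argue directly: given $A\in D^{+}_{qc}(V)$ with $\sH^q(A)=0$ for $q<n$, the hypercohomology spectral sequence $E_2^{p,q}=R^pg_*\sH^q(A)\Rightarrow \sH^{p+q}(Rg_*A)$ has $E_2$-page supported in $p\ge 0$ (left exactness of $g_*$) and $q\ge n$, so $\sH^i(Rg_*A)=0$ for $i<n$ and $Rg_*A\in D^{+}_{qc}(U)$. Combined with Proposition~\ref{prop:direct-image}, which guarantees quasi-coherence of the cohomology sheaves, this settles the bounded-below assertion.

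The main obstacle is the bounded-above case, for which I must show that $Rg_*$ has finite cohomological dimension on quasi-coherent sheaves. Working locally on $U$, I may assume $U$ is affine; then $V$ is quasi-compact and so admits a finite affine open cover $\{V_1,\dots,V_N\}$, and the separatedness of $g$ forces every intersection $V_{i_0}\cap\cdots\cap V_{i_k}$ to be affine. The associated \v{C}ech complex computes $Rg_*\sF$ for quasi-coherent $\sF$ and has length at most $N$, whence $R^pg_*=0$ for $p\ge N$; since $U$ is noetherian it is covered by finitely many affines, which yields a uniform bound $d$. With finite cohomological dimension in hand, the standard consequence — provable from the same spectral sequence $E_2^{p,q}=R^pg_*\sH^q(A)\Rightarrow\sH^{p+q}(Rg_*A)$, whose nonzero columns are now confined to $0\le p\le d$ — is that $Rg_*$ maps $D^{\le m}_{qc}$ into $D^{\le m+d}_{qc}$; taking $m$ to be an upper bound for the cohomological degrees of $A$ shows $Rg_*A\in D^{-}_{qc}(U)$. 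The delicate point is precisely this finite-dimensionality bound together with its uniformity over $U$, after which the convergence of the spectral sequence for complexes bounded above but not below is automatic, because its $p$-range is bounded.
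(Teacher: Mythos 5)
Your reduction to a morphism $g\colon V\to U$ of noetherian schemes via the atlas and Corollary~\ref{cor:K-injAD*} is the same as the paper's, and your bounded-below argument is correct: for a bounded-below complex the Cartan--Eilenberg total complex is a genuine $K$-injective resolution, so the hyperderived spectral sequence exists and converges, and quasi-coherence of the cohomology of $Rg_*A$ comes from Proposition~\ref{prop:direct-image}. (The paper instead handles this half by a Mayer--Vietoris induction reducing to the affine case, where $D^{+}(qc/X)\cong D^{+}_{qc}(X)$ by \cite[B.16]{TT} and $f_*$ is exact on quasi-coherent sheaves.) The genuine gap is in the bounded-above half, at the final sentence: once the bound $R^pg_*=0$ for $p>d$ on quasi-coherent sheaves is known, you assert that ``the convergence of the spectral sequence for complexes bounded above but not below is automatic, because its $p$-range is bounded.'' What you need is a spectral sequence $E_2^{p,q}=R^pg_*\sH^q(A)$ whose abutment is $\sH^{p+q}(Rg_*A)$, with $Rg_*A$ defined via a $K$-injective resolution. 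For $A$ unbounded below, the hyperderived spectral sequence is built from a Cartan--Eilenberg resolution $A\to{\sI}^{\bullet\bullet}$, and its abutment is the cohomology of $g_*{\widehat{\rm Tot}}({\sI})$ --- the Cartan--Eilenberg hypercohomology. As the paper itself stresses in Section~\ref{section:Perfect} (citing \cite{Weibel}), ${\widehat{\rm Tot}}({\sI})$ need not be $K$-injective for unbounded complexes, so this abutment need not compute $Rg_*A$. A bounded $p$-range does force degeneration at a finite page and strong convergence of the spectral sequence \emph{to its own abutment}, but it does not identify that abutment with the derived functor you are bounding; and your finite-dimensionality bound holds only on quasi-coherent sheaves, not on all $\sO$-modules of the \'etale site, which is what a blanket claim of this kind would require. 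In effect you are re-proving \cite[Theorem~3.3.3]{Bo-Va} --- exactly the result the paper cites for this half --- with the crucial step missing.

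The step can be repaired, but it needs an extra idea rather than bookkeeping. One route: first prove the bound for \emph{bounded} complexes (there your spectral sequence, or induction on the length via truncation triangles, is fine); then fix $i>m+d$ and compare $A\in D^{\le m}_{qc}(V)$ with its truncations $\tau^{\ge -n}A$ as in ~\eqref{eqn:ex1}. Each $\tau^{\ge -n}A$ is bounded, so $\sH^i\bigl(Rg_*\tau^{\ge -n}A\bigr)=0$ for all $n$; moreover the cone of $\tau^{\ge -n-1}A\to\tau^{\ge -n}A$ is a shift of the quasi-coherent sheaf $\sH^{-n-1}(A)$, so the bound on $R^pg_*$ shows the inverse system $\sH^{i-1}\bigl(Rg_*\tau^{\ge -n}A\bigr)$ is eventually constant. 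Since $A\cong R\varprojlim_n\tau^{\ge -n}A$ and $Rg_*$ commutes with derived limits, the Milnor sequence (whose $\varprojlim^1$ term vanishes by eventual constancy) yields $\sH^i(Rg_*A)=0$. Alternatively, run your \v{C}ech/Mayer--Vietoris induction at the level of complexes rather than sheaves: the localization triangles for an affine cover of $V$ reduce the claim to $g$ an affine morphism, where it follows from the \emph{unbounded} equivalence $D(qc/V)\cong D_{qc}(V)$ for affine schemes together with exactness of $g_*$ on quasi-coherent modules. Note that this last equivalence is strictly stronger than the bounded-below statement \cite[B.16]{TT} used by the paper for the other half, which is precisely why the bounded-above case cannot be disposed of as cheaply as the bounded-below one.
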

\begin{proof} We first assume that $X$ is a scheme. The strong 
representability then implies that $Y$ is also a scheme. The first
assertion then follows from \cite[Theorem~3.3.3]{Bo-Va}.
To prove the second assertion, we can use Corollary~\ref{cor:K-injAD*}
to assume that $X$ is affine. If $\{Y_i \inj Y\}_{1 \le i \le n}$ is a finite
affine cover of $Y$, let $U = Y_2 \cup \cdots Y_n$ and $V = U \cap Y_1$.
Then the separatedness of $Y$ implies that $U$ and $V$ can be 
covered by at most $n-1$ affine open sets. Let 
$U \overset{j}{\inj} Y, Y_1 \overset{j_1}{\inj} Y$ and
$V \overset{j'}{\inj} Y$ denote the inclusion maps.
Then the exact triangle
\[
Rf_*E \to Rf_*(j^*E) \oplus Rf_*(j^*_1A) \to Rf_*(j'^*A) \to
\]
and an induction on the number of open sets in an affine cover of
$Y$ show that it suffices to prove the case when $Y$ is also affine.
But in this case, there is an equivalence $D^{+}(qc/X) 
\xrightarrow{\cong} D^{+}_{qc}(X)$ by \cite[B.16]{TT} and similarly
for $Y$. The lemma now follows in the scheme case since $f_*$ is exact on the 
category of quasi-coherent sheaves for maps between affine schemes.
In general, we can choose an atlas $U \to X$ and
use the strong representability of $f$ and Corollary~\ref{cor:K-injAD*}
to reduce to the case of schemes.
\end{proof}
\begin{prop}\label{prop:EXC1}
Let $f : Y \to X$ be a representable and \'etale morphism of stacks.
Let $Z \overset{i}{\inj} X$ be a closed
substack such that the restriction $f : Z {\times}_X Y \to Z$ is an
isomorphism. Then for any $E \in D^+_{qc}(X \ {\rm on} \ Z)$ and 
$F \in D^+_{qc}(Y \ {\rm on}\ Z {\times}_X Y)$, the natural maps 
$E \to Rf_* \circ f^*(E)$ and $f^* \circ Rf_*(F) \to F$ are isomorphisms.  
\end{prop}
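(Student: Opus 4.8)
The plan is to show both the unit $E \to Rf_* f^*(E)$ and the counit $f^* Rf_*(F) \to F$ are isomorphisms by a three-stage reduction: first to the case where $X$ is a scheme, then to the case of a single quasi-coherent sheaf supported on $|Z|$, and finally to the observation that $f$ restricts to an isomorphism over every infinitesimal neighbourhood of $Z$.

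First I would reduce to the case where $X$ is a scheme. Choosing a noetherian atlas $u : X_0 \to X$, the functor $u^*$ is exact and conservative on $D_{qc}$ since $u$ is a surjective \'etale cover. Base change (Corollary~\ref{cor:K-injAD*}) gives $u^* Rf_* \cong Rf'_* u'^*$ for the square obtained by pulling $f$ back along $u$, where $f' : Y_0 = Y \times_X X_0 \to X_0$ is \'etale; moreover $Y_0$ is a scheme because $f$ is representable \'etale, hence strongly representable by Proposition~\ref{prop:QCS} and \cite[Corollary~II.6.17]{Knutson}. These identifications carry the unit for $f$ to the unit for $f'$, and likewise for the counit, and the hypothesis $f^{-1}(Z) \cong Z$ base-changes to $f'^{-1}(Z_0) \cong Z_0$. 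So it suffices to treat \'etale morphisms of schemes.

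Next, since the complexes are bounded below (Lemma~\ref{lem:prop:ex2} keeps $Rf_* f^*(E)$ in $D^+_{qc}$) and $f^*$ is exact, I would use the hypercohomology spectral sequence to reduce to a single quasi-coherent sheaf $\sF$ on $X$ with $\supp \sF \subseteq |Z|$: namely $\sF \xrightarrow{\cong} f_* f^* \sF$ with $R^q f_* f^* \sF = 0$ for $q > 0$, together with the symmetric statement for a sheaf on $Y$ supported on $f^{-1}(|Z|)$. The crux is then the following rigidity: writing $\sI$ for the ideal of $Z$ and $Z_n = V(\sI^n)$, the \'etale morphism $f^{-1}(Z_n) \to Z_n$ has isomorphic restriction over $Z$, so by topological invariance of the \'etale site it is itself an isomorphism for every $n$. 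Over each $Z_n$ the functors $f^*$ and $f_*$ are thus mutually inverse equivalences, the unit and counit are isomorphisms, and all higher direct images vanish.

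Finally I would pass to the colimit. On a noetherian scheme any quasi-coherent $\sF$ supported on $|Z|$ is the filtered colimit $\sF = \colim_n \sF[\sI^n]$ of its subsheaves annihilated by a power of $\sI$, each a sheaf on some $Z_n$. As $f^*$ commutes with colimits and $R^q f_*$ commutes with filtered colimits for the quasi-compact and separated morphism $f$ (Proposition~\ref{prop:QCS}), the isomorphisms over the $Z_n$ assemble to $\sF \cong f_* f^* \sF$ with vanishing higher direct images, and dually for the counit; feeding this back through the spectral sequence of the previous step finishes the bounded-below case. The step I expect to be the main obstacle is the infinitesimal rigidity claim, that $f$ is an isomorphism over each $Z_n$, since everything after it is formal colimit and base-change bookkeeping.
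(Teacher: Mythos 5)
Your proof is correct, but it takes a genuinely different route through the scheme case. The reduction to schemes (noetherian atlas, conservativity of $u^*$, the base-change isomorphism of Corollary~\ref{cor:K-injAD*}, and strong representability supplying that $Y_0$ is a scheme) is exactly the paper's reduction, just performed first rather than last. Where you diverge is on schemes: the paper does not re-prove excision there, but quotes Thomason--Trobaugh \cite[Theorem~2.6.3]{TT} for \emph{bounded} complexes and devotes its entire effort to extending that statement to bounded-below complexes, writing $E \cong \varinjlim_n \tau^{\le n}(E)$ and pushing this colimit through $Rf_*\circ f^*$ by means of Cartan--Eilenberg resolutions (Theorem~\ref{thm:CER}), the triangle~\eqref{eqn:ex03*}, and Proposition~\ref{prop:direct-image}. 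You instead perform a d\'evissage to a single quasi-coherent sheaf supported on $|Z|$ via the degenerating hypercohomology spectral sequence (legitimate, since everything in sight is bounded below), and then prove the sheaf case from scratch. Your ``rigidity'' step, which you flag as the main obstacle, is correct and is indeed the crux: an \'etale morphism $f^{-1}(Z_n)\to Z_n$ restricting to an isomorphism over $Z$ is an isomorphism, by topological invariance of the \'etale site, or even more cheaply because it is \'etale, surjective and universally injective (topologically it coincides with $f^{-1}(Z_{\red})\to Z_{\red}$), hence an open immersion, hence an isomorphism. Together with flat base change for the closed immersions $Z_n \inj X$ and the commutation of $f^*$ and $R^qf_*$ with the filtered colimit $\sF = \varinjlim_n \sF[\sI^n]$ on noetherian schemes, this amounts to an independent proof of the relevant case of \cite[Theorem~2.6.3]{TT} rather than a citation of it. Your route buys self-containedness and a transparent geometric explanation of excision (the infinitesimal neighbourhoods of $Z$ and of $Z\times_X Y$ are identified by $f$); the paper's route buys brevity at the scheme level at the price of the Cartan--Eilenberg/truncation machinery. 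Two small points you should make explicit in a final write-up: the counit argument needs that $\sI\sO_Y$ is the radical ideal of $|Z\times_X Y|$, which holds because \'etale pullback preserves reducedness; and the identifications of $H^n(Rf_*f^*E)$ and $H^n(f^*Rf_*F)$ coming from the degenerate spectral sequences must be checked to carry the unit and counit of complexes to their sheaf-level counterparts --- a standard edge-map compatibility, but it is the glue holding your reduction together.
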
   
\begin{proof} We first prove the case when $X$ and $Y$ are schemes.
Put $W = Z {\times}_X Y$ and let $E \in D^+_{qc}(X \ {\rm on} \ Z)$. 
For a Cartan-Eilenberg resolution 
$E \xrightarrow {\epsilon} {\sI}^{\bullet \bullet}$ of $E$, 
we let ${\tau}^{\le n}\left({\sI}^{\bullet \bullet}\right)$ be the double
complex obtained by taking the good truncation of 
${\sI}^{\bullet \bullet}$ in each row. Put ${\sI} = {Tot}\left(
{\sI}^{\bullet \bullet}\right)$ and 
${\sI}^{[\le n]} = {Tot}\left(  
{\tau}^{\le n}\left({\sI}^{\bullet \bullet}\right)\right)$ for $n \in \Z$.
Then we get for any $i \in \Z$,
\[
{\sI}^i = ~{\underset {{\alpha}+ {\beta} = i}{\bigoplus}}~
\left({\sI}^{\alpha, \beta}\right) = {{\sI}^{[\le n]}}^i\]
for all large $n$. In particular, we get
\begin{equation}\label{eqn:ex02}
~{\underset {n}{\varinjlim}}~ {\sI}^{[\le n]} \xrightarrow {\cong} {\sI}.
\end{equation}
Lemma~\ref{lem:shrik} implies that $f^*(E) \xrightarrow 
{f^*(\epsilon)} f^*\left({\sI}^{\bullet \bullet}\right)$ is a Cartan-Eilenberg 
resolution of $f^*(E)$.
Since $E$ is bounded below, the product total complex of $I^{\bullet \bullet}$
(resp. $f^*\left(I^{\bullet \bullet}\right)$) coincides with the
corresponding direct sum total complex.
In particular, we get that ${Tot}\left(f^*\left({\sI}^{\bullet \bullet}\right)
\right) \cong f^*({\sI})$. Similarly, $f^*\left({\tau}^{\le n} (E) \right)
\to f^*\left({\tau}^{\le n}\left({\sI}^{\bullet \bullet}\right)\right)$
is a Cartan-Eilenberg resolution and ${Tot}
\left(f^*\left({\tau}^{\le n}\left({\sI}^{\bullet \bullet}\right)\right)
\right) \cong f^*\left({Tot}\left({\tau}^{\le n}\left({\sI}^{\bullet \bullet}\right)
\right)\right)$.
We conclude from this and Theorem~\ref{thm:CER} that the following are the 
$K$-injective resolutions of the complexes on the left. 
\[
E \to {\sI},  \ \  {\tau}^{\le n}(E) \to {\sI}^{[\le n]},
\]
\[
f^*(E) \to f^*(\sI), \ {\rm and} \ f^*\left({\tau}^{\le n}(E)\right)
\to f^*\left({\sI}^{[\le n]}\right).
\]
This implies in particular that $Rf_* \circ f^*\left(E\right) \cong
f_* \circ f^*\left({\sI}\right)$ and $Rf_* \circ f^*\left(
{\tau}^{\le n}(E)\right) \cong f_* \circ f^*\left({\sI}^{[\le n]}\right)$.
Combining this with ~\ref{eqn:ex0} and  ~\ref{eqn:ex02}, we get
\begin{equation}\label{eqn:ex03}
\begin{array}{lll} 
Rf_* \circ f^*\left(E\right) & \cong & f_* \circ f^*\left({\sI}\right) \\
& \cong &  f_* \circ f^*\left(~{\underset {n}{\varinjlim}}~ {\sI}^{[\le n]}
\right) \\
& \cong & ~{\underset {n}{\varinjlim}}~ f_* \circ f^*\left(
{\sI}^{[\le n]}\right) \\
& \cong &  ~{\underset {n}{\varinjlim}}~ Rf_* \circ f^*\left(
{\tau}^{\le n}(E)\right),
\end{array}
\end{equation}
where the third isomorphism follows, for example, by the exact triangle
\begin{equation}\label{eqn:ex03*}
~{\underset {n}{\oplus}}~ {\sI}^{[\le n]}  \to 
~{\underset {n}{\oplus}}~ {\sI}^{[\le n]} \to 
~{\underset {n}{\varinjlim}}~ {\sI}^{[\le n]}
\end{equation}
and Proposition~\ref{prop:direct-image}.
Thus, we get a commutative diagram 
\[
\xymatrix{
~{\underset {n}{\varinjlim}}~ {\tau}^{\le n}(E) \ar[r] \ar[d]_{\cong} &
~{\underset {n}{\varinjlim}}~ Rf_* \circ f^*\left(
{\tau}^{\le n}(E)\right) \ar[d]^{\cong} \\
E \ar[r] & Rf_* \circ f^*\left(E\right),}
\]
where the left vertical map is isomorphism by ~\eqref{eqn:ex0} and the
right vertical map is isomorphism by ~\eqref{eqn:ex03}. Since each 
${\tau}^{\le n}(E)$ is a good truncation of $E$, it is in
$D^+_{qc}(X \ {\rm on} \ Z)$. Moreover, as ${\tau}^{\le n}(E)$ is a bounded 
complex, the top horizontal map above is an isomorphism by 
\cite[Theorem~2.6.3]{TT}. We now conclude that
\[E \xrightarrow {\cong}  Rf_* \circ f^*\left(E\right).\]

Next we prove the second assertion of the proposition for schemes.
So let $F \in D^+_{qc}(Y \ {\rm on} \ W)$. Choosing a Cartan-Eilenberg 
resolution $F \xrightarrow {\epsilon} {\sI}^{\bullet \bullet}$, we have as
in the above case, the commutative diagram of isomorphisms
\[
\xymatrix{
~{\underset {n}{\varinjlim}}~ {\tau}^{\le n}(F) \ar[r]^{\hspace*{.7cm} \cong} 
\ar[d] & F \ar[d] \\
~{\underset {n}{\varinjlim}}~ {\sI}^{[\le n]} \ar[r]^{\hspace*{.7cm} \cong} &
{\sI},}
\]
where the complexes on the bottom are the $K$-injective resolutions of the
corresponding complexes on the top.
In particular, we get  
\begin{equation}\label{eqn:ex03}
\begin{array}{lll} 
Rf_*(F) & \cong & f_*\left({\sI}\right) \\
& \cong & ~{\underset {n}{\varinjlim}}~ f_*\left({\sI}^{[\le n]}\right) \\
& \cong & ~{\underset {n}{\varinjlim}}~ Rf_*\left({\tau}^{\le n}(F)\right),
\end{array}
\end{equation}
where the second isomorphism follows as before by ~\eqref{eqn:ex03*}.
Applying $f^*$ to above, we get
\begin{equation}\label{eqn:ex04}
~{\underset {n}{\varinjlim}}~ f^* \circ Rf_*\left({\tau}^{\le n}(F)\right) 
\xrightarrow {\cong}  f^* \left(~{\underset {n}{\varinjlim}}~
Rf_*\left({\tau}^{\le n}(F)\right)\right) \xrightarrow {\cong}
f^* \circ Rf_*(F).
\end{equation}
Thus we get a commutative diagram 
\[
\xymatrix{
~{\underset {n}{\varinjlim}}~ f^* \circ Rf_*\left({\tau}^{\le n}(F)\right)
\ar[r] \ar[d]_{\cong} &  ~{\underset {n}{\varinjlim}}~ {\tau}^{\le n}(F)
\ar[d]^{\cong} \\
f^* \circ Rf_*(F) \ar[r] & F,}
\]
where the left vertical arrow is an isomorphism by ~\eqref{eqn:ex04} and 
the right vertical arrow is an isomorphism by ~\eqref{eqn:ex0}.
Since each ${\tau}^{\le n}(F)$ is a bounded complex in 
$D^+_{qc}(Y \ {\rm on} \ W)$, the top horizontal arrow is an isomorphism by 
\cite[Theorem~2.6.3]{TT}.
We conclude that 
\[f^* \circ Rf_*(F) \xrightarrow {\cong} F.\]
This proves the proposition for noetherian schemes.

To complete the proof of the proposition, let $X$ be a stack and let
$u : U \to X$ be an {\'e}tale atlas for $X$ and consider the Cartesian
diagram ~\ref{eqn:atlas}. 
Put $T = u^{-1}(Z)$ and $P = v^{-1}(W)$. Since we have already seen that $f$ is 
strongly representable and \'etale, we see that $g$ is an \'etale morphism of 
noetherian and separated schemes which is an isomorphism over $T$.
It suffices to show that the natural maps
\[
u^*(E) \to u^*\left(Rf_*\circ f^*(E)\right) \ {\rm and} \
{v}^*\left(f^*\circ Rf_*(F)\right) \to {v}^*(F)
\]
are isomorphisms for $E \in  D^+_{qc}(X \ {\rm on} \ Z)$ and 
$F \in  D^+_{qc}(Y \ {\rm on} \ W)$. However, Corollary~\ref{cor:K-injAD*}
implies that 
$u^*\left(Rf_*\circ f^*(E)\right) \cong Rg_*\circ {v}^*\circ f^*(E)
\cong Rg_*\circ g^*\circ {u}^*(E)$, which in turn is isomorphic to
${u}^*(E)$ from the case of schemes proved above 
since $U$ is a noetherian scheme and ${u}^*(E) \in 
D^+_{qc}(U \ {\rm on} \ T)$.
Similarly, we have 
\[{v}^*\left(f^* \circ Rf_*(F)\right) \cong g^*\circ {u}^* \circ Rf_*(F)
\cong g^*\circ Rg_*\circ {v}^*(F),\]
again by Corollary~\ref{cor:K-injAD*}. The last term is isomorphic to
${v}^*(F)$, again from the case of schemes 
since $V$ is a noetherian scheme and $v^*(F) \in D^+_{qc}(V \ {\rm on} \ P)$.
\end{proof}
\begin{lem}\label{lem:EXC2}
Let $f : Y \to X$ be a representable and \'etale morphism of stacks.
Let $Z \overset{i}{\inj} X$ be a closed
substack such that the restriction $f : Z {\times}_X Y \to Z$ is an
isomorphism. Then for any $E \in D_{qc}(X \ {\rm on} \ Z)$, the natural map 
$E \to Rf_* \circ f^*(E)$ is an isomorphism.  
\end{lem}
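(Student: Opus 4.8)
The plan is to reduce the unbounded statement to the bounded-below case already settled in Proposition~\ref{prop:EXC1} by writing $E$ as a homotopy inverse limit of its good truncations and pushing the unit isomorphism through this limit. As in Corollary~\ref{cor:restriction}, I first observe that $f$, being representable and \'etale, is strongly representable (by Proposition~\ref{prop:QCS} and \cite[Corollary~II.6.17]{Knutson}), so that Proposition~\ref{prop:direct-image} and Corollary~\ref{cor:K-injAD*} are available for $f$.

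First I would record the behaviour of the truncation tower. For each $n$ the good truncation $\tau^{\ge n}(E)$ lies in $D^+_{qc}(X \ {\rm on} \ Z)$: it is bounded below by construction, and since ${\sH}^i(\tau^{\ge n}(E)) = {\sH}^i(E)$ for $i \ge n$ and vanishes otherwise, it is acyclic over $U$ because $E$ is. The canonical maps $E \to \tau^{\ge n}(E)$ assemble, via \eqref{eqn:ex1}, into an isomorphism $E \xrightarrow{\cong} \varprojlim_n \tau^{\ge n}(E)$; because in each fixed degree the tower $\{\tau^{\ge n}(E)\}$ is eventually constant as $n \to -\infty$, this strict inverse limit carries no $\varprojlim^1$ obstruction and already computes the homotopy limit, so $E \cong \holim_n \tau^{\ge n}(E)$. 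The functor $f^*$ is exact, hence commutes with good truncations, giving $f^*\tau^{\ge n}(E) = \tau^{\ge n}(f^*E)$; and by Lemma~\ref{lem:shrik} it admits an exact left adjoint $f_!$, so it preserves limits and in particular $f^*E = \holim_n \tau^{\ge n}(f^*E)$. Proposition~\ref{prop:EXC1} then supplies, for every $n$, a unit isomorphism $\tau^{\ge n}(E) \xrightarrow{\cong} Rf_* f^* \tau^{\ge n}(E) = Rf_* \tau^{\ge n}(f^*E)$.

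The heart of the argument, and the step I expect to be the main obstacle, is commuting $Rf_*$ past the homotopy limit. Here I would use that $(f^*, Rf_*)$ is an adjoint pair on the derived categories (as $f^* = Lf^*$ is exact), so that $Rf_*$ is a right adjoint and preserves products; feeding the Milnor homotopy-limit triangle $\holim_n A_n \to \prod_n A_n \xrightarrow{1 - s} \prod_n A_n$ for the tower $A_n = \tau^{\ge n}(f^*E)$, with $s$ the shift, through the triangulated functor $Rf_*$ then identifies $Rf_*\holim_n A_n$ with $\holim_n Rf_* A_n$. Both towers are degreewise eventually constant, so all the homotopy limits in sight reduce to strict ones and stay inside $D_{qc}$, which is what keeps the manipulation honest and matches the role played by the telescope triangle \eqref{eqn:ex03*} and Proposition~\ref{prop:direct-image} in the bounded-below case. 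Combining the isomorphisms above yields $Rf_* f^*E = Rf_*\holim_n \tau^{\ge n}(f^*E) \cong \holim_n Rf_*\tau^{\ge n}(f^*E) \cong \holim_n \tau^{\ge n}(E) \cong E$. Finally I would verify that this chain is induced by the unit $E \to Rf_* f^*E$ itself: naturality of the unit gives, for each $n$, a commuting square relating $E \to \tau^{\ge n}(E)$ to $Rf_*f^*E \to Rf_*f^*\tau^{\ge n}(E)$, and passing to the limit over $n$—where the top row, the bottom row, and every truncated vertical map are isomorphisms—forces $E \to Rf_* f^*E$ to be an isomorphism as well.
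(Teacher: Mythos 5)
You follow the same skeleton as the paper's proof: write $E$ as the inverse limit of its good truncations $\tau^{\ge n}(E)$, apply Proposition~\ref{prop:EXC1} to each truncation, and then commute $Rf_*\circ f^*$ past the limit. The divergence is entirely in how that commutation is implemented. The paper never leaves the chain level: it takes a Cartan--Eilenberg resolution of $f^*(E)$, uses Theorem~\ref{thm:CER} to produce compatible $K$-injective resolutions $\sI^{[\ge n]}$ of the truncations with $\sI \cong \varprojlim_n \sI^{[\ge n]}$, and commutes $f_*$ with this strict degreewise limit using adjointness and \cite[Lemma~2.3]{Krishna}. You instead argue abstractly with Milnor triangles and the fact that $Rf_*$, being a right adjoint of the derived adjunction, preserves products. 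That abstraction introduces a genuine gap at your final step. In a triangulated category one cannot ``pass to the limit over $n$'' in a family of commuting squares: homotopy limits are not functorial, and a map into a homotopy limit compatible with all projections is not unique. Concretely, TR3 and the levelwise isomorphisms $\eta_n\colon \tau^{\ge n}(E) \to Rf_*f^*\tau^{\ge n}(E)$ produce \emph{some} isomorphism $w\colon E \to Rf_*f^*(E)$ compatible with the two Milnor triangles, and naturality of the unit shows that $\eta_E$ and $w$ agree after composition with $Rf_*f^*(E) \to \prod_n Rf_*f^*\tau^{\ge n}(E)$; but then $\eta_E - w$ only factors through $\bigl(\prod_n Rf_*f^*\tau^{\ge n}(E)\bigr)[-1]$, and nothing forces this error term to vanish or $\eta_E = w + (\mathrm{error})$ to remain invertible. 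So ``three of the four maps are isomorphisms'' does not close the argument in your setting, whereas in the paper's setting the analogous square commutes on the nose because every object and every map in it is a strict limit of actual complexes.

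There is a second, more hidden problem in the step ``feeding the Milnor triangle through $Rf_*$.'' The triangle exhibiting $E$ (and $f^*E$) as a homotopy limit of truncations is built from \emph{chain-level} products, which is exactly where your degreewise-eventual-constancy observation enters; but the statement that $Rf_*$ preserves products concerns \emph{derived} (categorical) products in the derived category. The two agree when the factors are $K$-injective, or when the module categories have exact countable products, but sheaf categories such as $Mod(Y)$ need not satisfy the latter (this is the failure of left-completeness of derived categories of Grothendieck categories, as in Neeman's counterexample), and your towers $\tau^{\ge n}(f^*E)$ are not $K$-injective. (A related smaller imprecision: the infinite products in your Milnor triangles need not lie in $D_{qc}$, so the manipulation must be carried out in $D(X)$ and $D(Y)$; this part is harmless.) The paper's seemingly pedestrian Cartan--Eilenberg route is precisely the repair of both defects: Theorem~\ref{thm:CER} supplies a tower of $K$-injective complexes $\sI^{[\ge n]}$, degreewise split surjective, whose strict inverse limit is again a $K$-injective resolution, so all limits are taken honestly in the category of complexes, $Rf_*$ is computed by applying $f_*$ to these models, and the final comparison is a commutative diagram of genuine chain maps. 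To complete your argument you would have to reintroduce essentially this machinery (compatible $K$-injective models for the whole tower), at which point it becomes the paper's proof.
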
    
\begin{proof}
The isomorphism in ~\eqref{eqn:ex1} and Lemma~\ref{lem:shrik} imply that 
there are isomorphisms
\begin{equation}\label{eqn:ex11*}
E \xrightarrow {\cong} ~{\underset {n}{\varprojlim}}~ {\tau}^{\ge n} E, \ \
f^*(E) \xrightarrow {\cong}
~{\underset {n}{\varprojlim}}~ f^*\left({\tau}^{\ge n} E\right).
\end{equation}
Let $f^*(E) \to {\sI}^{\bullet \bullet}$ be a Cartan-Eilenberg resolution.
It follows from Theorem~\ref{thm:CER} that
$f^*(E) \to {\sI} = {Tot}\left({\sI}^{\bullet \bullet}\right)$ 
is a $K$-injective resolution of $f^*(E)$. Furthermore,
$f^*\left({\tau}^{\ge n} E\right) \to {\tau}^{\ge n} 
\left({\sI}^{\bullet \bullet}\right)$ is a Cartan-Eilenberg resolution, and 
hence $f^*\left({\tau}^{\ge n} E\right) \to {\sI}^{[\ge n]}
= {Tot}\left({\tau}^{\ge n} \left({\sI}^{\bullet \bullet}\right)\right)$
is a $K$-injective resolution for each $n$. One also checks that
${\sI} \xrightarrow {\cong} ~{\underset {n}{\varprojlim}}~ {\sI}^{[\ge n]}$.  
In particular, we get 
\begin{equation}\label{eqn:ex12*}
Rf_* \circ f^*(E) \cong f_*({\sI}) \cong f_*\left(
~{\underset {n}{\varprojlim}}~ {\sI}^{[\ge n]}\right).
\end{equation}
We claim that the natural map $f_*\left(
~{\underset {n}{\varprojlim}}~ {\sI}^{[\ge n]}\right) \to
~{\underset {n}{\varprojlim}}~ f_*\left({\sI}^{[\ge n]}\right)$ is an
isomorphism. To see this, we note first from Lemma~\ref{lem:injective}
that $Mod(X)$ and $Mod(Y)$ are Grothendieck categories. 
Hence we can use \cite[Lemma~2.3]{Krishna} to get
\[
{\left(f_*\left(~{\underset {n}{\varprojlim}}~ 
{\sI}^{[\ge n]}\right)\right)}^i = 
f_*\left({\left({~{\underset {n}{\varprojlim}}~} 
{\sI}^{[\ge n]}\right)}^i\right) = 
f_*\left(~{\underset {n}{\varprojlim}}~ {{\sI}^{[\ge n]}}^i\right).\]
On the other hand, since $f_*$ has a left adjoint, we have
\[
f_*\left(~{\underset {n}{\varprojlim}}~ {{\sI}^{[\ge n]}}^i\right) =
~{\underset {n}{\varprojlim}}~ f_*\left({{\sI}^{[\ge n]}}^i\right) =
~{\underset {n}{\varprojlim}}~ 
{\left(f_*\left({\sI}^{[\ge n]}\right)\right)}^i =
{\left(~{\underset {n}{\varprojlim}}~ 
f_*\left({\sI}^{[\ge n]}\right)\right)}^i,\]
where the last isomorphism follows again from \cite[Lemma~2.3]{Krishna}. 
This proves the claim. 

Using this claim in ~\eqref{eqn:ex12*}, we obtain
\begin{equation}\label{eqn:ex13*}
Rf_* \circ f^*(E) \cong 
~{\underset {n}{\varprojlim}}~ f_*\left({\sI}^{[\ge n]}\right)  \cong 
~{\underset {n}{\varprojlim}}~ Rf_* \circ f^*\left({\tau}^{\ge n}(E)\right).
\end{equation}
Finally, we consider the following commutative diagram.
\[
\xymatrix{
E \ar[r] \ar[d]_{\cong} & Rf_* \circ f^*(E) \ar[d] \\
~{\underset {n}{\varprojlim}}~ {\tau}^{\ge n}(E) \ar[r] &
~{\underset {n}{\varprojlim}}~ Rf_* \circ f^*\left({\tau}^{\ge n}(E)\right)}
\]
The left vertical arrow is an isomorphism by ~\eqref{eqn:ex11*} and the right
vertical arrow is an isomorphism by ~\eqref{eqn:ex13*}. Since 
${\tau}^{\ge n}(E)$ is a good truncation of $E$, we see that 
${\tau}^{\ge n}(E) \in D^{+}_{qc}(X \ {\rm on} \ Z)$ for each $n$ and 
hence the bottom horizontal arrow is an isomorphism by 
Proposition~\ref{prop:EXC1}.
We now conclude that $E \to  Rf_* \circ f^*(E)$ is an isomorphism.
\end{proof}
\begin{thm}\label{thm:EXCM}
Let $f : Y \to X$ be a representable and \'etale morphism of stacks.
Let $Z \overset{i}{\inj} X$ be a closed
substack such that the restriction $f : Z {\times}_X Y \to Z$ is an
isomorphism. Then the pullback $f^*: D_{qc}(X \ {\rm on} \ Z) \to
D_{qc}(Y \ {\rm on} \ Z{\times}_X Y)$ is an equivalence of triangulated
categories. 
\end{thm}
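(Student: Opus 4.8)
The plan is to exhibit $f^*$ as one half of an adjoint equivalence, leveraging the adjunction $(f^*, Rf_*)$ together with the two isomorphism statements already in hand. Write $W = Z \times_X Y$, let $j : U \inj X$ be the open complement of $Z$, and let $j' : Y \setminus W \inj Y$ be its preimage, so that $Y \setminus W = f^{-1}(U)$ and the restriction $f_U : Y\setminus W \to U$ fits into a Cartesian square of \'etale (open) immersions. First I would record that $Rf_*$ carries $D_{qc}(Y \ {\rm on}\ W)$ into $D_{qc}(X \ {\rm on}\ Z)$: for $F$ supported on $W$, Corollary~\ref{cor:K-injAD*} gives $j^* Rf_*(F) \cong R(f_U)_*\,j'^*(F)$, and $j'^*(F) = 0$, so $Rf_*(F)$ is acyclic over $U$. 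Thus $f^* \dashv Rf_*$ restricts to an adjunction between the two categories with support.

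Next I would show $f^*$ is fully faithful. Since $D_{qc}(X \ {\rm on}\ Z)$ and $D_{qc}(Y \ {\rm on}\ W)$ are full subcategories, for $E, E' \in D_{qc}(X \ {\rm on}\ Z)$ the adjunction yields
\[
\Hom(f^* E, f^* E') \cong \Hom(E, Rf_* f^* E') \cong \Hom(E, E'),
\]
the last isomorphism being Lemma~\ref{lem:EXC2}, which asserts that the unit $E \to Rf_* f^* E$ is an isomorphism. This composite is the map induced by $f^*$, so $f^*$ is fully faithful and its essential image is a full triangulated subcategory. It therefore remains only to prove essential surjectivity, equivalently that the counit $f^* Rf_*(F) \to F$ is an isomorphism for every $F \in D_{qc}(Y \ {\rm on}\ W)$.

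For the counit I would extend Proposition~\ref{prop:EXC1} from $D^+_{qc}$ to the whole category, mirroring the way Lemma~\ref{lem:EXC2} extends the unit. Using \eqref{eqn:ex1} write $F \cong \varprojlim_n \tau^{\ge n} F$, with each $\tau^{\ge n} F \in D^+_{qc}(Y \ {\rm on}\ W)$. Choosing a Cartan--Eilenberg resolution $F \to \sI^{\bullet\bullet}$ and setting $\sI = \Tot(\sI^{\bullet\bullet})$ and $\sI^{[\ge n]} = \Tot(\tau^{\ge n}\sI^{\bullet\bullet})$, Theorem~\ref{thm:CER} makes these $K$-injective resolutions of $F$ and of $\tau^{\ge n} F$, and $\sI \cong \varprojlim_n \sI^{[\ge n]}$. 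Since $f_*$ has the exact left adjoint $f^*$, it commutes with the degreewise inverse limits, so as in Lemma~\ref{lem:EXC2} (invoking \cite[Lemma~2.3]{Krishna}) one gets $Rf_*(F) \cong \varprojlim_n Rf_*(\tau^{\ge n} F)$. Because $f$ is \'etale, $f^*$ preserves limits by Lemma~\ref{lem:shrik}, so $f^* Rf_*(F) \cong \varprojlim_n f^* Rf_*(\tau^{\ge n} F)$. By Proposition~\ref{prop:EXC1} each counit $f^* Rf_*(\tau^{\ge n} F) \to \tau^{\ge n} F$ is an isomorphism, hence so is their inverse limit, which is the counit at $F$ after the identification $\varprojlim_n \tau^{\ge n} F \cong F$. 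Combined with full faithfulness, this makes $f^*$ an adjoint equivalence.

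The step I expect to be the main obstacle is the derived commutation $Rf_*(F) \cong \varprojlim_n Rf_*(\tau^{\ge n} F)$: one must verify that the total complex of the Cartan--Eilenberg resolution genuinely computes the inverse limit, that $f_*$ passes through it degreewise, and that the comparison isomorphisms are compatible with the counit maps, so that the limit of the counits is the counit at $F$. An alternative I would keep in reserve is to prove conservativity of $Rf_*$ directly: completing the counit to a triangle $f^* Rf_*(F) \to F \to C$ and applying $Rf_*$ shows $Rf_*(C) = 0$ via the triangle identities and Lemma~\ref{lem:EXC2}; then $C$ is acyclic over $Y \setminus W$, while flat base change along the closed immersion $i : Z \inj X$ (valid since $f$ is flat) gives $i^* Rf_*(C) \cong R(f|_W)_*\,i'^*(C)$, which identifies with $i'^*(C)$ as $f|_W$ is an isomorphism, forcing $i'^*(C) = 0$ and hence $C = 0$. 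This avoids the limit computation but trades it for a base-change statement along a closed immersion not otherwise developed in the paper.
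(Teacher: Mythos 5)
Your proposal is correct, and its first half (full faithfulness via the adjunction $(f^*, Rf_*)$ together with the unit isomorphism of Lemma~\ref{lem:EXC2}) is exactly the paper's argument; where you genuinely diverge is essential surjectivity. The paper never shows that the counit $f^*Rf_*(F) \to F$ is an isomorphism for unbounded $F$: it first treats bounded-above $F$ (on schemes by \cite[Theorem~2.6.3]{TT}, on stacks by re-running the atlas argument of Proposition~\ref{prop:EXC1}), and then for general $F$ constructs a preimage $E = \varinjlim_n Rf_*\left({\tau}^{\le n}(F)\right)$ using the homotopy-colimit triangle ~\eqref{eqn:ex03*} and the fact that $Rf_*$ commutes with direct sums (Proposition~\ref{prop:direct-image}), finally checking $E \in D_{qc}(X \ {\rm on} \ Z)$ on cohomology sheaves, which commute with filtered colimits. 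You instead run the truncation argument in the opposite direction: inverse limits of the bounded-below truncations ${\tau}^{\ge n}(F)$, mirroring precisely how Lemma~\ref{lem:EXC2} extends the unit isomorphism from $D^{+}_{qc}$ to $D_{qc}$. This buys a cleaner conclusion --- $(f^*, Rf_*)$ is an adjoint equivalence, with $Rf_*$ landing in $D_{qc}(X \ {\rm on} \ Z)$ by the open base change of Corollary~\ref{cor:K-injAD*} --- and it reuses the already-proved counit statement of Proposition~\ref{prop:EXC1} instead of a fresh bounded-above case. The price is that the delicate chain-level commutations (Theorem~\ref{thm:CER}, \cite[Lemma~2.3]{Krishna}, preservation of degreewise limits by $f_*$ and by $f^*$, and the implicit fact that an inverse limit of quasi-isomorphisms along degreewise split surjections is again a quasi-isomorphism) must all be redone for the counit; the paper's colimit route avoids this because filtered colimits are exact in $Mod(X)$, whereas inverse limits are not. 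Since these manipulations are exactly the ones the paper itself performs in Lemma~\ref{lem:EXC2}, your argument is sound by the paper's own standards, and the step you flag as the ``main obstacle'' is real but no worse than what Lemma~\ref{lem:EXC2} already commits to. (Your reserve alternative via conservativity of $Rf_*$ would indeed require a base-change isomorphism along the closed immersion $i : Z \inj X$ that the paper does not establish --- Corollary~\ref{cor:K-injAD*} applies only to \'etale maps --- so the inverse-limit route is the right primary choice.)
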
    
\begin{proof}
Before we begin the proof, one should observe that $f^*$ maps
$D_{qc}(X \ {\rm on} \ Z)$ into $D_{qc}(Y \ {\rm on} \ Z{\times}_X Y)$ 
because $f$ is \'etale. Put $W = Z {\times}_X Y$. We first show that $f^*$ 
induces a full embedding of $D_{qc}(X \ {\rm on} \ Z)$ into 
$D_{qc}(Y \ {\rm on} \ W)$. In fact, we will show that the former is a 
reflexive full subcategory of the latter.

To show this, let $E, E' \in D_{qc}(X \ {\rm on} \ Z)$. We have then
\[
\begin{array}{llll}
{\rm Hom}_{D_{qc}(X \ {\rm on} \ Z)} \left(E, E'\right) & \cong & 
{\rm Hom}_{D_Z(qc/X)} \left(E, Rf_* \circ f^*(E')\right) &
({\rm by \ Lemma~\ref{lem:EXC2}}) \\
& \cong &  {\rm Hom}_{D_{qc}(X)} \left(E, Rf_* \circ f^*(E')\right) & \\
& \cong & {\rm Hom}_{D_{qc}(Y)} \left(f^*(E), f^*(E')\right) & \\
& \cong & {\rm Hom}_{D_{qc}(Y \ {\rm on} \ W)} \left(f^*(E), f^*(E')\right), &
\end{array}
\] 
where the third isomorphism follows from \cite[Lemma~3.3]{Krishna}.
Hence $f^*$ is full and faithful. Lemma~\ref{lem:EXC2} now implies 
that $D_{qc}(X \ {\rm on} \ Z)$ is a reflexive full triangulated subcategory of
$D_{qc}(Y \ {\rm on} \ W)$. To prove the theorem, it suffices now to show that
$f^*$ is surjective on objects. 

To prove this, let $F \in D_{qc}(Y \ {\rm on} \ W)$.
We first consider the case when $F$ is bounded above. If $X$ is a scheme,
then we have seen before that $Y$ is also a scheme and the assertion then
follows directly from the stronger result in \cite[Theorem~2.6.3]{TT}
that the map $f^*Rf_*(F) \to F$ is an isomorphism. If $X$ is a stack,
we can choose an atlas $U \xrightarrow{u} X$ and note that $U {\times}_X Y$
is a scheme since $f$ is representable and \'etale and hence strongly
representable. Now we argue as in the proof of Proposition~\ref{prop:EXC1}
to show that $f^*Rf_*(F) \to F$ is an isomorphism.  It follows moreover
from Corollary~\ref{cor:K-injAD*} that $Rf_*(F) \in D_{qc}(X \ {\rm on} \ Z)$.

To complete the proof in general, we write 
\begin{equation}\label{eqn:ex24}
F = ~{\underset {n}{\varinjlim}}~ {\tau}^{\le n}(F).
\end{equation}
We get natural maps
\[
f^*\left(~{\underset {n}{\varinjlim}}~ 
Rf_*\left({\tau}^{\le n}(F)\right)\right) 
\cong ~{\underset {n}{\varinjlim}}~
f^* \circ Rf_*\left({\tau}^{\le n}(F)\right) \to
~{\underset {n}{\varinjlim}}~ {\tau}^{\le n}(F) \to F,
\]
where the first isomorphism follows from ~\eqref{eqn:ex03*}.
Since each ${\tau}^{\le n}(F) \in D^{-}_{qc}(Y \ {\rm on} \ W)$, we have just 
shown that the second map is also an isomorphism, and the last map is an 
isomorphism by ~\ref{eqn:ex24}. \\
Putting $E = ~{\underset {n}{\varinjlim}}~ 
Rf_*\left({\tau}^{\le n}(F)\right)$, we see that $f^*(E) \cong F$. Thus,
it suffices to prove that $E \in D_{qc}(X \ {\rm on} \ Z)$ to finish the proof 
of the theorem. 

Let $T = X - Z$ and let $j: T \inj X$ be the inclusion morphism.
It suffices to show that $j^*(E) = 0$. To show this,
we have for any $i \in \Z$,
\[
\begin{array}{lll}
H^i\left(j^*(E)\right) & \cong & j^*\left(H^i(E)\right) \\
& \cong & j^*\left(H^i\left(~{\underset {n}{\varinjlim}}~ 
Rf_*\left({\tau}^{\le n}(F)\right)\right)\right) \\
& \cong & ~{\underset {n}{\varinjlim}}~ j^* \circ H^i 
\left(Rf_*\left({\tau}^{\le n}(F)\right)\right) \\
& \cong & ~{\underset {n}{\varinjlim}}~ 
H^i\left(j^* \circ Rf_*\left({\tau}^{\le n}(F)\right)\right) \\
& \cong & 0,
\end{array}
\]
where the last term is zero since ${\tau}^{\le n}(F) \in 
D^{-}_{qc}(Y \ {\rm on} \ W)$ and hence 
$Rf_*\left({\tau}^{\le n}(F)\right) \in D^{-}_{qc}(X \ {\rm on} \ Z)$
as shown above. This completes the proof of the theorem.
\end{proof}
\begin{thm}[\bf{Excision}]\label{thm:Excision}
Let $f : Y \to X$ be a representable and \'etale morphism of tame stacks which
admit coarse moduli schemes. Let $Z \overset{i}{\inj} X$ be a closed
substack such that the restriction $f : Z {\times}_X Y \to Z$ is an
isomorphism. Then the map $f^*: C(X) \to C(Y)$ induces an equivalence of
spectra $K(X \ {\rm on} \ Z) \xrightarrow{\cong} 
K(Y \ {\rm on} \ Z{\times}_XY)$.
\end{thm}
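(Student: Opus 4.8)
The plan is to deduce this $K$-theory statement from the equivalence of derived categories already established in Theorem~\ref{thm:EXCM}, using the identification of perfect complexes with compact objects from Theorem~\ref{thm:Perf-comp} together with the Morita invariance of non-connective $K$-theory. Set $W = Z \times_X Y$. By Theorem~\ref{thm:EXCM}, the pullback
\[
f^* : D_{qc}(X \ {\rm on} \ Z) \xrightarrow{\ \cong\ } D_{qc}(Y \ {\rm on} \ W)
\]
is an equivalence of triangulated categories, and I would take this as the starting point.

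The first step is to observe that compactness of an object is an intrinsic property of a triangulated category admitting arbitrary direct sums: it is phrased entirely in terms of $\Hom$ commuting with coproducts, both of which any triangulated equivalence preserves. Hence $f^*$ carries compact objects to compact objects and restricts to an equivalence $D_{qc}(X \ {\rm on} \ Z)^c \xrightarrow{\cong} D_{qc}(Y \ {\rm on} \ W)^c$ on the full subcategories of compact objects. Next I would invoke the hypotheses that $X$ and $Y$ are tame and admit coarse moduli schemes. Under these hypotheses, Theorem~\ref{thm:Perf-comp} identifies the compact objects of $D_{qc}(X \ {\rm on} \ Z)$ (resp. of $D_{qc}(Y \ {\rm on} \ W)$) precisely with the perfect complexes acyclic on the complement of $Z$ (resp. of $W$); that is, $D_{qc}(X \ {\rm on} \ Z)^c = D_Z(Perf/X)$ and $D_{qc}(Y \ {\rm on} \ W)^c = D_W(Perf/Y)$. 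Combining this with the previous step, $f^*$ induces an equivalence of triangulated categories $D_Z(Perf/X) \xrightarrow{\cong} D_W(Perf/Y)$.

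The final step passes from this equivalence of homotopy categories to an equivalence of $K$-theory spectra. The key point is that the equivalence above is induced by the dg-functor $f^* : C_Z(Perf/X) \to C_W(Perf/Y)$, and both homotopy categories are idempotent complete, since perfect complexes form a thick subcategory (equivalently, the compact objects of a compactly generated triangulated category are closed under retracts). Consequently $f^*$ is a Morita equivalence of dg-categories. Since the non-connective $K$-theory of dg-categories is a Morita invariant \cite{Cisinski}, I would conclude that $K(X \ {\rm on} \ Z) \xrightarrow{\cong} K(Y \ {\rm on} \ W)$ is an equivalence of spectra, which is the assertion.

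The genuine mathematical content here—the derived equivalence and the coincidence of perfect with compact objects—is already contained in Theorems~\ref{thm:EXCM} and~\ref{thm:Perf-comp}, so this last statement is largely a matter of assembly. The only care needed, and the place where an inattentive argument could go wrong, is the bookkeeping ensuring that the triangulated equivalence of Theorem~\ref{thm:EXCM} is genuinely realized at the level of the dg-enhancements by the dg-functor $f^*$, so that Morita invariance of $K$-theory may legitimately be applied. Because $f^*$ is itself the dg-functor producing the equivalence, I expect this to be essentially formal; the main point to keep track of is to phrase each assertion at the dg-level rather than merely at the level of the underlying triangulated categories.
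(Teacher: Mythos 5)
Your proposal is correct and follows essentially the same route as the paper's own proof: invoke Theorem~\ref{thm:EXCM} for the equivalence $D_{qc}(X \ {\rm on} \ Z) \xrightarrow{\cong} D_{qc}(Y \ {\rm on} \ W)$, use Theorem~\ref{thm:Perf-comp} (with tameness and coarse moduli schemes) to identify compact objects with perfect complexes so the equivalence restricts to $D_Z(Perf/X) \xrightarrow{\cong} D_W(Perf/Y)$, and then pass to $K$-theory via invariance of the non-connective $K$-theory of dg-categories under dg-functors inducing derived equivalences. Your version is in fact slightly more explicit than the paper's at two points --- using that compactness is intrinsic to a triangulated equivalence, and spelling out the Morita-invariance step that the paper leaves implicit --- but these are refinements of bookkeeping, not a different argument.
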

\begin{proof} If we let $W = Z {\times}_X Y$, then $f^*$ clearly induces
the morphism of dg-categories $f^*: C_Z(Perf/X) \to C_W(Perf/Y)$. To show
that the induced morphism of the $K$-theory spectra is a homotopy equivalence,
it suffices to show that the map $f^*:D_Z(Perf/X) \to D_W(Perf/Y)$ is
an equivalence of the triangulated categories. Suppose we show that
\begin{equation}\label{eqn:Excision1}
f^*:D_{qc}(X \ {\rm on} \ Z) \to D_{qc}(Y \ {\rm on} \ W)
\end{equation}
 is an equivalence. Since $f$ is \'etale, $f^*$ takes perfect complexes
which are acyclic on the complement of $Z$ to perfect complexes on $Y$
which are acyclic on the complement of $W$. It follows from
Theorem~\ref{thm:Perf-comp} that $f^*$ takes compact objects of
$D_{qc}(X \ {\rm on} \ Z)$ into the compact objects of
$D_{qc}(Y \ {\rm on} \ W)$.
The equivalence in ~\eqref{eqn:Excision1} will then restrict to an equivalence
of the corresponding full subcategories of compact objects. Another
application of Theorem~\ref{thm:Perf-comp} will show that $f^*$ induces
an equivalence between the derived categories of perfect complexes.
Thus, we only need to show ~\eqref{eqn:Excision1}. But this is shown
in Theorem~\ref{thm:EXCM}.
\end{proof} 
\begin{cor}[\bf{Mayer-Vietoris}]\label{cor:MV}
Let $f : Y \to X$ be a representable and \'etale morphism of tame stacks which
admit coarse moduli schemes. Let 
\begin{equation}\label{eqn:MV1}
\xymatrix@C.8pc{
V \ar[r] \ar[d]_{g} & Y \ar[d]^{f} \\
U \ar[r] & X}
\end{equation}
be a Cartesian square such that $U \inj X$ is an open substack and $f$ is an
isomorphism over the complement of $U$. Then, this induces a homotopy
Cartesian square of $K$-theory spectra
\begin{equation}\label{eqn:MV2}
\xymatrix@C.8pc{
K(X) \ar[r] \ar[d]_{f^*} & K(U) \ar[d]^{g^*} \\
K(Y) \ar[r] & K(V).}
\end{equation}
\end{cor}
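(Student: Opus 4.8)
The plan is to deduce the Mayer-Vietoris square formally from the localization theorem (Theorem~\ref{thm:Localization}) and the excision theorem (Theorem~\ref{thm:Excision}), in exactly the manner that Thomason--Trobaugh obtain Mayer-Vietoris for schemes from their localization and excision statements. First I would let $Z \hookrightarrow X$ be the closed substack complementary to the open substack $U \hookrightarrow X$ (its scheme structure is irrelevant, since $K(X \ {\rm on} \ Z)$ depends only on the open $U = X \setminus Z$), and set $T = Z {\times}_X Y$. Because the square ~\eqref{eqn:MV1} is Cartesian with $V = U {\times}_X Y$ the pullback of the open $U$, the closed complement of $V$ in $Y$ is precisely $T$, and the hypothesis that $f$ is an isomorphism over the complement of $U$ says exactly that $f : T \to Z$ is an isomorphism. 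Thus the pair $(f,Z)$ satisfies the hypotheses of Theorem~\ref{thm:Excision}, which supplies an equivalence of non-connective spectra
\[
f^* : K(X \ {\rm on} \ Z) \xrightarrow{\cong} K(Y \ {\rm on} \ T).
\]

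Next I would apply Theorem~\ref{thm:Localization} to both $X$ and $Y$, which are tame stacks admitting coarse moduli schemes by hypothesis. With $U \hookrightarrow X$ and $V \hookrightarrow Y$ the open substacks and $Z$, $T$ their complements, this yields two homotopy fibration sequences $K(X \ {\rm on} \ Z) \to K(X) \to K(U)$ and $K(Y \ {\rm on} \ T) \to K(Y) \to K(V)$. Since the representable \'etale pullback $f^*$ restricts over the open parts to $g^*$ and carries complexes supported on $Z$ to complexes supported on $T$, these two sequences assemble into a commutative ladder
\[
\xymatrix@C.8pc{
K(X \ {\rm on} \ Z) \ar[r] \ar[d]_{\cong} & K(X) \ar[r] \ar[d]^{f^*} &
K(U) \ar[d]^{g^*} \\
K(Y \ {\rm on} \ T) \ar[r] & K(Y) \ar[r] & K(V),}
\]
whose rows are fibration sequences and whose left vertical arrow is the excision equivalence $f^*$ from the previous paragraph.

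Finally, I would invoke the standard fact that a commutative square of spectra in which both horizontal arrows extend to fibration sequences, and for which the induced map on the horizontal fibres is an equivalence, is homotopy Cartesian. Here the horizontal fibres of $K(X) \to K(U)$ and $K(Y) \to K(V)$ are $K(X \ {\rm on} \ Z)$ and $K(Y \ {\rm on} \ T)$, and the map between them is the equivalence $f^*$. Applying the criterion to the right-hand square of the ladder shows that ~\eqref{eqn:MV2} is homotopy Cartesian, as claimed.

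The main obstacle I anticipate is not the abstract homotopy-theoretic step but the \emph{naturality} required to build the ladder: one must verify that the localization fibration sequence of Theorem~\ref{thm:Localization}, which is extracted from T\"oen's localization for the dg-categories $C_Z(Perf/X)$, is strictly functorial with respect to $f^*$, so that the induced map of horizontal fibres genuinely coincides with the excision map of Theorem~\ref{thm:Excision} rather than merely being some abstractly equivalent spectrum. Concretely, this means checking that $f^*$ commutes with the open-restriction functors $C(X) \to C(U)$, $C(Y) \to C(V)$ and with the inclusions of the support subcategories at the level of dg-categories, so that it descends compatibly to the Verdier quotients and hence to the $K$-theory spectra. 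Once this compatibility is in place the argument is purely formal.
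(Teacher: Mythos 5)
Your proposal is correct and follows essentially the same route as the paper's own proof: both take $Z$ to be the closed complement of $U$ with $W = Z\times_X Y$, use Theorem~\ref{thm:Localization} on $X$ and $Y$ to produce the two homotopy fibration rows of the ladder, and invoke Theorem~\ref{thm:Excision} to see that the left vertical map is an equivalence, so the right-hand square is homotopy Cartesian. The only difference is one of emphasis: you make explicit the naturality of the localization sequences with respect to $f^*$ (and the standard fiber-equivalence criterion), which the paper's two-line proof leaves implicit.
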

\begin{proof}
Let $Z \overset{i}{\inj} X$ be the complement of $U$ and let
$W = Z {\times}_X Y$. By Theorem~\ref{thm:Localization}, there is a 
commutative diagram of morphism of spectra
\begin{equation}\label{eqn:MV3}
\xymatrix@C.6pc{
K(X \ {\rm on} \ Z) \ar[r] \ar[d] & K(X) \ar[r] \ar[d]_{f^*} & 
K(U) \ar[d]^{g^*} \\
K(Y \ {\rm on} \ W) \ar[r] & K(Y) \ar[r] & K(V)}
\end{equation} 
such that each row is a homotopy fibration. Moreover, the left vertical 
morphism is a homotopy equivalence by Theorem~\ref{thm:Excision}. This proves 
the corollary.
\end{proof}
\section{Nisnevich site of stacks}\label{section:NIS}
In this section, we define the Nisnevich site of a stack and study its basic
properties. It will turn out in the subsequent sections that the 
resulting Grothendieck topology is given by a certain 
$cd$-structure in the sense of \cite{Voev1} on the big \'etale site of $X$. 
This fact will be subsequently used in this paper to prove the Nisnevich
descent for the $K$-theory of perfect complexes on stacks. 
The equivariant Nisnevich site in the category $Sm^G_k$ of
smooth schemes over a field with action of a finite group, has also appeared
in \cite{HKO} in the study of equivariant stable homotopy theory.

Let $S$ be a 
noetherian affine scheme. Fron now on, we shall assume that all the objects in 
the category ${\sD}{\sM}_S$ are of finite type over $S$. Recall that a 
Deligne-Mumford stack $X$ is of finite type over $S$ if there is an 
\'etale atlas $U \xrightarrow{u} X$ such that $U$ is a scheme of finite type 
over $S$. Recall from Remark~\ref{remk:topos} that $\sE(X)$ is the category
whose objects are morphisms $[X' \xrightarrow{f} X]$ in
${\sD}{\sM}_S$, where $f$ is \'etale but not necessarily representable.

\subsection{Nisnevich topology}\label{subsection:NIS*}
Let $X$ be a stack. We recall from \cite[Chapter~5]{LMB} that a point
of the stack $X$ is an equivalence class of morphisms 
${\rm Spec}(K) \xrightarrow{x} X$, where $K$ is a $S$-field and where
the two points $(x, K)$ and $(y, L)$ are equivalent if there is
a common field extension $E$ of $K$ and $L$ such that the composite maps
${\rm Spec}(E) \to  {\rm Spec}(K) \xrightarrow{x} X$ and
${\rm Spec}(E) \to  {\rm Spec}(L) \xrightarrow{y} X$ are isomorphic.
If $X$ is a scheme, this gives the usual notion of points on schemes.
We often denote a point $(x, K)$ of $X$ simply by $x$ keeping the above
meaning in mind. Every point $x : {\rm Spec}(K) \to X$ has a unique
factorization 
\[
{\rm Spec}(K) \overset{\ov{x}}{\surj} \eta_x \inj X
\]
as $fpqc$ stacks, and it turns out that $\eta_x$ is in fact a $fppf$ stack
and hence a Deligne-Mumford stack. Moreover, there is a uniquely defined
field $k(\eta_x)$ with a unique map 
$\eta_x \to {\rm Spec}\left(k(\eta_x)\right)$ which is a gerbe. The field
$k(\eta_x)$ is called the {\sl residue field} of $x$ (or $\eta_x$) and
$\eta_x$ is called the {\rm residual gerbe} at $x$. Moreover, one can always
choose a representative $(x, K)$ of $x$ such that $k(\eta_x) \inj K$ is a
finite extension and there is a Cartesian diagram of stacks
\begin{equation}\label{eqn:point}
\xymatrix@C.7pc{
[{\rm Spec}(K)/{G_{\eta}}] \ar[r] \ar[d] & \eta_x \ar[d] \\
{\rm Spec}(K) \ar[r] & {\rm Spec}\left(k(\eta_x)\right),}
\end{equation}
where $G_{\eta}$ is the isotropy group scheme at the point $x$. The residual
gerbe $\eta_x$ is uniquely defined by the equivalence class of $x$ and 
$G_{\eta}$ is a group scheme over $\eta_x$. Note that $G_{\eta}$ is
a finite \'etale group scheme since $X$ is a Deligne-Mumford stack.
We shall often denote the residual gerbe $\eta_x$ at the point $x$ simply
by $\eta$. The morphism $\eta_x \to X$ is a monomorphism and representable
and the map $\eta_x \to {\rm Spec}\left(k(\eta_x)\right)$ is of finite type.
It follows in particular that this map is a coarse moduli space map.
Every stack is a union of its finitely many irreducible components and every
irreducible component has a unique generic point. The residual gerbe at
the generic point of an irreducible component will be called a generic
residual gerbe of the stack. The following elementary results about the 
residual gerbes on Deligne-Mumford stacks are well known.
\begin{lem}\label{lem:gerbe}
Let $X$ be a stack with a coarse moduli space $\underline{X}$. Let $x$ be 
a point of $X$ and let ${\rm Spec}(k) = \underline{x} \in \underline{X}$ be 
its image in the coarse moduli space. Then $k$ is the residue field of
the residual gerbe $\eta$ at $x$ and the diagram 
\begin{equation}\label{eqn:point*}
\xymatrix@C.8pc{
\eta \ar[r] \ar[d] & X \ar[d]^{p} \\
\underline{x} \ar[r] & \underline{X}}
\end{equation}
is Cartesian.
\end{lem}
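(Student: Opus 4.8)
The plan is to prove both assertions by reducing to an \'etale-local model of quotient type and then analyzing the integral extension $A^{\Gamma}\subseteq A$. First I would localize on the coarse space. Since the formation of the coarse moduli scheme commutes with \'etale base change on $\underline{X}$, the assertions may be checked \'etale-locally on $\underline{X}$; choosing via Lemma~\ref{lem:moduli} an \'etale neighbourhood of $\underline{x}$ over which $X$ has quotient form, together with a point above $\underline{x}$ with the same residue field, I may assume $X=[U/\Gamma]$ with $U=\Spec A$ affine noetherian, $\Gamma$ finite, and $\underline{X}=\Spec B$, $B=A^{\Gamma}$. Then $\underline{x}$ is a prime $\mathfrak{q}\subseteq B$ with $k=\kappa(\mathfrak{q})$, and (because $A$ is integral over $B$) the point $x$ corresponds to a single $\Gamma$-orbit of primes $\mathfrak{p}\subseteq A$ lying over $\mathfrak{q}$.

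For the residue-field assertion I would identify the residual gerbe explicitly. Writing $D_{\mathfrak{p}}=\{g\in\Gamma\mid g\mathfrak{p}=\mathfrak{p}\}$ for the decomposition group acting on $\kappa(\mathfrak{p})$, one has
\[
\eta=[\Spec\kappa(\mathfrak{p})/D_{\mathfrak{p}}],
\]
whose coarse moduli scheme is $\Spec(\kappa(\mathfrak{p})^{D_{\mathfrak{p}}})$; thus $k(\eta)=\kappa(\mathfrak{p})^{D_{\mathfrak{p}}}$. The decomposition--inertia theory for the integral extension $B\subseteq A$ shows that the canonical inclusion $\kappa(\mathfrak{q})\inj\kappa(\mathfrak{p})^{D_{\mathfrak{p}}}$ is purely inseparable, and the tameness of $X$ (inertia of order prime to the residue characteristic) forces this extension to be trivial, giving $k=\kappa(\mathfrak{q})=k(\eta)$.

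For the Cartesian square I would first produce the canonical comparison morphism: the identity $k=k(\eta)$ shows that the composite $\eta\inj X\xrightarrow{p}\underline{X}$ factors through $\underline{x}\inj\underline{X}$, whence by the universal property of the fibre product there is a natural map $\eta\to X\times_{\underline{X}}\underline{x}$. In the local model the target is $[\Spec(A\otimes_B k)/\Gamma]$, so it remains to compare this with $\eta=[\Spec\kappa(\mathfrak{p})/D_{\mathfrak{p}}]$. The key computation is that $\Spec(A\otimes_B k)$ is concentrated on the orbit $\Gamma\mathfrak{p}$, that its reduction is $\Gamma$-equivariantly $\coprod_{\Gamma/D_{\mathfrak{p}}}\Spec\kappa(\mathfrak{p})$, and that passing to the quotient stack collapses the orbit and returns $[\Spec\kappa(\mathfrak{p})/D_{\mathfrak{p}}]=\eta$.

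The main obstacle is precisely this last comparison. The scheme-theoretic fibre $\Spec(A\otimes_B k)$ can fail to be reduced exactly at the points where $p$ ramifies, i.e. where the stabilizer is non-trivial, so the comparison morphism $\eta\to X\times_{\underline{X}}\underline{x}$ must be handled with care and is in general an isomorphism onto the reduction of the fibre. I would therefore organize the argument around the decomposition and inertia groups of $\mathfrak{p}$ over $\mathfrak{q}$: the splitting behaviour of $\mathfrak{q}$ in $A$ determines the orbit $\Gamma\mathfrak{p}$ and the residue extension, tameness supplies the separability needed in the residue-field step, and it is the control of the nilpotents in $A\otimes_B k$ that is the delicate point in establishing that the square is Cartesian.
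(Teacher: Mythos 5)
You take a genuinely different route from the paper, and it stalls at the decisive step. The paper never passes to a local quotient presentation: it base-changes the coarse map along $\underline{x}\inj\underline{X}$, asserts that $p^{-1}(\underline{x})\to\underline{x}$ is again a coarse moduli map, concludes from \cite[Theorem~11.5]{LMB} that this fibre is a gerbe over $\underline{x}$, and then notes that $\eta\to p^{-1}(\underline{x})$ is a monomorphism (since $\eta\to X$ is) and an epimorphism, hence an isomorphism; both assertions follow at once. Your plan instead attacks the Cartesian claim by explicit computation in $[\Spec A/\Gamma]$, and your own closing paragraph concedes the gap: you only obtain a comparison map $\eta\to X\times_{\underline{X}}\underline{x}$ which is an isomorphism onto the \emph{reduction} of the fibre, deferring the ``control of the nilpotents in $A\otimes_B k$''. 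That deferred point is not a technicality to be organized around decomposition groups; it is the entire content of the claim, and it fails in the simplest examples. Take $X=[\A^1_{\C}/(\Z/2)]$ with the action $s\mapsto -s$, so $\underline{X}=\Spec\C[s^2]$, and let $\underline{x}$ be the origin. Then $X\times_{\underline{X}}\underline{x}=[\Spec\left(\C[s]/(s^2)\right)/(\Z/2)]$ is non-reduced, whereas $\eta=[\Spec\C/(\Z/2)]$ is a gerbe over a field, hence smooth over it and in particular reduced; the two are not isomorphic. So at any ramified orbit your comparison map is an isomorphism only after applying $(-)_{\red}$, and no inertia-theoretic bookkeeping will remove the nilpotents: what your method genuinely proves is $\eta\cong\left(X\times_{\underline{X}}\underline{x}\right)_{\red}$. (The same caveat in fact afflicts the quoted step of the paper's proof, since a non-reduced fibre cannot be a gerbe over $\underline{x}$; it is in the reduced form that such squares are used in Definition~\ref{defn:Nis-cd}. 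But as a proof of the lemma as stated, your argument is incomplete exactly where you admit it is.)

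A second, independent gap is that you invoke tameness, which is not among the hypotheses: the lemma is stated for an arbitrary stack with a coarse moduli space, and the paper's argument nowhere uses tameness. Your instinct that some such hypothesis is needed is correct: with wild inertia the purely inseparable discrepancy is real. For instance, let $\Gamma=\Z/p$ act on $\F_p[u,s]$ by $\sigma(u)=u$, $\sigma(s)=s+u$; then $B=\F_p[u,w]$ with $w=s^p-u^{p-1}s$, and at $\mathfrak{p}=(u)$ one has $D_{\mathfrak{p}}=I_{\mathfrak{p}}=\Gamma$, so $k(\eta)=\kappa(\mathfrak{p})^{D_{\mathfrak{p}}}=\F_p(s)$ while $k=\kappa(\mathfrak{q})=\F_p(s^p)$. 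Thus the residue-field assertion genuinely requires a tameness-type hypothesis --- but this means your proposal proves a (corrected) tame variant rather than the statement posed, and that mismatch must be flagged, not silently absorbed. Finally, a repairable slip in your reduction step: an \'etale neighbourhood of $\underline{x}$ need not contain a point with residue field equal to $k$ (this is precisely the difference between \'etale and Nisnevich covers), so you cannot ``choose a point above $\underline{x}$ with the same residue field''; instead you must argue that both assertions are \'etale-local on $\underline{X}$ --- Cartesianness by descent, and the residue-field claim rephrased as ``$\eta\to\underline{x}$ is a gerbe''.
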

\begin{proof}(Sketch) The composite map ${\rm Spec}(K) \to X \to \underline{X}$
gives a commutative diagram
\begin{equation}\label{eqn:point*}
\xymatrix@C.8pc{
{\rm Spec}(K) \ar[r] \ar[drr] & \eta \ar[r] \ar[dr] & 
p^{-1}(\underline{x}) \ar[r] \ar[d]_{q} & X \ar[d]^{p} \\
& & \underline{x} \ar[r] & \underline{X},}
\end{equation}
where the square is Cartesian. Since $\underline{x}$ is the image of
a point on $X$, the
map $p^{-1}(\underline{x}) \to \underline{x}$ is a coarse moduli space map.
Since $\underline{x}$ is the spectrum of a field, it follows from
\cite[Theorem~11.5]{LMB}) that $q$ is a gerbe. It follows now from
[{\sl loc. cit.} Lemme~3.17] that $\eta \to p^{-1}(\underline{x})$ is an
epimorphism. Since $\eta \to X$ is a monomorphism, we see that
$\eta \to p^{-1}(\underline{x})$ is an isomorphism.
\end{proof}
\begin{lem}\label{lem:gerbeiso}
Let $f : X \to Y$ be a morphism of stacks. Let $x$ be a point of $X$ and let
$y$ be its image in $Y$. Then the induced map of the residual gerbes
$\eta_x \to \eta_y$ is an isomorphism if and only if the induced maps on 
the corresponding isotropy groups and the residue fields are isomorphisms.
\end{lem}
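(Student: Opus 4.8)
The plan is to use the description of a residual gerbe $\eta$ as a gerbe over the spectrum of its residue field $k(\eta)$ banded by its isotropy group $G_\eta$, together with the trivialization recorded in~\eqref{eqn:point}. The forward implication is formal. An isomorphism $\eta_x \xrightarrow{\cong} \eta_y$ of stacks sends the coarse moduli space of the source to that of the target and the automorphism group scheme of an object to that of its image, and both of these constructions are functorial; since $\Spec(k(\eta_x))$ is the coarse moduli space of $\eta_x$ and $G_{\eta_x}$ is its automorphism group scheme, the induced maps on residue fields and on isotropy groups are then isomorphisms. So the first step is merely to record this functoriality.

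For the converse, assume the induced maps on residue fields and on isotropy groups are isomorphisms. The morphism $f$ induces $\eta_x \to \eta_y$ compatibly with the structure maps $\eta_x \to \Spec(k(\eta_x))$ and $\eta_y \to \Spec(k(\eta_y))$. First I would base change $\eta_y$ along the isomorphism $\Spec(k(\eta_x)) \xrightarrow{\cong} \Spec(k(\eta_y))$; writing $k = k(\eta_x)$, this reduces the problem to the situation in which $\eta_x$ and the base change $\eta_y'$ are two gerbes over the single base $\Spec(k)$, the morphism between them lies over $\id_{\Spec(k)}$, and it induces an isomorphism $\phi \colon G_{\eta_x} \to G_{\eta_y}$ on banding group schemes. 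It therefore suffices to prove that a morphism of Deligne-Mumford gerbes over a field that is an isomorphism on the band is an equivalence.

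To establish this I would exploit~\eqref{eqn:point}. Choose a finite extension $K/k$ large enough that the representative $\Spec(K) \to \eta_x$ neutralizes $\eta_x$, so that $\eta_x \times_{\Spec(k)} \Spec(K) \cong [\Spec(K)/G_{\eta_x}]$; pushing this section forward along $f$ neutralizes $\eta_y'$ as well, and under the two trivializations the base-changed morphism is identified with the map $[\Spec(K)/G_{\eta_x}] \to [\Spec(K)/G_{\eta_y}]$ induced by $\phi$, which is an equivalence because $\phi$ is an isomorphism. Thus $\eta_x \to \eta_y'$ becomes an equivalence after the finite faithfully flat base change $\Spec(K) \to \Spec(k)$ on the coarse space, and since being an isomorphism of algebraic stacks is local on the base for the fppf topology, I conclude that $\eta_x \to \eta_y'$, and hence $\eta_x \to \eta_y$, is an isomorphism. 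The main obstacle is precisely this descent step together with the bookkeeping around the trivializations: one must check that the section of $\eta_y'$ obtained from that of $\eta_x$ genuinely neutralizes $\eta_y'$ and that the resulting comparison map on bands is exactly the hypothesized isotropy isomorphism $\phi$, after which fppf descent of isomorphisms finishes the argument.
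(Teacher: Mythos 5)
Your proof is correct and takes essentially the same route as the paper's: both arguments neutralize the two gerbes over a common finite extension of the residue field, identify the base-changed morphism with the map of quotient stacks induced by the isotropy-group isomorphism, and conclude by descent of isomorphisms along the cover (the paper invokes an \'etale cover, you invoke fppf locality, which is equally valid). Your write-up is in fact more explicit than the paper's about the trivialization bookkeeping --- checking that the pushed-forward section neutralizes the target gerbe and that the induced map on bands is the hypothesized isomorphism --- which the paper's two-line converse silently assumes.
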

\begin{proof}
Since the spectrum of the residue field of the residual gerbe is its coarse
moduli space and since the residual gerbe has only one point, we see
easily that the isomorphism of the residual gerbes implies the isomorphism
of the residue fields and the isotropy groups. To prove the converse, let
$k$ be the residue field of $\eta_x$ and $\eta_y$. Then, there is a finite
extension $l$ of $k$ such that the base extension of $\eta_x$ and
$\eta_y$ over $l$ are the neutral gerbes of the form 
$[{{\rm Spec}(l)}/{G}]$ and $[{{\rm Spec}(l)}/{G}]$, where $G$ is the common
isotropy group. Thus, the natural map $\eta_x \to \eta_y$ becomes an
isomorphism after an \'etale cover. Hence the map must be an isomorphism.
\end{proof} 

\begin{defn}\label{defn:Nis-cite}
Let $X$ be a stack over $S$. A family of morphisms $\{X_i \xrightarrow{f_i} 
X\}$ in the \'etale site ${\sE}t(X)$ ({\sl cf.} Remark~\ref{remk:topos}) 
is called a {\sl Nisnevich} cover of $X$ if for every point $x \in X$, there 
is a member $X_i$ and a point $x_i \in X_i$ such that $f_i(x_i) = x$ and the 
induced map of the residual gerbes $\eta_{x_i} \to \eta_x$ is an isomorphism.
\end{defn}
\begin{lem}\label{lem:Nis-cite*}
The category ${\sD}{\sM}_S$ with covering maps 
$\{X_i \xrightarrow{f_i} X\}$ given by the
Nisnevich covers, defines a Grothendieck topology on ${\sD}{\sM}_S$.
\end{lem}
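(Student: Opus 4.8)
The plan is to verify the three axioms of a Grothendieck pretopology for the families declared to be covers in \defref{defn:Nis-cite}: that every isomorphism is a cover, that covers are stable under base change, and that covers satisfy the local character (transitivity) property. Throughout I will use that an \'etale morphism stays \'etale under base change and composition and that $\sD\sM_S$ is closed under fibre products, so that each family produced below again lies in the big \'etale site $\sE t(X)$. The only stack-theoretic input beyond these formalities is the behaviour of residual gerbes, which is controlled by \lemref{lem:gerbe} and \lemref{lem:gerbeiso}.

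I would dispatch the two formal axioms first. If $f : X' \to X$ is an isomorphism it is in particular \'etale, and for each point $x$ its unique preimage $x'$ satisfies $\eta_{x'} \xrightarrow{\cong} \eta_x$ by functoriality of the residual gerbe; hence $\{X' \to X\}$ is a Nisnevich cover. For transitivity, let $\{X_i \xrightarrow{f_i} X\}$ be a Nisnevich cover and let each $\{X_{ij} \to X_i\}$ be one as well. Given a point $x$ of $X$, I first pick $i$ and $x_i \in X_i$ over $x$ with $\eta_{x_i} \xrightarrow{\cong} \eta_x$, and then $j$ and $x_{ij} \in X_{ij}$ over $x_i$ with $\eta_{x_{ij}} \xrightarrow{\cong} \eta_{x_i}$. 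By functoriality the residual-gerbe map induced by the composite $X_{ij} \to X$ is the composite $\eta_{x_{ij}} \to \eta_{x_i} \to \eta_x$ of two isomorphisms, so $\{X_{ij} \to X\}$ is again a Nisnevich cover.

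The substantial step, and the one I expect to be the main obstacle, is stability under base change. Let $\{X_i \xrightarrow{f_i} X\}$ be a Nisnevich cover and $g : Y \to X$ an arbitrary morphism in $\sD\sM_S$; put $Y_i = X_i \times_X Y$, so that $Y_i \to Y$ is \'etale. Fix a point $y$ of $Y$ with image $x = g(y)$, and let $\eta_y \to \eta_x$ be the induced map of residual gerbes. Choosing $i$ and $x_i$ with $\eta_{x_i} \xrightarrow{\cong} \eta_x$, the point $x_i$ together with this isomorphism is exactly a section of the \'etale projection $X_i \times_X \eta_x \to \eta_x$. I would then base-change this section along $\eta_y \to \eta_x$: factoring $\eta_y \to X$ through $\eta_x$ gives
\[
Y_i \times_Y \eta_y \;\cong\; X_i \times_X \eta_y \;\cong\; (X_i \times_X \eta_x) \times_{\eta_x} \eta_y,
\]
and the pulled-back section $s : \eta_y \to Y_i \times_Y \eta_y$ splits the \'etale projection to $\eta_y$. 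Composing $s$ with $Y_i \times_Y \eta_y \to Y_i$ produces a point $y_i$ of $Y_i$ lying over $y$, and I claim $\eta_{y_i} \xrightarrow{\cong} \eta_y$, which will show $\{Y_i \to Y\}$ is a Nisnevich cover.

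To prove this last claim I would reduce to \lemref{lem:gerbeiso} and compute the residue field and isotropy group of $y_i$ from the fibre-product description. Since $\eta_{x_i} \to \eta_x$ is an isomorphism, \lemref{lem:gerbeiso} gives that the residue fields and isotropy groups at $x_i$ and $x$ agree; the section $s$ then equips $y_i$ with residue field $k(\eta_y)$, while the isotropy group scheme at $y_i$ is the pullback along $\eta_y \to \eta_x$ of that at $x_i$, hence is isomorphic to the isotropy group at $y$. Feeding these two isomorphisms back into \lemref{lem:gerbeiso} yields $\eta_{y_i} \xrightarrow{\cong} \eta_y$. The delicate point here is the careful bookkeeping of the isotropy group schemes under the base change $\eta_y \to \eta_x$, for which the Cartesian description of residual gerbes over the coarse moduli space in \lemref{lem:gerbe} is the natural tool; everything else is the straightforward stack-theoretic analogue of the residue-field argument in the scheme case.
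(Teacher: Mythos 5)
Your proposal is correct and, in substance, it is the paper's own proof: both treat stability under base change as the only non-trivial axiom, and your pulled-back section is exactly the paper's construction, namely the point of $X_i {\times}_X Y$ defined by the monomorphism $\eta_{x_i} {\times}_{\eta_x} \eta_y \xrightarrow{\cong} \eta_y \to X_i {\times}_X Y$. The one slip is in your isotropy bookkeeping: the isotropy group at $y_i$ is \emph{not} the base change along $\eta_y \to \eta_x$ of the isotropy group at $x_i$ --- that base change is the isotropy group of $x$ restricted to $\eta_y$, which is in general strictly larger than ${\rm Aut}(y)$ (take $Y \to X$ to be an atlas, where ${\rm Aut}(y)$ is trivial). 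The correct statement comes from the description of points of a $2$-fibre product: ${\rm Aut}(y_i) \cong {\rm Aut}(x_i) \times_{{\rm Aut}(x)} {\rm Aut}(y)$, and the projection to ${\rm Aut}(y)$ is an isomorphism precisely because ${\rm Aut}(x_i) \to {\rm Aut}(x)$ is one, by Lemma~\ref{lem:gerbeiso} applied to $\eta_{x_i} \xrightarrow{\cong} \eta_x$. With that repair (your residue-field argument via the section is fine), Lemma~\ref{lem:gerbeiso} indeed yields $\eta_{y_i} \xrightarrow{\cong} \eta_y$; note this final identification is the step the paper itself asserts without further justification, so your extra detail, once corrected, is a genuine (if minor) amplification of the published argument.
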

\begin{proof} All the conditions for a Grothendieck topology are immediate
except possibly the condition that a Nisnevich cover has the base change
property. To see this, let $Y \xrightarrow{f} X$ be a morphism in
${\sD}{\sM}_S$ and let $\{X_i \xrightarrow{f_i} X\}$ be a Nisnevich cover. 
It is clear that $X_i {\times}_X Y \to Y$ is \'etale. Now let $y \in Y$ be a 
point with $f(y) = x$ and let $x_i \in X_i$ be such that $f_i(x_i) = x$
and $\eta_{x_i} \xrightarrow{\cong} \eta_x$. This implies that $\eta_{x_i}
{\times}_{\eta_x} \eta_y \xrightarrow{\cong} \eta_y$. Thus we see that
$\eta_{x_i}{\times}_{\eta_x} \eta_y$ defines a point $y_i \in X_i{\times}_X Y$
which maps to $y$ such that $\eta_{y_i} \xrightarrow{\cong} \eta_y$.
\end{proof}    
We shall call the Grothendieck topology defined by the Nisnevich coverings,
the {\sl Nisnevich topology} on ${\sD}{\sM}_S$. The corresponding site
will be denoted by $\left({\sD}{\sM}_S\right)_{Nis}$. Note that since all
stacks in this category are noetherian and of finite type over $S$,
every Nisnevich cover has a refinement by a finite cover. In other words,
$\left({\sD}{\sM}_S\right)_{Nis}$ is a noetherian site.

Let ${\left({\sD}{\sM}_S\right)}^r_{Nis}$ be the topology on ${\sD}{\sM}_S$
generated by those Nisnevich coverings $\{X_i \xrightarrow{f_i} X\}$
where each $f_i: X_i \to X$ is representable. 
Since the representable morphisms have the base change property, we see that
${\left({\sD}{\sM}_S\right)}^r_{Nis}$ is also a Grothendieck site on
${\sD}{\sM}_S$. 

\subsection{Stacks with moduli schemes}\label{subsection:St-mod}
To prove the main results of this paper, we mostly focus on 
those stacks which have coarse moduli schemes. We need the following results
in order to show that the Nisnevich topology restricts to a similar topology
on the category of those stacks which have the coarse moduli schemes.
We first quote the following result from \cite{DR}.
\begin{thm}\label{thm:mod-sp}
Let $X$ be stack over $S$. Then $X$ has the coarse moduli space 
$X \xrightarrow{p} \un{X}$, where $\un{X}$ is an algebraic space. The map $p$ 
is separated, proper and quasi-finite. Moreover, $\un{X}$ is separated, 
noetherian and of finite type over $S$. In particular, $\un{X} \in
{\sD}{\sM}_S$.
\end{thm}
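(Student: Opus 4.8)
The plan is to obtain everything from the existence of coarse moduli spaces for separated Deligne--Mumford stacks, the classical form of which is the cited result of Deligne--Rapoport and the general form the Keel--Mori theorem. The first step is to observe that the inertia stack $I_X = X \times_{X \times_S X} X$ is finite over $X$: the Deligne--Mumford condition makes the stabilizers of geometric points finite, so $I_X \to X$ is quasi-finite, while the separatedness of $X$ (properness of the diagonal) makes $I_X \to X$ proper; a proper quasi-finite morphism of finite type is finite. With finite inertia in hand, the Keel--Mori theorem (equivalently \cite[Theorem~6.12]{DR} in the present noetherian setting) produces a coarse moduli space $p : X \to \un{X}$ with $\un{X}$ an algebraic space, where $p$ is universal for morphisms to algebraic spaces and induces a bijection on the set of geometric points.

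Next I would read off the asserted properties of $p$. Quasi-finiteness is the statement that the fibers of $p$ are $0$-dimensional, which holds because each fiber is the residual gerbe banded by a finite stabilizer group; properness of $p$ (in particular that $p$ is universally closed, separated and of finite type) is part of the Keel--Mori output; and separatedness of $p$ may alternatively be seen from the fact that $p$ has finite, hence separated, diagonal. For the properties of $\un{X}$, note that $X \to S$ is of finite type and $p$ is surjective and of finite type, so $\un{X} \to S$ is of finite type and $\un{X}$ is noetherian since $S$ is. Separatedness of $\un{X}$ over $S$ then follows by descent along the proper surjection $p$: the composite $X \to \un{X} \to S$ is separated and $p$ is proper and surjective, which forces $\un{X} \to S$ to be separated, via the valuative criterion, the valuative data lifting through $p$ because $p$ is surjective and universally closed.

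Finally, for membership in ${\sD}{\sM}_S$ I would invoke the definitions recalled in Section~\ref{section:Prelims}: an algebraic space is precisely a Deligne--Mumford stack whose diagonal is an embedding, and it is separated exactly when that diagonal is a closed embedding, which is guaranteed by the separatedness of $\un{X}$ just established. Thus $\un{X}$ is a separated noetherian Deligne--Mumford stack of finite type over $S$, that is, an object of ${\sD}{\sM}_S$. The only genuinely deep input here is the existence statement itself (Keel--Mori / Deligne--Rapoport), and that is where I would expect the real difficulty to lie; the remaining assertions are permanence properties that propagate routinely along the proper surjective map $p$.
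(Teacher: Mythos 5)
Your proposal takes essentially the same approach as the paper: the paper's entire proof of this theorem is the citation \emph{Cf.}~\cite[Theorem~6.12]{DR} (Rydh's form of the Keel--Mori theorem), which is exactly the main input you invoke, and in the noetherian setting that theorem already delivers every listed property of $p$ and of $\un{X}$. Your supplementary permanence arguments are therefore redundant; the only loose step among them---deducing that $\un{X} \to S$ is of finite type merely from surjectivity of $p$ and finite-typeness of the composite $X \to S$---is not a valid implication in that bare form, but this is immaterial since finite-typeness of the coarse space is part of the cited theorem's conclusions.
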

\begin{proof}
{\sl Cf.} \cite[Theorem~6.12]{DR}.
\end{proof}

\begin{lem}\label{lem:mod-sp*}
Let 
\begin{equation}\label{eqn:mod-sp*1}
\xymatrix@C.8pc{
X \ar[r]^{f} \ar[d]_{p} & Y \ar[d]^{q} \\
\un{X} \ar[r]_{g} & \un{Y}}
\end{equation}
be a commutative diagram in ${\sD}{\sM}_S$ ({\sl cf.} Theorem~\ref{thm:mod-sp})
where the bottom arrow is the map of coarse moduli spaces of the top arrow. 
Then, \\
$(i)$ $g$ is separated, quasi-compact and of finite type. \\
$(ii)$ $f$ is quasi-finite $\Rightarrow$ $g$ is quasi-finite. \\
$(iii)$ $f$ is closed $\Rightarrow$ $g$ is closed. \\
$(iv)$ $f$ is proper and $Y$ tame $\Rightarrow$ $g$ is proper. \\
$(v)$  $f$ quasi-finite and $\un{Y}$ is a scheme $\Rightarrow$ $\un{X}$
is a scheme. \\
\end{lem}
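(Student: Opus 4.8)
**The plan for Lemma~\ref{lem:mod-sp*}.**

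The guiding principle throughout is that all five assertions concern properties of the morphism $g : \un{X} \to \un{Y}$ of coarse moduli spaces, and in each case the strategy is to \emph{transfer the hypothesis on $f$ to $g$ via the vertical coarse moduli maps} $p$ and $q$, exploiting that $p$ and $q$ are separated, proper, quasi-finite and surjective by Theorem~\ref{thm:mod-sp}. The key structural fact I would repeatedly invoke is that a coarse moduli map is a \emph{universal homeomorphism onto its image on underlying topological spaces}: $p$ and $q$ induce bijections on points (each stack-point maps to the point of its residual gerbe's image, by Lemma~\ref{lem:gerbe}) and are proper, hence closed. This lets me read off topological properties of $g$ from $f$, since $g \circ p = q \circ f$ and $q$, $p$ are surjective.

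For (i), separatedness and quasi-compactness of $g$ follow from the corresponding properties of $f$ together with those of $p$, $q$: since $q \circ f = g \circ p$ is separated (being a composite where $q$ is proper hence separated and $f$ is separated by Proposition~\ref{prop:QCS}), and $p$ is surjective and quasi-compact, the cancellation property for separated morphisms forces $g$ to be separated; quasi-compactness of $g$ is similar, using that $p$ is surjective and $f$, $q$ are quasi-compact. Finite type follows because $\un{X}$ and $\un{Y}$ are of finite type over $S$ by Theorem~\ref{thm:mod-sp}, so $g$ is a morphism between finite-type $S$-objects and hence itself of finite type. For (ii) and (iii), I would argue topologically: since $p$ is surjective and finite (proper plus quasi-finite) and $q$ is finite, quasi-finiteness and closedness can be checked on points and closed sets, and the identity $g \circ p = q \circ f$ transports the fiber-finiteness (for (ii)) and the closed-map property (for (iii)) from $f$ to $g$. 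The point is that $p$ identifies the topological space of $\un{X}$ with that of $X$ (and likewise $q$), so $g$ and $f$ have ``the same'' underlying map of spaces up to these homeomorphisms-on-points.

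For (iv), properness of $g$, I expect the hypothesis that $Y$ is \emph{tame} to be essential: properness is universally closed plus separated plus finite type. Separated and finite type come from (i); universal closedness should follow from (iii) applied after base change, but the subtlety is that base changes of $g$ must again arise as coarse-moduli maps of base changes of $f$, and the formation of coarse moduli spaces commutes with flat (or suitably nice) base change \emph{precisely} when the stack is tame --- this is the standard failure point in positive characteristic (cf.\ Remark~\ref{remk:general}). So I would use tameness of $Y$ to guarantee that coarse moduli formation is stable under the base changes needed to verify universal closedness, then apply (iii) fiberwise. Finally, (v) is the most delicate: here I want to upgrade ``$\un{X}$ is an algebraic space'' to ``$\un{X}$ is a scheme'' given that $f$ is quasi-finite and $\un{Y}$ is a scheme. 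The plan is to combine quasi-finiteness of $g$ (from (ii)) with a \emph{Zariski Main Theorem} type statement for algebraic spaces: a separated, quasi-finite morphism $g : \un{X} \to \un{Y}$ from an algebraic space to a scheme factors as an open immersion into a finite $\un{Y}$-scheme, and since finite schemes over a scheme are schemes and open subspaces of schemes are schemes, $\un{X}$ is a scheme. This last step is where the real work lies, and where I expect the \textbf{main obstacle}: one must ensure $g$ is quasi-finite and separated (not merely that $f$ is), which is exactly what (i) and (ii) provide, and then correctly apply the algebraic-space form of Zariski's Main Theorem; verifying the hypotheses of that theorem in the stacky setting, rather than the theorem itself, is the delicate part.
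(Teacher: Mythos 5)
Your treatment of parts (ii)--(v) is essentially the proof in the paper. For (ii) and (iii) the paper transports the hypothesis through the identity $g \circ p = q \circ f$ exactly as you propose: $q \circ f$ is quasi-finite, hence so is $g$ because $p$ is surjective; and for closed $\un{Z} \subset \un{X}$ one has $g(\un{Z}) = q\bigl(f(p^{-1}(\un{Z}))\bigr)$, which is closed because $f$ is closed and $q$ is proper. For (iv) the paper uses precisely the point you identify: tameness of $Y$ guarantees, by \cite[Lemma~2.3.3]{Ab-V}, that $\un{Z} \times_{\un{Y}} Y \to \un{Z}$ is again a coarse moduli map for an \emph{arbitrary} base change $\un{Z} \to \un{Y}$ (flat base change needs no tameness; arbitrary base change does), after which the argument of (iii), applied to the proper and hence closed base change of $f$, gives universal closedness. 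For (v) the paper simply cites \cite[Corollary~II.6.16]{Knutson}, which is exactly the Zariski-Main-Theorem-type statement you sketch: a separated, quasi-finite, finite type morphism of noetherian algebraic spaces whose target is a scheme has scheme source; the hypotheses on $g$ are supplied by (i) and (ii), as you say.

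The one step that fails as written is the separatedness claim in (i). The cancellation you invoke --- $g \circ p$ separated, $p$ surjective and quasi-compact, hence $g$ separated --- is false in that generality: take $\un{Y} = \Spec(k)$, let $\un{X}$ be the affine line with doubled origin, and let $p : \A^1 \coprod \A^1 \to \un{X}$ be the evident cover; then $p$ is surjective, \'etale and quasi-compact, and $g \circ p$ is separated, yet $g$ is not. To descend separatedness along $p$ you must use that $p$ is universally closed: after any base change, the image of the monomorphism $\Delta_g$ coincides with the image of the closed subset $\Delta_{g \circ p}(X)$ under the proper surjection $X \times_{\un{Y}} X \to \un{X} \times_{\un{Y}} \un{X}$, so $\Delta_g$ is a proper monomorphism and hence a closed immersion. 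Since $p$ is indeed proper by Theorem~\ref{thm:mod-sp}, your argument is repairable, but the paper's route is much shorter: $\un{X}$ and $\un{Y}$ are themselves separated, noetherian and of finite type over $S$ by Theorem~\ref{thm:mod-sp}, hence objects of ${\sD}{\sM}_S$, so Proposition~\ref{prop:QCS} applies directly to $g$ (this rests only on the easy cancellation, namely that $\un{X} \to S$ separated forces $\un{X} \to \un{Y}$ separated). A further small inaccuracy occurs in your (ii): $p$ is proper and quasi-finite but not in general finite --- it need not even be representable (consider $B({\Z}/2) \to \Spec(k)$); fortunately that step of your argument only uses that $p$ is surjective on points.
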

\begin{proof}
Since the moduli spaces are quasi-compact (in fact noetherian), separated 
and of finite type over $S$ by Theorem~\ref{thm:mod-sp}, we conclude from 
Proposition~\ref{prop:QCS} that $g$ is quasi-compact and separated and of
finite type. If $f$ is quasi-finite, then Theorem~\ref{thm:mod-sp} implies
that $g \circ p$ is quasi-finite. But, $g$ must then be quasi-finite.

Suppose now that $f$ is closed and let $\un{Z} \subset \un{X}$ be a 
closed subspace. Then $Z = p^{-1}(\un{Z}) = \un{Z} {\times}_{\un{X}} X$
is a closed substack of $X$ and hence $g(\un{Z}) = g \circ p(Z) = q \circ f(Z)$
is closed in $\un{Y}$ since $q$ is proper by Theorem~\ref{thm:mod-sp} and
hence closed. 

Assume now that $Y$ is a tame stack and $f$ is proper. We need to
show that $g$ is universally closed. So let $\un{Z} \xrightarrow{h} \un{Y}$
be a morphism of algebraic spaces. Since $Y$ is tame, it follows from
\cite[Lemma~2.3.3]{Ab-V} that $\un{Z} {\times}_{\un{Y}} Y \to \un{Z}$ is a
coarse moduli space. We need to show that this map is closed. Since the base
change of a proper map is proper and hence closed, the above proof now
shows that the map $\un{Z} {\times}_{\un{Y}} Y \to \un{Z}$ is closed.

Finally, assume that $\un{Y}$ is a scheme and $f$ is quasi-finite. Then we
see from the other assertions of the lemma that $g$ is a quasi-finite map
of noetherian algebraic spaces which is of finite type and separated with
$\un{Y}$ a scheme. Hence, it follows from \cite[Corollary~II.6.16]{Knutson}
that $\un{X}$ is also a scheme.
\end{proof}
\begin{prop}\label{prop:mod-sch}
Let $S$ be the spectrum of a field and let $\wt{{\sD}{\sM}}_S$ be the
full subcategory of ${\sD}{\sM}_S$ consisting of those stacks which have
coarse moduli schemes. Then $\wt{{\sD}{\sM}}_S$ is closed under fiber
products.
\end{prop}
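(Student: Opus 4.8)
The plan is to reduce the entire statement to part $(v)$ of Lemma~\ref{lem:mod-sp*}, which upgrades a quasi-finite morphism landing in a stack whose moduli space is a scheme into a statement that the source also has a scheme as moduli space. So let $X, Y, Z \in \wt{{\sD}{\sM}}_S$ be equipped with morphisms $X \to Z$ and $Y \to Z$, and set $P = X \times_Z Y$. Since ${\sD}{\sM}_S$ is closed under fiber products, $P$ is a separated Deligne--Mumford stack of finite type over $S$, so by Theorem~\ref{thm:mod-sp} it already has a coarse moduli space $\un{P}$ which is an algebraic space; the only thing left to prove is that $\un{P}$ is in fact a scheme.

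First I would exhibit a quasi-finite morphism from $P$ to a scheme. The key observation is the standard identification
\[
P \;\cong\; (X \times_S Y) \times_{Z \times_S Z} Z,
\]
so that the projection $P \to X \times_S Y$ is the base change of the diagonal $\Delta_Z : Z \to Z \times_S Z$ along the map $X \times_S Y \to Z \times_S Z$ induced by the two structure morphisms. Since $Z$ is a Deligne--Mumford stack, $\Delta_Z$ is unramified and hence quasi-finite (indeed finite, $Z$ being separated), and quasi-finiteness is stable under base change; thus $P \to X \times_S Y$ is quasi-finite.

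Next I would compose with $X \times_S Y \to \un{X} \times_S \un{Y}$. This latter map is itself quasi-finite, since it factors as
\[
X \times_S Y \to \un{X} \times_S Y \to \un{X} \times_S \un{Y},
\]
each arrow being a base change of one of the coarse moduli maps $X \to \un{X}$ or $Y \to \un{Y}$, which are quasi-finite by Theorem~\ref{thm:mod-sp}. Because $S$ is the spectrum of a field and $\un{X}, \un{Y}$ are schemes by hypothesis, the target $\un{X} \times_S \un{Y}$ is a scheme. Hence the composite $P \to \un{X} \times_S \un{Y}$ is a quasi-finite morphism whose target is a scheme, and applying Lemma~\ref{lem:mod-sp*}$(v)$ to it, with $\un{X} \times_S \un{Y}$ playing the role of the target (whose coarse moduli space is itself), yields that $\un{P}$ is a scheme. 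Therefore $P \in \wt{{\sD}{\sM}}_S$, as required.

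I expect the only point needing genuine care to be the quasi-finiteness of $P \to X \times_S Y$: one must correctly recognize this projection as a base change of the diagonal of $Z$ and invoke that the diagonal of a Deligne--Mumford stack is unramified, hence quasi-finite. Everything else is a formal manipulation of base changes together with the single substantive input, Lemma~\ref{lem:mod-sp*}$(v)$. In particular, no tameness hypothesis is needed anywhere, since that lemma does not require it.
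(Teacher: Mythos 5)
Your proof is correct, and it is leaner than the paper's own in a way worth recording. Both arguments share the same skeleton: the identification $P = X \times_Z Y \cong (X \times_S Y) \times_{Z \times_S Z} Z$, which exhibits $P \to X \times_S Y$ as a base change of the quasi-finite diagonal $\Delta_Z$, and Lemma~\ref{lem:mod-sp*}$(v)$ as the device converting quasi-finiteness over a scheme target into the scheme property for the coarse space. The difference is in how one reaches a target whose coarse space is known to be a scheme. The paper applies Lemma~\ref{lem:mod-sp*} to the map $X \times_S Y \to \un{X} \times_S Y$, and to do so it must first identify the coarse moduli space of the intermediate stack $\un{X} \times_S Y$ as $\un{X} \times_S \un{Y}$; that identification is a flat base change statement for coarse moduli spaces (\cite[Lemma~2.2.2]{Ab-V}), and it is exactly there, and only there, that the hypothesis that $S$ is the spectrum of a field enters (cf.\ Remark~\ref{remk:tame-fiber-prod}). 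You bypass this entirely: you never identify the coarse space of any intermediate object, but only observe that the composite $P \to X \times_S Y \to \un{X} \times_S Y \to \un{X} \times_S \un{Y}$ is quasi-finite --- a purely formal consequence of stability of quasi-finiteness under base change and composition, facts the paper itself uses --- and that the final target is a scheme, which holds over any base since fiber products of schemes are schemes. A single application of Lemma~\ref{lem:mod-sp*}$(v)$ then finishes. The payoff is that your argument proves the proposition over an arbitrary noetherian affine base $S$ with no tameness assumption, which is strictly stronger than both the stated proposition and Remark~\ref{remk:tame-fiber-prod} (which recovers a general base only for tame stacks); indeed, your one appeal to the field hypothesis --- to see that $\un{X} \times_S \un{Y}$ is a scheme --- is itself superfluous, so nothing in your proof uses it.
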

\begin{proof}
Let $X \xrightarrow{f} Z$ and $Y \xrightarrow{g} Z$ be the morphisms 
in ${\sD}{\sM}_S$ and let $W = X {\times}_Z Y$. We need to show that
$W$ has a coarse moduli scheme if $X$, $Y$ and $Z$ have so. We first 
notice that $W$ has a coarse moduli space by Theorem~\ref{thm:mod-sp}.
We next observe that the diagram 
\[
\xymatrix{
W \ar[r]^{h} \ar[d] & X {\times}_S Y \ar[d] \\
Z \ar[r]_{\Delta_Z} & Z {\times}_S Z}
\]
is a Cartesian diagram. Since $Z$ is a Deligne-Mumford stack, $\Delta_Z$
is separated and quasi-finite by \cite[Lemme~4.2]{LMB} and hence so is the
map $h$. We see from Lemma~\ref{lem:mod-sp*} that the coarse moduli space
of $W$ is a scheme if the same holds for the coarse moduli space of
$X {\times}_S Y$. 

Let us now assume that the coarse moduli spaces $\un{X}$ and $\un{Y}$ of $X$ 
and $Y$ respectively are schemes.
Since $S$ is the spectrum of a field, the map $\un{X} {\times}_S \un{Y} 
\to \un{Y}$ is flat and hence the map $\un{X} {\times}_S Y \to 
\un{X} {\times}_S \un{Y}$ is a coarse moduli space by \cite[Lemma~2.2.2]{Ab-V}.
On the other hand, the map $X {\times}_S Y \to \un{X} {\times}_S Y$
is of finite-type, separated, and quasi-finite map of noetherian stacks
by Theorem~\ref{thm:mod-sp}. Hence the map $T \to \un{X} {\times}_S \un{Y}$
is of finite type, separated and quasi-finite map of noetherian algebraic
spaces by Lemma~\ref{lem:mod-sp*}, where $T$ is the coarse moduli space
of $X {\times}_S Y$. It follows again from Lemma~\ref{lem:mod-sp*} that $T$
is a scheme.
\end{proof}
\begin{remk}\label{remk:tame-fiber-prod}
The reader would notice in the above proof that the only reason to assume $S$ 
to the spectrum of a field was to ensure that the moduli space has the flat 
base change. However, this base change also holds if all the stacks considered 
are tame by \cite[Lemma~2.3.3]{Ab-V}. Thus we see that if we are working in the
category of tame stacks, then Proposition~\ref{prop:mod-sch} holds over
any noetherian and affine base $S$.
\end{remk}
\begin{cor}\label{cor:mod-sch*}
Let ${\left(\wt{{\sD}{\sM}}_S\right)}_{Nis}$ and 
${\left(\wt{{\sD}{\sM}}_S\right)}^r_{Nis}$ denote the restrictions of 
${\left({\sD}{\sM}_S\right)}_{Nis}$ and ${\left({\sD}{\sM}_S\right)}^r_{Nis}$ 
respectively to the full subcategory of those stacks which have coarse moduli 
schemes. Then ${\left(\wt{{\sD}{\sM}}_S\right)}_{Nis}$ 
and ${\left(\wt{{\sD}{\sM}}_S\right)}^r_{Nis}$ are Grothendieck sites.
\end{cor}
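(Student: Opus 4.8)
The plan is to show that the Nisnevich topology on ${\sD}{\sM}_S$ constructed in Lemma~\ref{lem:Nis-cite*} restricts cleanly to the full subcategory $\wt{{\sD}{\sM}}_S$. Since all of the axioms of a Grothendieck topology — identity covers, stability under base change, and transitivity — have already been verified on the ambient category ${\sD}{\sM}_S$, the only thing left to check is that each of these operations keeps us inside $\wt{{\sD}{\sM}}_S$, that is, inside the class of stacks whose coarse moduli space is a scheme.

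The first point is that the members of a Nisnevich cover of an object of $\wt{{\sD}{\sM}}_S$ again lie in $\wt{{\sD}{\sM}}_S$. Given $X \in \wt{{\sD}{\sM}}_S$ and a cover $\{X_i \xrightarrow{f_i} X\}$, each $f_i$ is \'etale and hence quasi-finite. By Theorem~\ref{thm:mod-sp} every $X_i$ has a coarse moduli space $\un{X_i}$, and the coarse moduli construction is functorial, so $f_i$ induces a morphism $\un{X_i} \to \un{X}$ of the type appearing in Lemma~\ref{lem:mod-sp*}. Applying Lemma~\ref{lem:mod-sp*}(v) to $f_i$ — using that $\un{X}$ is a scheme by hypothesis — shows that $\un{X_i}$ is a scheme as well. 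Thus every member of the cover is an object of $\wt{{\sD}{\sM}}_S$.

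The second point concerns base change. Here I would invoke Proposition~\ref{prop:mod-sch} (together with Remark~\ref{remk:tame-fiber-prod} when working over a general noetherian affine base with tame stacks), which guarantees that $\wt{{\sD}{\sM}}_S$ is closed under fiber products. Consequently, for a morphism $Y \to X$ in $\wt{{\sD}{\sM}}_S$ the pullbacks $X_i {\times}_X Y$ again have coarse moduli schemes, so the base-changed family $\{X_i {\times}_X Y \to Y\}$ is a Nisnevich cover lying entirely within $\wt{{\sD}{\sM}}_S$; its covering property is inherited verbatim from Lemma~\ref{lem:Nis-cite*}. Transitivity and the fact that isomorphisms are covers pass to the subcategory without any extra work, which establishes that ${\left(\wt{{\sD}{\sM}}_S\right)}_{Nis}$ is a Grothendieck site.

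For the representable variant ${\left(\wt{{\sD}{\sM}}_S\right)}^r_{Nis}$ the same two observations apply, and I would additionally note that representability is preserved under composition (needed for transitivity) and under base change (as recorded in the discussion preceding the definition of ${\left({\sD}{\sM}_S\right)}^r_{Nis}$), so the representable Nisnevich covers form a Grothendieck topology on $\wt{{\sD}{\sM}}_S$ as well. The only genuine inputs are Lemma~\ref{lem:mod-sp*}(v) and Proposition~\ref{prop:mod-sch}; everything else is formal inheritance of the topology axioms, so I do not anticipate a serious obstacle beyond correctly assembling these two facts.
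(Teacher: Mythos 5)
Your proof is correct and follows essentially the same route as the paper, whose entire argument is to cite Proposition~\ref{prop:mod-sch} (closure of $\wt{{\sD}{\sM}}_S$ under fiber products) and let the remaining topology axioms be inherited formally from Lemma~\ref{lem:Nis-cite*}. Your additional observation that Lemma~\ref{lem:mod-sp*}(v) forces every member of a Nisnevich cover of an object of $\wt{{\sD}{\sM}}_S$ to lie again in $\wt{{\sD}{\sM}}_S$ is a correct and worthwhile elaboration of what the paper leaves implicit.
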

\begin{proof} Follows immediately from Proposition~\ref{prop:mod-sch}.
\end{proof}
\begin{remk}\label{remk:G-maps}
Let $G$ be an affine and smooth group scheme over $S$ and let $Sch^G_S$
denote the category of noetherian and separated schemes of finite type over
$S$ with $G$-action and $G$-equivariant morphisms. For a $G$-scheme
$X$, let $[X/G]$ denote the associated quotient stack. It follows essentially
from the definitions that the atlas $X \to [X/G]$ is strongly representable. 
Moreover, for a $G$-equivariant map $X \xrightarrow{f} Y$, the associated map 
of stacks $[X/G] \to [Y/G]$ is always representable. In particular, it is 
strongly representable if $f$ is \'etale. We conclude that if we
consider the $G$-schemes with proper action, then $Sch^G_S$ is canonically
equivalent to a subcategory (not full) $\wt{Sch}^G_S$ of ${\sD}{\sM}_S$
whose Nisnevich topology is in fact the restriction of the Nisnevich site
${\left({\sD}{\sM}_S\right)}^r_{Nis}$. 
 \end{remk}
\begin{exm}\label{exm:Nis-cov}
Let $G$ be a smooth affine group scheme over $S$ and let $Sch^G_S$ be as
in Remark~\ref{remk:G-maps}. The following example shows that for a 
$G$-equivariant map $X \xrightarrow{f} Y$ which is a Nisnevich cover of $Y$ in 
the category ${Sch}/S$ of schemes over $S$, the associated map of quotient 
stacks need not be a Nisnevich cover.   

Let $k$ be a field with $char(k) \neq 2$. Let $G = {\Z}/2$ acting trivially on
$Y = {\rm Spec}(k)$. Let $X = {\rm Spec}(k) \coprod {\rm Spec}(k)$ where
$G$ acts by switching the components. The natural morphism $f : X \to Y$ which
is identity on both components, is clearly $G$-equivariant. Moreover,
$f$ is obviously a Nisnevich covering map of schemes if we forget the group
action ({\sl cf.} \cite[Remark~4.1]{Serpe1}). However, the associated
map $\ov{f}: [X/G] \to [Y/G]$ in the category ${\sD}{\sM}_k$ is \'etale but 
not a Nisnevich cover. This is because the isotropy group of every point of 
$[X/G]$ is trivial while the isotropy group of the only point of $[Y/G]$ is all 
of $G$. Hence by Lemma~\ref{lem:gerbeiso}, $\ov{f}$ can not be a Nisnevich 
cover. In fact, it is not very difficult to see that a morphism $g :
W \to [k/G]$ from a stack is a Nisnevich cover if and only if this map is
\'etale and has a section.
\end{exm}

\section{Nisnevich topology via $cd$-structure}\label{section:cdS}
In \cite{Voev1}, Voevodsky introduced the notion of $cd$-structures on
a category $\sC$. It turns out that such a $cd$-structure naturally defines a 
Grothendieck topology on $\sC$. He also showed that a Grothendieck
topology which comes from a $cd$-structure, has several nice properties.
We refer the reader to \cite{Voev1} and \cite{Voev2} for applications in the
homotopy theory of simplicial sheaves on schemes. It was also shown by
Voevodsky that the Nisnevich topology on schemes is induced by a 
$cd$-structure. In this section, we use these ideas of Voevodsky to define
a $cd$-structure on ${\sD}{\sM}_S$ and then show that the resulting 
Grothendieck topology coincides with the Nisnevich topology as defined 
in Section~\ref{section:NIS}. This will be later used to prove the Nisnevich
descent for the $K$-theory of perfect complexes on stacks.

Let $\sC$ be a category with an initial object. Recall from \cite{Voev1} 
that a $cd$-structure on $\sC$ is a collection $P$ of commutative squares of 
the form 
\begin{equation}\label{eqn:Dist-square}
\xymatrix@C.8pc{
B \ar[r] \ar[d] & Y \ar[d]^{p} \\  
A \ar[r]_{e} & X}
\end{equation}
such that if $Q \in P$ and $Q'$ is isomorphic to $Q$, then $Q' \in P$.
The squares of the collection $P$ are called the {\sl distinguished squares}
of the $cd$-structure. One can define different $cd$-structures and/or
restrict them to subcategories considering only the squares which lie in the
corresponding subcategory. In particular, for an object $X$ of $\sC$, 
the $cd$-structure of $\sC$ defines a $cd$-structure on the category  
${\sC}/X$.

We define the topology $t_P$ associated with a $cd$-structure $P$ as the
smallest Grothendieck topology such that for a distinguished square of
the form ~\eqref{eqn:Dist-square}, the morphisms $\{Y \xrightarrow{p} X,
A \xrightarrow{e} X\}$ form a covering and such that the empty sieve
is a covering of the initial object. 
The class of simple coverings in this topology is the smallest class of
families which contains all isomorphisms and satisfies the condition that
for a distinguished square of the form ~\eqref{eqn:Dist-square} and
families $\{p_i: Y_i \to Y\}$, $\{q_j :A_j \to A\}$ in this class, the family
$\{p \circ p_i, e \circ e_j\}$ is also a covering in this family. In other
words, a simple covering is an iteration of the coverings coming from 
distinguished squares.
 
We now define a $cd$-structure on ${\sD}{\sM}_S$. Note that this category
has an initial object in terms of the empty scheme.
\begin{defn}\label{defn:Nis-cd}
A square in ${\sD}{\sM}_S$ is called {\sl distinguished} if it is
a Cartesian diagram of the form ~\eqref{eqn:Dist-square},
where $e$ is an open immersion of stacks with complement $Z$ and $p$ is an 
\'etale morphism (not necessarily representable) of stacks such that the 
induced morphism $Z {\times}_X Y \to Z$ is an isomorphism of the associated
reduced closed substacks. 
\end{defn}
It is clear from the definition that these distinguished squares define a 
$cd$-structure on ${\sD}{\sM}_S$. Let $t_{Nis}$ denote the Grothendieck 
topology on ${\sD}{\sM}_S$ defined by this $cd$-structures. We shall call
this as the {\sl Nisnevich cd}-topology on ${\sD}{\sM}_S$.

Let $t^r_{Nis}$ denote the Grothendieck topology on ${\sD}{\sM}_S$ 
defined by the class of those distinguished squares of the form
~\eqref{eqn:Dist-square}, where the map $p$ is also assumed to be
representable. We shall often call the corresponding $cd$-structure
as the {\sl representable Nisnevich $cd$-structure}.
We denote the restriction of $t^r_{Nis}$ on the full subcategory
$\wt{{\sD}{\sM}}_S$ by $\wt{t}^r_{Nis}$.

It is clear from the above that every covering in the Nisnevich 
$cd$-topology is also a covering in the Nisnevich topology on
${\sD}{\sM}_S$. Our aim now is to show that both define the same topology
on ${\sD}{\sM}_S$. We follow the argument of Voevodsky 
\cite[Proposition~2.17]{Voev2} in order to prove this.

We recall that a morphism $f : Y \to X$ of stacks is called {\sl split} if
there is a sequence of closed embeddings 
\begin{equation}\label{eqn:s-sequence}
\emptyset = Z_{n+1} \to Z_n \to \cdots \to Z_1 \to Z_0 = X
\end{equation}
such that for each $i = 0, \cdots , n$, the morphism $(Z_i - Z_{i+1}) 
{\times}_X Y \to (Z_i - Z_{i+1})$ has a section.
\begin{lem}\label{lem:section}
Let $X$ be an irreducible stack with the generic residual gerbe $\eta_X$ 
and let $Y$ be an irreducible and noetherian $S$-scheme with the generic 
point $\eta_Y$. Let $t : \eta_Y \to \eta_X$ be a morphism. Then, there is an 
affine open subset $U \subset Y$ and morphism $\wt{t} : U \to X$ which 
restricts to $t$ on $\eta_Y$.
\end{lem}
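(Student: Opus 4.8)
The statement asserts that a morphism from the generic point of a scheme into the generic residual gerbe of a stack can be spread out to a morphism from an affine open neighborhood. The plan is to use the coarse moduli space to reduce the problem to a purely scheme-theoretic spreading-out statement, then lift back. First I would invoke Theorem~\ref{thm:mod-sp} to pass to the coarse moduli schemes: let $\un{X}$ be the coarse moduli space of $X$ and recall from the discussion preceding Lemma~\ref{lem:gerbe} that the generic residual gerbe $\eta_X$ comes equipped with a finite-type map $\eta_X \to \Spec(k(\eta_X))$ exhibiting $\Spec(k(\eta_X))$ as its coarse moduli scheme, where $k(\eta_X)$ is the residue field at the generic point of $X$ (equivalently, the function field of $\un{X}$ by Lemma~\ref{lem:gerbe}). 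Since $X$ is irreducible, $k(\eta_X)$ is the generic residue field of $\un{X}$.

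\textbf{Reduction to schemes.} The composite $\eta_Y \xrightarrow{t} \eta_X \to \Spec(k(\eta_X))$ is a map of residue fields; since $Y$ is an integral noetherian scheme with function field $k(\eta_Y)$, this is classical: a morphism $\Spec(k(\eta_Y)) \to \Spec(k(\eta_X))$ of the generic point spreads out, after possibly shrinking, to a dominant rational map defined on a nonempty affine open $U_0 \subset Y$ landing in $\un{X}$. This is the standard spreading-out of rational maps for finite-type schemes (one clears denominators: the finitely many generators of $k(\eta_X)$ over the base, and their relations, extend over a distinguished affine open). So I obtain $\wt{t}_0 : U_0 \to \un{X}$ restricting to the given map on $\eta_Y$.

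\textbf{Lifting through the gerbe.} The main work is now to lift $\wt{t}_0 : U_0 \to \un{X}$ through the coarse moduli projection $p : X \to \un{X}$, compatibly with the given lift $t$ over the generic point. Form the fiber product $U_0 \times_{\un{X}} X \to U_0$; because the generic point already lifts (we have $t$ over $\eta_Y$), the generic fiber of this pullback admits a section determined by $t$. The residual gerbe $\eta_X$ is, over a finite extension of $k(\eta_X)$, a neutral gerbe $[\Spec(l)/G_\eta]$ with $G_\eta$ a finite \'etale group scheme (see the discussion around \eqref{eqn:point} and Lemma~\ref{lem:gerbeiso}); since the gerbe structure $\eta_X \to \Spec(k(\eta_X))$ becomes trivial (neutral) after an \'etale cover, the section $t$ over the generic point of $Y$ spreads out to a section over a smaller affine open $U \subset U_0$. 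Concretely, I would pull back the \'etale atlas $V \to X$ (whose existence is guaranteed by the definition of a Deligne-Mumford stack) along $\wt{t}_0$ and use that $t$ picks out a point of the generic fiber of $V\times_{\un X} U_0 \to U_0$ with the correct residue data; the splitting/section then extends over a neighborhood by the openness of the image of an \'etale morphism together with Hensel-type spreading of the chosen section.

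\textbf{The main obstacle.} The hard part will be the lifting step: while spreading out a rational map of schemes is routine, ensuring the lift respects the $2$-categorical structure of the stack --- i.e. that the section of the gerbe over $\eta_Y$ genuinely extends to a $1$-morphism $U \to X$ (not merely into $\un X$) together with the required $2$-isomorphism data --- requires care. The key point making this work is that $\eta_X \to X$ is a monomorphism (recalled before Lemma~\ref{lem:gerbe}) and that $\eta_X$ is a gerbe over the spectrum of a field, so its sections over a base are controlled by \'etale-local triviality; once a section exists generically, the torsor it defines under $G_\eta$ is \'etale-locally trivial, and by noetherianity of $Y$ the locus where it remains trivial contains a nonempty open set, which I shrink to an affine $U$. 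I would then set $\wt{t}: U \to X$ to be the resulting lift and verify it restricts to $t$ on $\eta_Y$ by construction.
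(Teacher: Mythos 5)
Your first step (spreading the composite $\eta_Y \to \eta_X \to \Spec\left(k(\eta_X)\right)$ out to a map $U_0 \to \un{X}$) is fine, modulo the remark that Theorem~\ref{thm:mod-sp} only gives an algebraic space $\un{X}$, not a scheme. The genuine gap is in the lifting step, which you rightly single out as the main obstacle but do not actually prove; the arguments you offer for it fail as stated. First, the pullback $U_0 \times_{\un{X}} X \to U_0$ is \emph{not} a gerbe: the coarse map $p : X \to \un{X}$ is only proper and quasi-finite, and the stabilizer groups can jump on special fibres (e.g.\ $[\A^1/(\Z/2)]$ with the sign action), so only the generic fibre $\eta_Y \times_{\Spec k(\eta_X)} \eta_X$ is a gerbe (Lemma~\ref{lem:gerbe}). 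Hence sections of $U_0 \times_{\un{X}} X$ over opens of $U_0$ are not classified by $G_{\eta}$-torsors, and the sentence about ``the torsor it defines under $G_\eta$'' has no referent; moreover, ``by noetherianity the locus where it remains trivial contains a nonempty open set'' is not a valid principle (pointwise triviality loci of torsors are not open). Second, the scheme $V \times_{\un{X}} U_0$ you propose to use is \emph{not} \'etale over $U_0$: it factors as the \'etale map $V \times_{\un{X}} U_0 \to X \times_{\un{X}} U_0$ followed by the pullback of the non-\'etale coarse map, so ``openness of the image of an \'etale morphism'' does not apply; and ``Hensel-type spreading'' yields statements over henselian local rings, not over Zariski opens of $Y$. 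The one general principle that would repair the step is the finite-presentation limit argument (a morphism from $\eta_Y = \varprojlim U$ to a stack locally of finite presentation factors through some $U$, cf.\ [LMB,~(4.18)]) --- but once you invoke that, it proves the lemma in one line, and the entire detour through $\un{X}$ buys nothing.

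For comparison, the paper avoids these issues by localizing the stack instead of passing to its coarse space: by [LMB, Th\'eor\`eme~6.2] there is a representable, \'etale and separated morphism $[W/G] \to X$ hitting the generic point, with $W$ affine, irreducible and noetherian and $G$ a finite group, so one may assume $X = [W/G]$. A morphism to $[W/G]$ is then, concretely, a principal $G$-bundle together with a $G$-equivariant map to the affine scheme $W$. The bundle over $\eta_Y$ extends to a principal $G$-bundle over an affine open of $Y$ because $G$ is finite; the equivariant map extends (a priori non-equivariantly) over an open set because $W$ is affine, by clearing denominators; and equivariance is restored by intersecting the finitely many $G$-translates of that open set. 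Every spreading-out step there is elementary and scheme-theoretic, which is exactly the content missing from your lifting step.
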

\begin{proof} We can assume that $Y = {\rm Spec}(A)$ is affine. Let
$(x, K)$ be a representative of the generic point of $X$ with the residual
gerbe $\eta_X$. Then by \cite[Th\'eo\'rem~6.2]{LMB}, there is an irreducible
affine and noetherian scheme $W$ with an action of a finite group $G$ and a 
Cartesian diagram
\begin{equation}\label{eqn:section1}
\xymatrix@C.9pc{
{\rm Spec}(K) \ar[r] \ar[d]_{id} & [W/G] \ar[d]^{\phi} \\
{\rm Spec}(K) \ar[r] & X,}
\end{equation}
where $\phi$ is representable, \'etale and separated. Thus, we can assume
that our stack $X$ is of the form $[W/G]$, where $G$ is a finite group 
acting on an irreducible, affine and noetherian scheme $W$.

Since a map $Z \to X = [W/G]$ is equivalent to a diagram
\begin{equation}\label{eqn:section2}
\xymatrix@C.9pc{
Z' \ar[r]^{f} \ar[d]_{\pi} & W \ar[d]^{p} \\
Z \ar[r] & X,}
\end{equation}
where $\pi$ is a principal $G$-bundle and $f$ is a $G$-equivariant map,
we see that the morphism $t : \eta_Y \to \eta_X \inj X$ is equivalent to
a principal $G$-bundle $T \xrightarrow{\pi} \eta_Y$ and a $G$-equivariant
map $T \xrightarrow{f} W$. Since $G$ is a finite group, the principal 
$G$-bundle $\pi$ extends to a principal $G$ bundle 
$V' \xrightarrow{\wt{\pi}} V$,
where $V \subset Y$ is an affine open subset. Thus, we can replace $Y$ by
the appropriate open set and get a diagram of the fom

\begin{equation}\label{eqn:section3}
\xymatrix@C.9pc{
Y' \ar[d]_{\wt{\pi}} & T \ar[r]^{f} \ar[l] \ar[d]^{\pi} & W \ar[d]^{w} \\
Y & \eta_Y \ar[l] \ar[r]_{t} & X,}
\end{equation}
where all the vertical maps are $G$-bundles and the top horizontal map
is $G$-equivariant. Since $T$ is the generic fiber of $\wt{\pi}$, it is
the set of generic points of $Y'$ and hence is a finite set. In particular,
there is an open subset $V' \subset Y'$ containing $T$ and a morphism
(not necessarily $G$-equivariant) $f' : V' \to W$ which extends the map 
$f$.  Put 
\[
\wt{T} = {\underset{g \in G}\bigcap} gV' \subset V' \ {\rm and \ let} \ \ 
 \wt{f} = {f'}|_{\wt{T}}.
\]
Since $V'$ contains all the generic points of $Y'$, it is dense and hence
$gV'$ is dense open in $Y'$ for all $g \in G$. This implies that $\wt{T}$
is open dense in $Y'$ which is now $G$-invariant. Letting $U = {\wt{T}}/G$,
we see that the map $t$ extends to a map $\wt{t}: U \to X$ where 
$U$ is an open subscheme of $Y$. By shrinking, we can assume further that
$U$ is affine.  
\end{proof}
\begin{lem}\label{lem:section*}
Let $f : Y \to X$ be an \'etale morphism of irreducible stacks which is an
isomorphism at the generic residual gerbes. Then there are dense open substacks
$V \inj Y$ and $U \inj X$ such that $g = f|_{V} : V \to U$ is an isomorphism.
\end{lem}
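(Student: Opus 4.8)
The plan is to produce a dense open substack $V \inj Y$ on which $f$ becomes a monomorphism; since $f$ is \'etale, an \'etale monomorphism is a representable open immersion, so $U := f(V) \inj X$ will be open and $f|_V : V \to U$ the desired isomorphism. (One could instead promote the isomorphism $\eta_Y \xrightarrow{\cong} \eta_X$ to a section of $f$ over a dense open of $X$ by combining \lemref{lem:section} with an atlas, but that route forces one to descend the section from the atlas to $X$; the argument below sidesteps this.) To carry it out I would set $W = Y {\times}_X Y$ with projections $q_1, q_2 : W \to Y$, both \'etale. Since $f$ is \'etale, hence unramified, the relative diagonal $\Delta_f : Y \to W$ is a representable open immersion; write $\Delta \inj W$ for its open image and $C = W \setminus \Delta$ for the closed complement.

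\textbf{Locating the diagonal.} First I would check, after passing to an \'etale atlas and reducing to schemes, that the only point of $Y$ lying over the generic point $\eta_X$ is $\eta_Y$ itself: an \'etale morphism preserves codimension, so a point mapping to the generic point $\eta_X$ must be generic in $Y$, hence equal to $\eta_Y$ by irreducibility. Combined with the hypothesis that $\eta_Y \to \eta_X$ is an isomorphism, this shows the generic fibre $q_1^{-1}(\eta_Y) = \eta_Y {\times}_X Y$ is supported on the single diagonal point, so $q_1^{-1}(\eta_Y) \inj \Delta$ and thus $q_1^{-1}(\eta_Y) \cap C = \emptyset$; in particular $\eta_Y \notin q_1(C)$. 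As all stacks here are noetherian and of finite type over $S$, the image $q_1(C)$ is a constructible subset of $Y$ (checked on an atlas via Chevalley's theorem). A dense constructible subset of the irreducible $Y$ would contain a dense open, hence $\eta_Y$; since $\eta_Y \notin q_1(C)$ this forces $\overline{q_1(C)} \neq Y$. I would then set $V = Y \setminus \overline{q_1(C)}$, a dense open substack containing $\eta_Y$.

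\textbf{Conclusion.} By construction $q_1^{-1}(V) \cap C = \emptyset$, i.e. $q_1^{-1}(V) = V {\times}_X Y \inj \Delta$; restricting the second factor further gives $V {\times}_X V \inj \Delta$. Since every point of $\Delta = \Delta_f(Y)$ is diagonal, one gets $V {\times}_X V = \Delta_{f|_V}(V)$, so the diagonal of $f|_V$ is an isomorphism and $f|_V : V \to X$ is a monomorphism. A monomorphism of stacks is representable, and a representable \'etale monomorphism is an open immersion; hence $U := f(V) \inj X$ is open and $f|_V : V \xrightarrow{\cong} U$. Finally $U \ni f(\eta_Y) = \eta_X$, so $U$ is dense in the irreducible $X$, while $V$ is dense in $Y$ by construction, yielding the two dense open substacks of the statement. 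I expect the main obstacle to be the middle step: verifying that the generic fibre lies inside the open diagonal $\Delta$. This is exactly where the hypothesis that $f$ is an \emph{isomorphism} on generic residual gerbes (not merely dominant and \'etale) enters, and it must be reduced to the corresponding statement for schemes through an atlas.
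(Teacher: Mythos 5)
Your argument hinges on the claim that, since $f$ is \'etale and hence unramified, the relative diagonal $\Delta_f : Y \to Y\times_X Y$ is a representable open immersion. That is exactly the half of \lemref{lem:Unramified} that requires $f$ to be \emph{representable}; for a general unramified morphism of Deligne-Mumford stacks the diagonal is only \'etale, and it is a monomorphism (hence possibly an open immersion) if and only if $f$ is representable. The present lemma makes no representability hypothesis, and it cannot afford to: it feeds (via \propref{prop:section-M}) into \thmref{thm:split-M}, where the \'etale map is a disjoint union of members of a Nisnevich cover in ${\sE}t(X)$, and by \remref{remk:topos} and \defref{defn:Nis-cite} such maps are allowed to be non-representable. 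For non-representable $f$ your proof breaks in two places. First, $\Delta := \Delta_f(Y)$ is still open ( \'etale maps are open), but $\Delta_f : Y \to \Delta$ need not be an isomorphism, so ``every point of $\Delta$ is diagonal'' holds only set-theoretically. Second, and fatally, in the last step the containment $V \times_X V \inj \Delta$ then only shows that $\Delta_{f|_V} : V \to V\times_X V$ is an \'etale map that is bijective on points, and for stacks this does not imply it is an isomorphism: $[{\rm Spec}(k)/G] \to {\rm Spec}(k)$, for $G$ a finite group of order prime to ${\rm char}(k)$, is \'etale and bijective on points but is not a monomorphism. So you cannot conclude that $f|_V$ is a monomorphism, which is the heart of your strategy.

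The gap is fixable, but the fix is precisely the step you skipped, and it is where the hypothesis of isomorphism on generic residual gerbes really enters (not, as you suggest, only in placing the generic fibre inside $\Delta$). One must first show that the hypotheses force $f$ to be representable on the nose: the relative inertia $I_{Y/X} = Y \times_{Y\times_X Y} Y \to Y$ is representable (it is a base change of the representable morphism $\Delta_f$), \'etale and separated; its identity section is therefore an open and closed immersion, so $I_{Y/X}$ splits off an open-and-closed complement $I'$ whose image in $Y$ is open. By \lemref{lem:gerbeiso}, the isomorphism of generic gerbes makes the fibre of $I_{Y/X}$ over $\eta_Y$ trivial, so this open image misses $\eta_Y$ and hence is empty because $Y$ is irreducible; thus $I_{Y/X}$ is trivial and $f$ is representable. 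With that in hand your diagonal argument does go through, and it is then genuinely different from the paper's proof, which instead presents $X$ as $[W/G]$ with $G$ finite acting on an affine scheme, uses \lemref{lem:section} to build a section of the pull-back $\wt{Y} \to W$ over a dense open, makes the section $G$-invariant by intersecting its $G$-translates, and descends the resulting isomorphism to the quotient stacks.
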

\begin{proof} Since $X$ is a separated Deligne-Mumford stack which is 
irreducible, we can use \cite[Th\'eor\'eme~6.1]{LMB} and assume that 
$X = [W/G]$, where $G$ is a finite group acting on an irreducible affine
and noetherian scheme $W$. Thus we have a commutative diagram
\begin{equation}\label{eqn:section*3}
\xymatrix@C.6pc{
\wt{Y} \ar[r]^{p} \ar[d]_{g} & Y \ar[d]^{f} & \eta_Y \ar[l] \ar[d]^{\cong} \\
W \ar[r]_{q} & X & \eta_X \ar[l]}
\end{equation}
where $\eta_X$ and $\eta_Y$ are the generic residual gerbes of $X$ and $Y$
respectively and the square on the left is Cartesian. In particular,
$\wt{Y}$ is a noetherian and separated Deligne-Mumford stack with an action
of $G$ such that $g$ is $G$-equivariant and the horizontal maps on the
left square are principal $G$-bundles. Let $\zeta$ denote the generic point
of $W$ and let $\wt{Y}_{\zeta}$ be the generic fiber of $g$. Since $\zeta$ is
the only point which maps onto $\eta_X$, the isomorphism of the right
vertical vertical map implies that the map $\wt{Y}_{\zeta} \to \zeta$
is an isomorphism. This in turn implies that $\wt{Y}$ has only one
generic point with the residual gerbe $\theta = \wt{Y}_{\zeta}$.

By Lemma~\ref{lem:section}, there is an affine open subset $\wt{V} \subset W$
and a map $h': \wt{V} \to \wt{Y}$ such that the composite $\wt{V} \to 
g^{-1}(\wt{V}) \to \wt{V}$
is identity as the composite is identity on the generic points.
Putting $\wt{U} =  {\underset{g \in G}\cap} g\wt{V}$, we see that 
$\wt{U} \subset W$ is $G$-invariant and there is a $G$-equivariant map 
$h: \wt{U} \to \wt{Y}$ such that the composite 
$\wt{U} \to g^{-1}(\wt{U}) \to \wt{U}$ is identity. Since $g$ is \'etale and 
the composite $g \circ h$ is identity, we 
see that $\wt{U}$ maps isomorphically onto an open substack $\wt{V'}$
of $\wt{Y}$ which is $G$-invariant. Putting $V = [{\wt{V'}}/G]$ and
$U = [{\wt{U}}/G]$, we see that $V$ is an open dense substack of $Y$ which
maps isomorphically to an open dense substack $U$ of $X$. 
\end{proof}
\begin{prop}\label{prop:section-M}
Let $f :Y \to X$ be an \'etale morphism of stacks. Assume that for every 
generic residual gerbe $\eta$ of $X$, there is a generic residual gerbe 
$\zeta$ of $Y$ which maps isomorphically onto $\eta$. Then there is an open 
substack $V$ of $Y$ containing a generic residual gerbe which maps 
isomorphically onto an open substack $U$ of $X$ containing a generic residual
gerbe.
\end{prop}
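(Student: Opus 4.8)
The plan is to reduce to the irreducible case, which is \lemref{lem:section*}, by isolating a single irreducible component of each stack; this uses only that a noetherian stack has finitely many irreducible components (as recalled before \lemref{lem:gerbe}).

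First I would fix the gerbes to work with. Write $X_1,\dots,X_m$ for the irreducible components of $X$, endowed with their reduced structures, and choose the generic residual gerbe $\eta$ to be the one at the generic point of $X_1$. By the hypothesis there is a generic residual gerbe $\zeta$ of $Y$ with $\zeta\xrightarrow{\cong}\eta$; let $Y_1$ be the component of $Y$ whose generic residual gerbe is $\zeta$, and let $Y_1,\dots,Y_n$ be the components of $Y$. Since the components are the maximal irreducible closed substacks, the generic point of $X_1$ lies in no $X_i$ with $i\neq1$, and the generic point of $Y_1$ lies in no $Y_j$ with $j\neq1$.

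Next I would carve out irreducible open neighborhoods of these generic points. Put $U_0=X\setminus\bigcup_{i\neq1}X_i$; this is open in $X$, is contained in $X_1$, and is a nonempty open subset of the irreducible substack $X_1$, hence irreducible with generic residual gerbe $\eta$. Put $Y_0=f^{-1}(U_0)\setminus\bigcup_{j\neq1}Y_j$; this is open in $Y$ and contained in $Y_1$. Because $f$ sends the generic point of $Y_1$ to the generic point of $X_1$ (as $\zeta\xrightarrow{\cong}\eta$) and this image lies in $U_0$ but in no $Y_j$ with $j\neq1$, the generic point of $Y_1$ lies in $Y_0$; thus $Y_0$ is a nonempty open subset of the irreducible $Y_1$, hence irreducible with generic residual gerbe $\zeta$. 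By construction $f(Y_0)\subseteq U_0$, so $f$ restricts to $f_0:Y_0\to U_0$, and $f_0$ is \'etale because it factors as the open immersion $Y_0\hookrightarrow f^{-1}(U_0)$ followed by the \'etale base change $f^{-1}(U_0)\to U_0$ of $f$.

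Finally, $f_0:Y_0\to U_0$ is an \'etale morphism of irreducible stacks which is an isomorphism on the generic residual gerbes, so \lemref{lem:section*} yields dense open substacks $V\hookrightarrow Y_0$ and $U\hookrightarrow U_0$ with $f_0|_V:V\to U$ an isomorphism. Since $Y_0$ and $U_0$ are open in $Y$ and $X$, so are $V$ and $U$; being dense in the irreducible stacks $Y_0$ and $U_0$, they contain the respective generic points, so $V$ contains the generic residual gerbe $\zeta$ and $U$ contains the generic residual gerbe $\eta$, while $f|_V=f_0|_V$ is an isomorphism onto $U$. The only subtle point is that deleting the finitely many other components must isolate irreducible neighborhoods of the two chosen generic points without deleting them; this is exactly where maximality of the components and the noetherian finiteness are used, and it also frees me from having to show separately that $f$ carries $Y_1$ into $X_1$, since the restriction to $Y_0\subseteq f^{-1}(U_0)$ lands in $U_0$ automatically.
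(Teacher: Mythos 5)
Your proof is correct and follows essentially the same route as the paper: both delete the union of the non-distinguished irreducible components of $X$ and of $Y$ to produce irreducible open neighborhoods of the two chosen generic residual gerbes (your $Y_0$ is literally the paper's $U\times_X(Y-W')$), and then invoke Lemma~\ref{lem:section*} on the resulting \'etale map of irreducible stacks. The only difference is that you spell out the routine verifications (nonemptiness of $Y_0$, \'etaleness of $f_0$) that the paper leaves implicit.
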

\begin{proof} We fix a generic gerbe $\eta$ of $X$ and let $Z \inj X$ be the
irreducible component of $X$ such that $\eta$ is the residual gerbe of the
generic point of $Z$. Let $Z' \inj X$ be the union of all other irreducible
components of $X$ and put $U = X-Z'$. Then $U$ is an open substack of $X$
which contains the generic residual gerbe $\eta$. We choose an irreducible
component $W$ of $Y$ such that the residual gerbe $\zeta$ at the generic point 
of $W$ maps isomorphically onto $\eta$ and let $W'$ be the union of all other
irreducible components of $Y$. Put $V = U {\times}_X (Y - W')$.

We now see that $V$ is an open and irreducible substack of $Y$ containing
the generic residual gerbe $\zeta$ such that $V$ maps to an open and 
irreducible substack $U$ of $X$ containing the generic residual gerbe $\eta$
of $X$. Hence by Lemma~\ref{lem:section*}, there are open substacks $V'
\inj V$ and $U' \inj U$ containing the generic residual gerbes $\zeta$ and
$\eta$ respectively such that $V'$ maps isomorphically onto $U'$. Since
$V'$ and $U'$ are also open substacks of $Y$ and $X$ respectively, we 
get the desired assertion.
\end{proof}
\begin{thm}\label{thm:split-M}
Let $X$ be a stack and let $\{X_i \xrightarrow{f_i} X\}$ be a finite Nisnevich
covering of $X$. Then the morphism $f = \coprod f_i : Y = \coprod X_i \to X$ is
split.
\end{thm}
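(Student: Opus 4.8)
The plan is to argue by Noetherian induction on $X$: at each stage I use Proposition~\ref{prop:section-M} to produce a nonempty open substack of $X$ over which $f$ admits a section, and then recurse on the closed complement. The induction is legitimate because $X$ is noetherian and because, by the base-change property established in the proof of Lemma~\ref{lem:Nis-cite*}, the restriction of a Nisnevich cover to a closed substack is again a finite Nisnevich cover; thus the inductive hypothesis applies to the complement.

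First I would check that the hypotheses of Proposition~\ref{prop:section-M} hold for $f$. Let $\eta = \eta_x$ be the generic residual gerbe attached to an irreducible component of $X$, so that $x$ is the corresponding generic point. Since $f$ is a Nisnevich cover, Definition~\ref{defn:Nis-cite} supplies a point $y \in Y$ with $f(y) = x$ and an isomorphism $\eta_y \xrightarrow{\cong} \eta_x$. Because $f$ is \'etale it preserves the dimension of local rings, so a point $y$ lying over the generic point $x$ is itself a generic point of $Y = \coprod_i X_i$; hence $\eta_y$ is a generic residual gerbe of $Y$ mapping isomorphically onto $\eta$. Proposition~\ref{prop:section-M} then yields open substacks $V \inj Y$ and $U \inj X$, with $U$ containing a generic residual gerbe (and so nonempty as long as $X \neq \emptyset$), such that $g = f|_V : V \to U$ is an isomorphism. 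The composite $U \xrightarrow{g^{-1}} V \inj Y$ is then a section of $f$ over $U$, equivalently a section of $U \times_X Y \to U$.

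Now put $Z = X - U$, a reduced closed substack which is proper in $X$ because $U$ is nonempty. By the base-change remark, $f_Z : Z \times_X Y \to Z$ is a finite Nisnevich cover with $Z \times_X Y = \coprod_i (Z \times_X X_i)$, so by the inductive hypothesis $f_Z$ is split: there is a chain $\emptyset = Z_{m+1} \to Z_m \to \cdots \to Z_0 = Z$ of closed embeddings such that the pullback of $Y$ to each stratum $Z_i - Z_{i+1}$ has a section. Here I use that $(Z_i - Z_{i+1}) \times_Z (Z \times_X Y) = (Z_i - Z_{i+1}) \times_X Y$, since each $Z_i - Z_{i+1}$ is locally closed in $X$. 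Prepending $X$ gives the sequence $\emptyset = Z_{m+1} \to \cdots \to Z_0 = Z \to X$ of the shape required in \eqref{eqn:s-sequence}; over the top stratum $X - Z = U$ we use the section constructed above, and over each lower stratum the section coming from $f_Z$. This exhibits $f$ as split, the base case $X = \emptyset$ being trivial.

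The substantive content is carried by Proposition~\ref{prop:section-M} and the generic-triviality results preceding it, so granting those the remaining difficulty is organizational. The one new point to verify carefully is the reduction in the second paragraph: that a point of $Y$ over a generic point of $X$ with isomorphic residual gerbe is automatically generic in $Y$, which is what converts the pointwise Nisnevich condition into the generic-gerbe hypothesis of Proposition~\ref{prop:section-M}. I expect this, together with the compatibility of strata under the two pullbacks, to be the only steps requiring genuine, if routine, attention.
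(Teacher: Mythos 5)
Your proof is correct and follows essentially the same route as the paper's: verify via the Nisnevich condition and \'etaleness that the generic-gerbe hypothesis of Proposition~\ref{prop:section-M} holds, extract an open substack $U \inj X$ over which $f$ has a section, and conclude by noetherian induction on the closed complement $Z = X - U$. Your additional care about base change to closed substacks and the concatenation of splitting sequences makes explicit what the paper leaves implicit, but the argument is the same.
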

\begin{proof} We choose a generic residual gerbe $\eta$ of $X$. By the 
definition of the Nisnevich covering, there a member $X_i$ of the family
and a point $x_i$ with the residual gerbe $\zeta$ such that $\zeta$ maps
isomorphically onto $\eta$. Since $f$ is \'etale, $x_i$ must be a generic
point of $X_i$ and hence of $Y$. By Proposition~\ref{prop:section-M},
there is an open substack $V$ of $Y$ containing $\zeta$ which maps 
isomorphically onto an open substack $U$ of $X$ containing $\eta$.
In particular, the map $U {\times}_X Y \to U$ has a section. Put $Z = X - U$.
Since $Z$ is a proper closed substack of $X$ and since 
$Z {\times}_X Y \to Z$ is a Nisnevich cover, we conclude from the
noetherian induction that the map $Z {\times}_X Y \to Z$ is split.
Hence the map $f$ is split.
\end{proof}
The following is our main result comparing the Nisnevich and Nisnevich 
cd-sites on stacks.
\begin{thm}\label{thm:Nis-cd-main}
Let $S$ be a noetherian and affine scheme and let $X \in {\sD}{\sM}_S$.
Then a family $\{X_i \xrightarrow{f_i} X\}$ is a covering in the Nisnevich
topology if and only if it is covering in the Nisnevich cd-topology.
In particular, the Grothendieck sites $t_{Nis}$ and 
${\left({\sD}{\sM}_S\right)}_{Nis}$ are equivalent on ${\sD}{\sM}_S$.
The Grothendieck sites $t^r_{Nis}$ and ${({\sD}{\sM}_S)}^r_{Nis}$ are
also equivalent.
\end{thm}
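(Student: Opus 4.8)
The plan is to establish the two inclusions of topologies separately; the inclusion $t_{Nis} \subseteq {({\sD}{\sM}_S)}_{Nis}$ is routine, while the reverse inclusion carries all the weight and rests on the splitting theorem \thmref{thm:split-M}.

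For the easy direction I would argue that every covering in $t_{Nis}$ is a Nisnevich covering in the sense of \defref{defn:Nis-cite}. Since $t_{Nis}$ is by definition the smallest Grothendieck topology in which the two legs $\{A \xrightarrow{e} X,\ Y \xrightarrow{p} X\}$ of a distinguished square (\defref{defn:Nis-cd}) form a covering, and ${({\sD}{\sM}_S)}_{Nis}$ is already a Grothendieck topology by \lemref{lem:Nis-cite*}, it is enough to check that each such pair is a Nisnevich covering. This is immediate from the residual-gerbe criterion together with \lemref{lem:gerbeiso}: a point $x$ in the open part $A$ is hit by $e$ with an isomorphism of residual gerbes, while a point $x$ in the closed complement $Z$ is hit by $p$ with an isomorphism of residual gerbes because $Z \times_X Y \to Z$ is an isomorphism. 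Hence $t_{Nis} \subseteq {({\sD}{\sM}_S)}_{Nis}$.

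For the reverse inclusion I would follow Voevodsky's strategy \cite[Proposition~2.17]{Voev2}. Given a Nisnevich covering $\{X_i \xrightarrow{f_i} X\}$, I pass to a finite subcover (the site is noetherian), set $Y = \coprod_i X_i$ and $f = \coprod_i f_i$, and invoke \thmref{thm:split-M} to split $f$ along a sequence $\emptyset = Z_{n+1} \inj Z_n \inj \cdots \inj Z_0 = X$ with a section of $f$ over each stratum $Z_i - Z_{i+1}$. I then prove by induction on $n$ the sharper statement that the sieve generated by $\{f_i\}$ contains a simple covering of the $cd$-structure each of whose members factors through some $f_i$; as simple coverings are $t_{Nis}$-coverings by construction, this places $\{f_i\}$ in $t_{Nis}$. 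The base case $n = 0$ furnishes a section of $f$ over all of $X$, so the sieve is maximal. For the inductive step I peel off the smallest stratum $Z_n$, which is closed and over which $f$ has a section $s_n$. A section of an étale map is an open immersion, so $s_n(Z_n)$ is open in the closed substack $f^{-1}(Z_n) \inj Y$ and maps isomorphically to $Z_n$; extending it to an open substack $V \inj Y$ with $V \times_X Z_n = s_n(Z_n)$ yields an étale morphism $V \to X$ that is an isomorphism over $Z_n$ on reduced closed substacks. Paired with the open immersion $A = X - Z_n \inj X$ this is precisely a distinguished square with closed complement $Z_n$. Its étale leg $V$ factors through a single $f_i$ and the pullback of $\{f_i\}$ to $V$ admits a section, so $V$ is covered trivially; its open leg $A$ carries the restricted covering, which is split of length $n-1$ because $Z_n \cap A = \emptyset$, so by induction it contains a simple covering refining $\{X_i \times_X A \to A\}$. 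Splicing these along the distinguished square produces the required simple covering of $X$.

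The main obstacle is not the induction but the bookkeeping that makes it run. One must verify at the level of stacks that $s_n(Z_n)$, open in the closed substack $f^{-1}(Z_n)$, extends to an open substack $V$ of $Y$ — this is checked on an étale atlas, where it is the elementary fact that an open subset of a closed subscheme is the trace of an open of the ambient scheme — and one must confirm that the restricted splitting on $A$ truly has length $n-1$ with sections inherited from those on $X$, and that the two refinements splice into a single simple covering all of whose members factor through the original family. Finally, the same argument applied verbatim to distinguished squares with representable étale leg, noting that $A \to X$ and $V \to X$ are representable whenever the $f_i$ are, identifies $t^r_{Nis}$ with ${({\sD}{\sM}_S)}^r_{Nis}$. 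This yields the asserted equivalence of all four sites.
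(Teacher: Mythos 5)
Your proposal follows the paper's proof essentially step for step: the easy inclusion via the minimality of $t_{Nis}$ together with Lemma~\ref{lem:Nis-cite*}, and the hard inclusion via Theorem~\ref{thm:split-M} followed by induction on the length of the splitting sequence, peeling off the smallest stratum $Z_n$ and forming the distinguished square $\{A = X - Z_n \hookrightarrow X,\ V \to X\}$. Your $V$, obtained by ``extending'' the open substack $s_n(Z_n) \subseteq f^{-1}(Z_n)$ to an open substack of $Y$, is exactly the paper's construction: the paper removes from $Y$ the closed substack $W = f^{-1}(Z_n) - s_n(Z_n)$, which is the same open substack described by complements rather than by extension.

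There is, however, one assertion in your inductive step that is false as stated: the claim that ``the \'etale leg $V$ factors through a single $f_i$'' (equivalently, that the pullback of the \emph{family} $\{f_i\}$, as opposed to the single map $f = \coprod_i f_i$, admits a section over $V$). The section $s_n$ is a section of $Z_n \times_X \left(\coprod_i X_i\right) \to Z_n$; if $Z_n$ is disconnected, $s_n$ may send different connected components of $Z_n$ into different $X_i$'s, and then \emph{every} open $V \subseteq Y$ containing $s_n(Z_n)$ meets several $X_i$'s, so $V \to X$ factors through no single $f_i$. Since your inductive statement requires every member of the simple covering to factor through some $f_i$, this is a genuine gap in the bookkeeping you yourself flagged as the delicate point. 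It is easily repaired: instead of covering $V$ ``trivially'' by its identity, cover it by the family $\{V \cap X_i \to V\}_i$. A decomposition of a stack into finitely many disjoint open substacks is a simple covering for this $cd$-structure --- a two-piece decomposition $V = V_1 \coprod V_2$ is a distinguished square in the sense of Definition~\ref{defn:Nis-cd} with $B = \emptyset$, and one iterates --- and each $V \cap X_i \to X$ does factor through $f_i$. Splicing this refinement in place of $\{V\}$ closes your induction. (The paper sidesteps the issue by running the induction for the single split map $f$; but note that passing from ``$f$ is a $t_{Nis}$-covering'' back to ``the family $\{f_i\}$ is a $t_{Nis}$-covering'' requires the same coproduct-decomposition observation, so the repair is needed on either formulation.) The representable case goes through as you say, since $V \cap X_i \to X$ is an open immersion followed by $f_i$, hence representable whenever $f_i$ is.
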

\begin{proof}
We prove the equivalence of the sites $t_{Nis}$ and 
${\left({\sD}{\sM}_S\right)}_{Nis}$. The equivalence between 
$t^r_{Nis}$ and ${({\sD}{\sM}_S)}^r_{Nis}$ follows exactly in the same
without any change once we note that the splitting for a representable
Nisnevich cover is given by the representable maps as is clear from the
proof of Proposition~\ref{prop:section-M}.

We have already seen that a covering in the Grothendieck topology given
by the Nisnevich $cd$-structure is also a Nisnevich covering.
To prove the converse, let $\{X_i \xrightarrow{f_i} X\}$ be a Nisnevich 
covering. Since our stacks are noetherian, we can assume that this a finite
covering. In view of Theorem~\ref{thm:split-M}, it suffices now
to show that an \'etale map $f: \wt{X} \to X$ which is split, is a covering in 
the Nisnevich $cd$-topology. We prove by the induction on the length of the 
splitting sequence. We shall construct a distinguished square of the
form ~\eqref{eqn:Dist-square} based on $X$ such that the pull-back of $f$ to
$Y$ has a section and the pull-back of $f$ to $A$ has a splitting sequence
of length less than the length of the splitting sequence for $f$.
The result will then follow by induction.

Let the map $f$ have the splitting sequence of the form 
~\eqref{eqn:s-sequence}. We take $A = X - Z_n$. To define $Y$, consider the
section $s$ of $Z_n {\times}_X \wt{X} \to Z_n$ which exists by the definition 
of the splitting sequence. Since $f$ is \'etale, it is \'etale over $Z_n$ and 
hence $s$ maps isomorphically onto an open substack $\wt{Z_n}$ of 
$Z_n {\times}_X \wt{X}$. Let $W$ be its complement. Then $W$ is a closed 
substack of $\wt{X}$ and we take $Y = \wt{X} - W$. It is immediate from the
construction that the pull-back square defined by $\{A \to X, Y \to X\}$  
is a distinguished square for the Nisnevich $cd$-structure. Moreover, the 
pull-back of $f$ to $Y$ has a section and the pull-back of $f$ to $A$ has a 
splitting sequence of length less than $n$.
\end{proof}
\begin{cor}\label{cor:Nis-cd-main**}
Let $S$ be the spectrum of a field. Then the Grothendieck sites
$\wt{t}^r_{Nis}$ and ${\left(\wt{{\sD}{\sM}}\right)}^r_{Nis}$ on
$\wt{{\sD}{\sM}}_S$ are equivalent. In other words, the site
${\left(\wt{{\sD}{\sM}}\right)}^r_{Nis}$ is given by a $cd$-structure.
\end{cor}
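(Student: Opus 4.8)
The plan is to show that the argument of \thmref{thm:Nis-cd-main} runs verbatim inside the full subcategory $\wt{{\sD}{\sM}}_S$, so that the restricted representable Nisnevich $cd$-structure is a genuine $cd$-structure on $\wt{{\sD}{\sM}}_S$ whose associated topology is $\wt{t}^r_{Nis}$. Since every covering arising from a distinguished square is a representable Nisnevich covering, only the reverse comparison requires work: given a representable Nisnevich covering $\{X_i \xrightarrow{f_i} X\}$ with all objects in $\wt{{\sD}{\sM}}_S$, I must refine it by simple coverings built from distinguished squares all of whose vertices again lie in $\wt{{\sD}{\sM}}_S$.

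First I would record the closure properties of $\wt{{\sD}{\sM}}_S$ that make this possible. Closure under fibre products is \propref{prop:mod-sch}, and this is exactly where the hypothesis that $S$ is the spectrum of a field is used (it may be replaced by tameness, by \remref{remk:tame-fiber-prod}). For any quasi-finite morphism $f : X' \to X$ in ${\sD}{\sM}_S$ with $X \in \wt{{\sD}{\sM}}_S$, part $(v)$ of \lemref{lem:mod-sp*} shows that the coarse moduli space of $X'$ is again a scheme. Applying this to open immersions, closed immersions and representable \'etale morphisms (all of which are quasi-finite) shows that $\wt{{\sD}{\sM}}_S$ is stable under passage to open substacks, to closed substacks, and to representable \'etale covers. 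It is trivially stable under finite disjoint union, the coarse moduli scheme of $\coprod_i X_i$ being $\coprod_i \un{X_i}$.

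With these closure properties established, I would re-examine the constructions in the proofs of \propref{prop:section-M}, \thmref{thm:split-M} and \thmref{thm:Nis-cd-main}. These only ever form open substacks, their closed complements, finite coproducts, and fibre products such as $Z_n {\times}_X \wt{X}$; by the previous paragraph every such object stays in $\wt{{\sD}{\sM}}_S$. The local models $[W/G]$ used in \lemref{lem:section} and \lemref{lem:section*}, with $W$ affine noetherian and $G$ finite, have affine coarse moduli scheme $\Spec(A^G)$ for $W = \Spec(A)$, and hence already belong to $\wt{{\sD}{\sM}}_S$. In particular, the distinguished square produced in the inductive step of \thmref{thm:Nis-cd-main}, with corners $A = X - Z_n$, $Y = \wt{X} - W$ and $B = A {\times}_X Y$, is a distinguished square for the representable Nisnevich $cd$-structure whose four vertices all lie in $\wt{{\sD}{\sM}}_S$; it is therefore distinguished for $\wt{t}^r_{Nis}$. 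Running the noetherian induction entirely inside $\wt{{\sD}{\sM}}_S$ then exhibits the given covering as a simple covering for $\wt{t}^r_{Nis}$, which together with \corref{cor:mod-sch*} yields the asserted equivalence of sites.

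The main obstacle is thus the stability of $\wt{{\sD}{\sM}}_S$ under the relevant geometric operations, and above all under fibre products: this is the single point at which the standing hypothesis on $S$ is genuinely needed, and is the reason \propref{prop:mod-sch} is invoked. Once stability under open and closed substacks, representable \'etale covers, finite coproducts and fibre products is in hand, no geometric input beyond \thmref{thm:Nis-cd-main} is required and the comparison follows formally.
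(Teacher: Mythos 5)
Your proposal is correct and takes essentially the same approach as the paper: the paper's entire proof reads ``This follows directly from Proposition~\ref{prop:mod-sch} and Theorem~\ref{thm:Nis-cd-main}'', and your argument is precisely the detailed verification behind that one-line claim, namely that the closure of $\wt{{\sD}{\sM}}_S$ under fibre products (the one place the field hypothesis enters, via Proposition~\ref{prop:mod-sch}), open and closed substacks, representable \'etale covers and finite coproducts (via Lemma~\ref{lem:mod-sp*}) allows the splitting induction of Theorem~\ref{thm:Nis-cd-main} to be run entirely inside the subcategory. Nothing in your write-up deviates from, or leaves a gap in, the paper's intended argument.
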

\begin{proof}
This follows directly from Proposition~\ref{prop:mod-sch} and
Theorem~\ref{thm:Nis-cd-main}.
\end{proof} 
\section{Some properties of Nisnevich $cd$-structure}
\label{section:Nis-CRB}
In order to prove the Nisnevich descent for the $K$-theory of perfect
complexes on stacks, we show in this section that the Nisnevich $cd$-structure
on ${\sD}{\sM}_S$ is complete, regular and bounded in the sense of 
\cite{Voev1}. We broadly follow the idea of \cite{Voev2} where this was
shown for schemes.  
We need the following results about the \'etale and unramified morphisms of 
stacks which are well known for schemes. Recall that a morphism $f : X
\to Y$ of stacks is \'etale (resp. unramified) if there are atlases
$V \xrightarrow{v} X$ and $U \xrightarrow{u} Y$ and a diagram 
\begin{equation}\label{eqn:atlas}
\xymatrix@C.8pc{
V \ar[r]^{g} \ar[d]_{v} & U \ar[d]^{u} \\
X \ar[r]_{f} & Y}
\end{equation}
such that $g$ is an \'etale (resp. unramified) map of $S$-schemes.

\begin{lem}\label{lem:Unramified}
A morphism $f : X \to Y$ in ${\sD}{\sM}_S$ is unramified if and only if the 
diagonal morphism ${\Delta}_X : X \to X {\times}_Y X$ is \'etale. 
If $f$ is representable, then it is unramified if and only if
$\Delta_X$ is an open immersion. The map $f$  
is \'etale if and only if it is smooth and unramified. 
\end{lem}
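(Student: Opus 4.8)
The plan is to reduce every assertion to a single chart and to the classical theory of unramified and étale morphisms of schemes and algebraic spaces. Since being smooth, étale, or unramified is by hypothesis of local nature for the étale topology, Definition~\ref{defn:morphism} lets me fix once and for all atlases $v : V \to X$ and $u : U \to Y$ together with a compatible chart $g : V \to U$ (so that $f\circ v \cong u\circ g$) and test all three properties on $g$. The third assertion is then immediate: $f$ is étale iff $g$ is étale, and for the morphism of schemes $g$ the classical equivalence $\text{\'etale} = \text{smooth} + \text{unramified}$ translates back to $f$ being smooth and unramified.

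For the first assertion I would first note that $\Delta_f : X \to X\times_Y X$ is representable: the diagonals $\Delta_{X/S},\Delta_{Y/S}$ are representable by Definition~\ref{defn:stacks} and $X\times_Y X \to X\times_S X$ is representable, so $\Delta_f=\Delta_{X/Y}$ is too. Hence étaleness of $\Delta_f$ may be tested after base change along any étale atlas of $X\times_Y X$, and I would use $V\times_Y V \to X\times_Y X$, which is étale and surjective because it is the base change of $v\times_S v : V\times_S V \to X\times_S X$ along $X\times_Y X \to X\times_S X$. The key computation, using the canonical descriptions $V\times_Y V \cong (V\times_S V)\times_{X\times_S X}(X\times_Y X)$ and $V\times_X V \cong (V\times_S V)\times_{X\times_S X}X$, is the identification of the base change of $\Delta_f$ along this atlas with the natural morphism of algebraic spaces
\[
\pi : V\times_X V \to V\times_Y V, \qquad (a,b,\gamma)\longmapsto (a,b,f\gamma).
\]
Thus $\Delta_f$ is étale iff $\pi$ is étale, a question now internal to algebraic spaces.

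It remains to match étaleness of $\pi$ with unramifiedness of $g$. Since $u$ is étale, $\Delta_u : U \to U\times_Y U$ is an open immersion, and therefore its base change $\iota : V\times_U V \inj V\times_Y V$ is an open immersion as well; on the other hand the classical criterion says $g$ is unramified iff the section $\Delta_g : V \to V\times_U V$ is an open immersion. The bridge is the identity $\pi\circ\Delta_{V/X} = \iota\circ\Delta_g$, where $\Delta_{V/X} : V \inj V\times_X V$ is the identity section of the étale groupoid $V\times_X V\rightrightarrows V$ (an open immersion since $v$ is étale). Granting this, the direction ``$\pi$ étale $\Rightarrow g$ unramified'' is immediate: composing with $\Delta_{V/X}$ shows $\iota\circ\Delta_g$ is étale, and cancelling the open immersion $\iota$ forces $\Delta_g$ to be étale, hence (being a monomorphism) an open immersion. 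The reverse direction is where the real work lies: $g$ unramified makes $\pi$ étale along the \emph{open} identity section $\Delta_{V/X}(V)$, and I would propagate étaleness to all of $V\times_X V$ by right translation in the groupoid — translation by an arrow is an isomorphism of the étale fibres that commutes with $\pi$ up to the corresponding translation on $V\times_Y V$ and carries the identity section onto an arbitrary point. I expect this spreading-out from the identity section to be the main obstacle.

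Finally, for the second assertion I would use that $f$ is representable iff $\Delta_f$ is a monomorphism, i.e.\ iff each ${\rm Aut}_X(va)\to{\rm Aut}_Y(fva)$ is injective, which is exactly the statement that $\pi$ is a monomorphism. The first assertion gives $f$ unramified iff $\pi$ étale; for representable $f$ the map $\pi$ is then an étale monomorphism, hence an open immersion, and by étale descent along the cover $V\times_Y V \to X\times_Y X$ this is equivalent to $\Delta_f$ being an open immersion, which settles the representable case.
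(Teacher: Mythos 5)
Your reductions are all correct: the representability of $\Delta_f$, the identification of its base change along the \'etale surjective map $V\times_Y V \to X\times_Y X$ with $\pi : V\times_X V \to V\times_Y V$, the bridge identity $\pi\circ\Delta_{V/X}=\iota\circ\Delta_g$, the resulting argument that $\pi$ \'etale forces $g$ unramified, and the representable and smooth-plus-unramified assertions all hold up. (For comparison purposes, note the paper contains no argument at all here; it simply cites Rydh \cite[Appendix~B.1, B.2]{DR1}.) The genuine gap is exactly the one you flag yourself: the implication that $g$ unramified makes $\pi$ \'etale. What you actually establish is only that $\pi$ restricted to the open unit section $\Delta_{V/X}(V)\subseteq V\times_X V$ is an open immersion, and the proposed propagation --- ``translation by an arrow is an isomorphism of the \'etale fibres'' --- cannot close the gap as stated. \'Etale means flat and unramified; unramifiedness of $\pi$ is automatic and needs no hypothesis on $f$ (since $\Delta_{X/S}=\iota\circ\Delta_{X/Y}$ with $\Delta_{X/S}$ unramified, $\Delta_{X/Y}$ and hence its base change $\pi$ are always unramified), so the entire content of the lemma is the \emph{flatness} of $\pi$. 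Flatness is not a fibrewise notion: identifying the fibre of $\pi$ over one point of $V\times_Y V$ with its fibre over another point proves nothing about it. That flatness is genuinely at stake is visible from $X=[\A^1/(\Z/2)]$ over a field of characteristic $\neq 2$: its absolute diagonal is unramified but not \'etale, because the inertia degree jumps at the origin.

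The translation idea does work, but only in its family form, and it requires a descent input absent from your sketch. View $R=V\times_X V$ and $R'=V\times_Y V$ as groupoids on $V$, with source and target maps $s,t$ of $R$ \'etale because $v$ is, and $\pi$ compatible with all structure maps. The composition isomorphisms $(\xi,\eta)\mapsto(\xi\eta,\eta)$ and $(\xi',\eta)\mapsto(\xi'\pi(\eta),\eta)$ on composable pairs fit into a commutative square whose vertical maps are the base changes of $\pi$ along the first projections $R'\times_{s',V,t}R\to R'$ and $R'\times_{s',V,s}R\to R'$ respectively, and whose horizontal maps are isomorphisms. The first base change is \'etale at every point $(e(t(\xi_0)),\xi_0)$, since the first coordinate lies in the open unit section where $\pi$ is an open immersion; conjugating through the square, the second base change is \'etale at $(\xi_0,\xi_0)$. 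Finally --- and this is the step with no fibrewise substitute --- the projection $R'\times_{s',V,s}R\to R'$ is a base change of the \'etale map $s$, hence flat, and \'etaleness descends pointwise along flat base change: flatness of $\sO_{R',\pi(\xi_0)}\to\sO_{R,\xi_0}$ descends along the faithfully flat local homomorphism into the local ring of the fibre product, and vanishing of relative differentials descends likewise. This gives $\pi$ \'etale at the arbitrary point $\xi_0$ and completes your proof; without it, the argument stops exactly where you stopped.
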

\begin{proof}
{\sl Cf.} \cite[Appendix~B.1, B.2]{DR1}.
\end{proof}
\begin{prop}\label{prop:CB}
Let $S$ be a noetherian affine scheme. The Nisnevich $cd$-structure on
${\sD}{\sM}_S$ is complete and regular. The same holds for the representable
Nisnevich $cd$-structure on ${\sD}{\sM}_S$. 
\end{prop}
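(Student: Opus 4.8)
The plan is to verify directly the two axioms of Voevodsky's theory \cite{Voev1}: completeness and regularity of the Nisnevich $cd$-structure of Definition~\ref{defn:Nis-cd}. Since ${\sD}{\sM}_S$ is closed under fiber products, \emph{completeness} will reduce to the statement that distinguished squares are stable under arbitrary base change, and \emph{regularity} will reduce to an explicit analysis of the ``diagonal square'' attached to a distinguished square. Throughout I would mirror Voevodsky's treatment of the scheme case \cite{Voev2}; the one genuinely new ingredient is that for a non-representable \'etale map the diagonal is only \'etale (and not an open immersion), which is precisely the content of Lemma~\ref{lem:Unramified} and is exactly what makes the stacky argument go through.

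For completeness, I would start with a distinguished square with open immersion $e : A \inj X$ of complement $Z$, \'etale map $p : Y \to X$, and $Z {\times}_X Y \xrightarrow{\cong} Z$, together with an arbitrary morphism $X' \to X$. Forming the pullback square with $A' = A {\times}_X X'$, $Y' = Y {\times}_X X'$ and $B' = B {\times}_X X'$, I would note that $A' \inj X'$ is again an open immersion and $Y' \to X'$ is again \'etale, since both classes are stable under base change. Writing $Z'$ for the reduced complement of $A'$ in $X'$, the identities $Z' {\times}_{X'} Y' \cong Z' {\times}_X Y \cong Z' {\times}_Z (Z {\times}_X Y) \cong Z'$ show that the base-changed complement condition persists, so the pullback square is distinguished. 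By Voevodsky's criterion \cite{Voev1} that base-change stability implies completeness, this gives the claim, and the same argument applies verbatim to the representable $cd$-structure because base change also preserves representability.

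For regularity, the requirements that the square be Cartesian and that $e$ be a monomorphism are immediate from Definition~\ref{defn:Nis-cd}, open immersions being monomorphisms. The substance is the diagonal square $Q_\Delta$ with top row $B \xrightarrow{b_1} Y$, bottom row $B {\times}_A B \to Y {\times}_X Y$, and vertical diagonals $\Delta_{B/A}$ and $\Delta_{Y/X}$. I would check that the bottom map $B {\times}_A B \to Y {\times}_X Y$ is an open immersion, being the base change of $e$ along $Y {\times}_X Y \to X$; that the right-hand map $\Delta_{Y/X}$ is \'etale by Lemma~\ref{lem:Unramified} (as $p$ \'etale is unramified); and that $Q_\Delta$ is Cartesian, since a point of the fibre product is pinned down by its images in $A$ and $Y$ and is thereby forced onto the diagonal. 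It then remains to verify the complement condition: the complement of $B {\times}_A B$ in $Y {\times}_X Y$ is $(Y {\times}_X Y){\times}_X Z \cong (Y {\times}_X Z) {\times}_Z (Y {\times}_X Z)$, which by $Y {\times}_X Z \cong Z$ equals $Z$ and sits entirely on the diagonal, so pulling $\Delta_{Y/X}$ back to it is an isomorphism. This exhibits $Q_\Delta$ as distinguished and yields regularity; for the representable $cd$-structure one adds that when $p$ is representable $\Delta_{Y/X}$ is even an open immersion (again Lemma~\ref{lem:Unramified}), hence representable, so $Q_\Delta$ is representable-distinguished.

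The main obstacle is this final diagonal-square step. In the scheme case $\Delta_{Y/X}$ is an open immersion, which trivializes the bookkeeping; here it is only \'etale, so one must orient $Q_\Delta$ with the diagonal playing the role of the \'etale morphism $p$ (rather than the open immersion $e$) and then establish the complement condition honestly. The clean way to see that condition is the geometric fact that $p$ is an isomorphism over $Z$, which forces the whole fibre $(Y {\times}_X Y){\times}_X Z$ to collapse onto the diagonal; keeping track of this collapse with the correct reduced closed-substack structures is the delicate point of the whole proof.
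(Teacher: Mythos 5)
Your proposal is correct and follows essentially the same route as the paper: completeness via stability of distinguished squares under base change (Voevodsky's criterion), and regularity via his monomorphism-plus-diagonal-square criterion, with the diagonal $\Delta_{Y/X}$ playing the role of the \'etale map, Lemma~\ref{lem:Unramified} supplying its \'etaleness, and the same collapse of the complement of $B\times_A B$ onto the diagonal. The only cosmetic difference is that for the representable case you invoke Lemma~\ref{lem:Unramified} to get that $\Delta_{Y/X}$ is an open immersion, whereas the paper cites representability of diagonals from \cite{LMB}; both suffice.
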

\begin{proof}
Since it is clear from the definition of a Nisnevich distinguished square
that the pull-back of any distinguished square of the form 
~\eqref{eqn:Dist-square} is also a distinguished square, it follows from
\cite[Lemma~2.5]{Voev1} that the Nisnevich $cd$-structure on 
${\sD}{\sM}_S$ is complete.

We now show that the Nisnevich $cd$-structure on ${\sD}{\sM}_S$ is regular.
Let us consider a Nisnevich distinguished square of the form 
~\eqref{eqn:Dist-square} based on a stack $X$. By \cite[Lemma~2.11]{Voev1},
it suffices to show that this is a pull-back square, $e$ is a monomorphism and
the square 
\begin{equation}\label{eqn:CB1}
\xymatrix@C.8pc{
B \ar[r]^{e'} \ar[d]_{\Delta_B} & Y \ar[d]^{\Delta_Y} \\
B \times_A B \ar[r] & Y \times_X Y}
\end{equation}
is a Nisnevich distinguished square.

The first and the second assertions are obvious from the definitions
since a Nisnevich distinguished square is a pull-back square and an
open immersion of stacks is a monomorphism. So we only need to show
~\eqref{eqn:CB1}.

Note first that as $e$ is an open immersion, so is $e'$ and hence 
~\eqref{eqn:CB1} is a pull-back square. For the same
reason, $B \times_A B \to Y \times_X Y$ is an open immersion.
Let $Z$ be the complement of $A$ in $X$ and let $W = Z \times_X Y$
with reduced structures. Then $W$ is the complement of $B$ in $Y$ 
and we have $W \xrightarrow{\cong} Z$ via $p$.
Let $Z'$ be the complement of $B \times_A B$ in $Y \times_X Y$.
Then we have $Z' = \left(Y \times_X W\right) \bigcup
\left(W \times_X Y\right)$. Since the map $W \xrightarrow{p} Z$ is an
isomorphism, we see that $Z' \cong W  \times_X W \cong W \times_Z W$.
Since $W = {\Delta_Y}^{-1}(W \times_Z W) \to W \times_Z W$ is an     
isomorphism, we see that $\Delta^{-1}_Y (Z') \to Z'$ is an isomorphism.
Furthermore, as $p$ is \'etale, it follows from Lemma~\ref{lem:Unramified}
that it is unramified and hence $\Delta_Y$ is \'etale. This shows that
~\eqref{eqn:CB1} is a Nisnevich distinguished square.
The same proof also works for the completeness and boundedness of the
representable Nisnevich $cd$-structure. We only need to observe that
the diagonal maps in ~\eqref{eqn:CB1} are representable. But this
follows from \cite[Lemme~7.7]{LMB}. 
\end{proof}
\subsection{Boundedness of the Nisnevich $cd$-structures}
\label{subsection:bounded}
To show that the Nisnevich $cd$-structure on ${\sD}{\sM}_S$ is bounded,
we first recall the topological space associated to a stack from
\cite[Chapter~5]{LMB}.
Let $X$ be a stack ${\sD}{\sM}_S$. Let $|X|$ denote the set of the
equivalence classes of points $(x, K)$ on $X$. One defines the
{\sl Zariski} topology on $|X|$ by declaring a subset $U' \subset |X|$ to
be open if and only if it is of the form $|U|$, where $U$ is an open
substack of $X$. In particular, there is one-to-one correspondence between
the open (resp. reduced and closed) substacks of $X$ and open (resp. closed) 
subsets of $|X|$. Moreover, this correspondence preserves the inclusion
(and strict inclusion) relation between open (resp. reduced and closed) 
substacks of $X$ ({\sl cf.} \cite[Corollaire~5.6.3]{LMB}). It also follows 
from [{\sl loc. cit.}, Corollaire~5.7.2] that $|X|$ is a finite union of
its uniquely defined irreducible components and every irreducible closed 
subset 
$T \subset |X|$ has a unique generic point $t$ such that $T = \ov{\{t\}}$
is of the form $|Z|$ where $Z \inj X$ is an irreducible and reduced closed
substack with generic point $t$. It is also easy to check that $|X|$ is a 
noetherian topological space of dimension equal to the dimension of the 
stack $X$. 

\begin{defn}\label{defn:density}
We define a {\sl density structure} on ${\sD}{\sM}_S$ by assigning to any 
$X \in {\sD}{\sM}_S$, a sequence $D_0(X), D_1(X), \cdots$ of 
families of morphisms in ${\sD}{\sM}_S$ as follows. \\
For $i \ge 0$, let $D_i(X)$ be the family of inclusion of open substacks
$U \inj X$ such that for every irreducible and reduced closed 
substack $Z \inj (X - U)$, there is a chain of inclusions of irreducible and
reduced closed substacks $Z = Z_0 \subsetneq Z_1 \subsetneq \cdots \subsetneq
Z_i$ in $X$. A density structure such as above will be often denoted by
$D_*(-)$. Note here that all the maps in the definition of $D_*(-)$ are
closed and open immersions and hence representable.
\end{defn}
It is easy to check that the above indeed defines a density structure on
the category ${\sD}{\sM}_S$ in the sense of \cite[Definition~2.20]{Voev1}.
We define a density structure on the category ${\sN}{\sT}op$ of noetherian
topological spaces to be the standard one 
({\sl cf.} \cite[Example~2.23]{Voev1}). Thus, $D_i(X)$ is the family of
open subsets $U \subset X$ such that for every point $x \in (X-U)$,
there is sequence of points $x = x_0, x_1, \cdots , x_i$ in $X$ such that
$x_j \neq x_{j+1}$ and $x_j \in \ov{\{x_{j+1}\}}$ for each $j$. Such a 
sequence is called an increasing sequence of length $i$. 
For $(U \inj X) \in D_i(X)$, we shall often write $U \in D_i(X)$.
The following lemma is now elementary whose proof follows easily from the 
above, noting that the correspondence between the reduced and irreducible
closed substacks of $X$ and the irreducible closed subsets of $|X|$ 
preserves the strict inclusion. We leave the detail as an easy exercise.
\begin{lem}\label{lem:density1}
Let $X$ be a stack and let $U \inj X$ be an open substack. Then
$U \in D_i(X)$ if and only if $|U| \in D_i(|X|)$.
\end{lem}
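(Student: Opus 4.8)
The plan is to reduce the statement entirely to the order-preserving dictionary between closed substacks and closed subsets of $|X|$ that was recalled just before the definition of the density structure. Concretely, the assignment $Z \mapsto |Z|$ is a bijection from the irreducible reduced closed substacks of $X$ onto the irreducible closed subsets of $|X|$; it preserves inclusion and strict inclusion (by \cite[Corollaire~5.6.3]{LMB}), and each such subset is the closure $\ov{\{z\}}$ of its unique generic point $z$ (by \cite[Corollaire~5.7.2]{LMB}). Since the definitions of $D_i(X)$ and of $D_i(|X|)$ are phrased, respectively, in terms of chains of irreducible reduced closed substacks and in terms of specialization (increasing) sequences of points, the whole content is to check that this dictionary carries one notion to the other verbatim.

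First I would match the two quantifiers. Because $X-U$ carries its reduced closed substack structure and $|X-U| = |X|\setminus|U|$ under the correspondence between open substacks and open subsets, the irreducible reduced closed substacks $Z \inj (X-U)$ are in bijection with the points $x \in |X|\setminus|U|$: to $x$ one assigns the reduced closed substack supported on $\ov{\{x\}}$, and to $Z$ one assigns its generic point. Thus the phrase ``for every irreducible reduced closed substack $Z \inj (X-U)$'' and the phrase ``for every point $x \in X-U$'' range over the same index set. Next I would translate the chain condition itself: given a chain $Z = Z_0 \subsetneq Z_1 \subsetneq \cdots \subsetneq Z_i$ of irreducible reduced closed substacks of $X$, letting $x_j$ be the generic point of $Z_j$ so that $|Z_j| = \ov{\{x_j\}}$, the dictionary turns $Z_j \subsetneq Z_{j+1}$ into $\ov{\{x_j\}} \subsetneq \ov{\{x_{j+1}\}}$, that is into $x_j \in \ov{\{x_{j+1}\}}$ together with $x_j \neq x_{j+1}$. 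This is exactly the requirement that $x = x_0, x_1, \ldots, x_i$ be an increasing sequence of length $i$, and the passage $x_j \mapsto \ov{\{x_j\}}$ is visibly inverse. Assembling the two matches, ``a chain of length $i$ exists starting at every $Z \inj (X-U)$'' becomes ``an increasing sequence of length $i$ exists starting at every $x \in |X|\setminus|U|$'', which is precisely $U \in D_i(X) \iff |U| \in D_i(|X|)$.

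I do not expect a genuine obstacle here, which is why the author flags the lemma as an easy exercise: all the geometric input is already packaged in the cited LMB correspondence, and the passage to generic points makes the translation between strict inclusions of substacks and specializations of points automatic. The one point deserving a word of care, and the only place where a careless argument could slip, is to confirm that \emph{both} definitions quantify over chains living in the ambient $X$ (not in $X-U$) while anchoring the bottom of the chain in the complement; once this is observed, the generic-point dictionary finishes the proof, and I would keep the write-up to essentially the three sentences above.
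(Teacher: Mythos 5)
Your proof is correct and follows exactly the route the paper intends: the paper leaves this lemma as an exercise, with the hint that it follows from the inclusion-preserving correspondence (via \cite[Corollaire~5.6.3, 5.7.2]{LMB}) between irreducible reduced closed substacks of $X$ and irreducible closed subsets of $|X|$, which is precisely the generic-point dictionary you spell out. Your added care about where the quantifiers and chains live (bottom anchored in the complement, chain in the ambient $X$) is the right point to check and is handled correctly.
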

$\hfill \square$
\begin{lem}\label{lem:fiber-dim}
Let $f : X \to Y$ be an \'etale morphism of Deligne-Mumford stacks.
Then the fibers of $f$ and $\ov{f} : |X| \to |Y|$ are zero-dimensional.
\end{lem}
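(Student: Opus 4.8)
The plan is to reduce both assertions to the classical fact that an \'etale morphism of noetherian schemes has zero-dimensional (indeed finite, discrete) fibres, combined with the two facts recorded above: that the dimension of a stack may be computed on any atlas (Definition~\ref{defn:morphism1}), and that the dimension of the topological space $|X|$ equals the dimension of the stack $X$. Throughout I would work with the defining diagram of an \'etale morphism, fixing atlases $V\xrightarrow{v} X$ and $U\xrightarrow{u} Y$ and an \'etale map $g\colon V\to U$ of schemes with $f\circ v = u\circ g$, so that $V\to Y$ factors as $V\xrightarrow{g} U\xrightarrow{u} Y$.

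First I would treat the fibre as a stack. Fix a point $y$ of $Y$; let $\eta_y$ be its residual gerbe and choose a representative $\Spec(K)\to \eta_y\to Y$, where $\Spec(K)\to\eta_y$ is the atlas supplied by the gerbe structure~\eqref{eqn:point}. Base-changing the atlas diagram along $\Spec(K)\to Y$ gives schemes: $U_K:=U\times_Y\Spec(K)$ is an algebraic space \'etale over $\Spec(K)$, hence a scheme which is a disjoint union of spectra of finite separable extensions of $K$, so $\dim U_K=0$; and $V_K:=V\times_Y\Spec(K)=V\times_U U_K$ is the base change of $g$, hence \'etale over the zero-dimensional scheme $U_K$, so $\dim V_K=0$. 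Since $V_K\to X\times_Y\Spec(K)$ is the base change of the \'etale surjection $v$, it is an \'etale atlas, and Definition~\ref{defn:morphism1} yields $\dim\bigl(X\times_Y\Spec(K)\bigr)=0$. The very same scheme $V_K$ is an \'etale atlas of $X\times_Y\eta_y$ (composing $V_K\to X\times_Y\Spec(K)$ with the base change of the atlas $\Spec(K)\to\eta_y$), so $\dim\bigl(X\times_Y\eta_y\bigr)=0$ as well. This gives the assertion for the fibres of $f$ viewed as stacks.

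For the topological statement I would use that the set-theoretic fibre $\bar f^{-1}(y)\subset|X|$ is, by the description of points of a stack in \cite[Chapter~5]{LMB} (compare Lemma~\ref{lem:gerbe}), canonically the underlying space $|X\times_Y\eta_y|$ of the fibre stack over the residual gerbe computed above. Combining this identification with the equality $\dim|X\times_Y\eta_y|=\dim\bigl(X\times_Y\eta_y\bigr)$ recalled above and the vanishing just established, one obtains $\dim\bar f^{-1}(y)=0$, which is exactly the claim that the fibres of $\bar f$ are zero-dimensional.

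The bookkeeping that base changes of atlases remain atlases, and that \'etale morphisms of schemes have zero-dimensional fibres, is routine. The one step carrying genuine content, which I expect to be the main obstacle, is the identification of the subspace $\bar f^{-1}(y)\subset|X|$ with $|X\times_Y\eta_y|$: one must take the fibre over the residual gerbe $\eta_y$ rather than over an arbitrary representative $\Spec(K)\to Y$, and check that with this choice the natural surjection $|X\times_Y\eta_y|\twoheadrightarrow\bar f^{-1}(y)$ is a homeomorphism, so that no point is spuriously split and no spurious dimension is introduced. Everything else reduces directly to the scheme case.
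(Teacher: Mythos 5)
Your treatment of the fibres of $f$ as stacks is correct and essentially coincides with the paper's: the paper forms $W={\rm Spec}(K)\times_Y X$, takes an atlas $U\to W$, and observes that $U\to{\rm Spec}(K)$ is an \'etale map of schemes, hence $U$ and therefore $W$ are zero-dimensional; your base change of the atlas diagram ($U_K$, $V_K$) is the same computation in slightly different packaging, and invoking Definition~\ref{defn:morphism1} is exactly what the paper does.

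The problem is the topological half, and it sits precisely in the step you defer. In the situation at hand, $|X\times_Y\eta_y|$ is a \emph{discrete} (indeed finite) space: its atlas $V_K$ is a disjoint union of spectra of fields, and the atlas map is open and surjective, so $|X\times_Y\eta_y|$ carries the quotient of a discrete topology. Consequently, a continuous bijection $|X\times_Y\eta_y|\to T=\ov{f}^{-1}(y)$ is a homeomorphism \emph{if and only if} $T$ itself is discrete, i.e.\ if and only if there are no specializations between distinct points of the fibre --- which, for the finite sober space $T$, is exactly the assertion ``$\dim T=0$'' that the lemma claims. So the ``check'' you postpone is not bookkeeping that reduces to the scheme case; it is the entire content of the second statement, and as written your argument is circular (or empty) at that point. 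To fill it you would need an input you never invoke: for instance, Lemma~\ref{lem:gerbe} identifies $\eta_y$ with $Y\times_{\un{Y}}\un{y}$ when a coarse moduli space exists, the map $|Y|\to|\un{Y}|$ is a homeomorphism, hence $X\times_Y\eta_y\cong X\times_{\un{Y}}{\rm Spec}\,\kappa(\un{y})$ and the identification reduces to the scheme-style fact (a noetherian limit argument over the closure of $\un{y}$) that fibres over residue fields compute set-theoretic fibres; alternatively one shows directly that specializations in $T$ lift to $|X\times_Y\eta_y|$. For comparison, the paper's sketch sidesteps injectivity entirely: it uses the surjection $|W|\twoheadrightarrow T$ from \cite{LMB} and concludes $\dim T=0$ from $\dim|W|=0$. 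That step is also not purely formal (a two-point discrete space surjects continuously onto the Sierpi\'nski space, which has dimension one; one again needs specializations in $T$ to lift), so the paper too is only sketching here --- but your version leans its full weight on an identification that is logically equivalent to the conclusion, which is a genuine gap rather than a deferred routine verification.
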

\begin{proof}
This is also elementary and known to experts. We only give a sketch.
Let ${\rm Spec}(K) \xrightarrow{y} Y$ be a point and let $W =
{\rm Spec}(K) \times_Y X$. Then $\wt{f} : W \to {\rm Spec}(K)$ is also
\'etale. Let $U \xrightarrow{u} W$ be an \'etale atlas. Then the composite
$g \circ u :U \to {\rm Spec}(K)$ is an \'etale map of schemes and hence
$U$ is zero-dimensional and so is $W$. 

Now, let $y$ be the point of $|Y|$ defined by the point 
${\rm Spec}(K) \xrightarrow{y} Y$ above and let $T = (\ov{f})^{-1}(y)$.
Then there are natural maps $|W| \to T \to \{y\}$ such that the first map
is surjective by \cite[Proposition~5.4]{LMB}. Since we have shown that
$|W|$ is zero-dimensional, we see that $T$ must also be so.
\end{proof}
\begin{lem}\label{lem:density2}
Let $f : X \to Y$ be an \'etale and surjective morphism of stacks in 
${\sD}{\sM}_S$ and let $U \in D_i(X)$. Then $U$ defines an unique open
substack $V \inj Y$ which is in $D_i(Y)$.
\end{lem}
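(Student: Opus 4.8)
The plan is to take $V$ to be the open substack of $Y$ cut out by the image of $U$. First I would note that since $f$ is \'etale it is an open morphism on underlying topological spaces, so $\ov{f}(|U|)$ is an open subset of $|Y|$. By the bijective correspondence between open substacks of $Y$ and open subsets of $|Y|$ (\cite[Corollaire~5.6.3]{LMB}), this open subset is the space of a unique open substack $V \inj Y$, and this is the object asserted by the lemma. The same correspondence disposes of the uniqueness clause, since $V$ is forced by the equality $|V| = \ov{f}(|U|)$.

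It then remains to verify that $V \in D_i(Y)$, and by \lemref{lem:density1} this is equivalent to $|V| \in D_i(|Y|)$; I would therefore argue entirely on the noetherian spaces $|X|$, $|Y|$ via the map $\ov{f}$. Fix a point $y \in |Y| \setminus |V| = |Y| \setminus \ov{f}(|U|)$. Since $f$ is surjective, $\ov{f}$ is surjective (\cite[Proposition~5.4]{LMB}), so $y$ admits a preimage $x \in |X|$; because $y \notin \ov{f}(|U|)$, no point of $\ov{f}^{-1}(y)$ can lie in $|U|$, so in particular $x \in |X| \setminus |U|$. As $U \in D_i(X)$, applied to $Z = \ov{\{x\}}$, there is a chain of strict inclusions $\ov{\{x\}} \subsetneq \ov{\{x_1\}} \subsetneq \cdots \subsetneq \ov{\{x_i\}}$ of length $i$ in $|X|$. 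Applying $\ov{f}$ and using its continuity yields the weak inclusions $\ov{\{\ov{f}(x_j)\}} \subseteq \ov{\{\ov{f}(x_{j+1})\}}$, i.e. an a priori non-strict chain of length $i$ in $|Y|$ whose smallest member is $\ov{\{y\}}$.

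The only substantive point is to see that this image chain stays strictly increasing, and this is exactly where \lemref{lem:fiber-dim} enters. If two consecutive images coincided, say $\ov{f}(x_j) = \ov{f}(x_{j+1}) =: y'$, then $x_j$ and $x_{j+1}$ would both lie in the fibre $\ov{f}^{-1}(y')$ with $x_j \in \ov{\{x_{j+1}\}}$ and $x_j \neq x_{j+1}$; passing to the subspace topology of the fibre would then produce a nontrivial specialization, forcing the fibre to have dimension at least one and contradicting the zero-dimensionality of the fibres of $\ov{f}$ established in \lemref{lem:fiber-dim}. Hence the images are pairwise distinct, giving a strict chain of length $i$ in $|Y|$ ending at $\ov{\{y\}}$. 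Since $y$ was arbitrary in the complement of $|V|$, this proves $|V| \in D_i(|Y|)$ and therefore $V \in D_i(Y)$. The main obstacle is thus precisely the preservation of strictness under $\ov{f}$, which the zero-dimensional fibre lemma is designed to supply; the openness and surjectivity of $\ov{f}$ are the only other inputs, and both are routine consequences of $f$ being \'etale and surjective.
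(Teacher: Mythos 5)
Your proposal is correct and takes essentially the same route as the paper's own proof: you define $V$ as the open image of $U$ (open since \'etale maps are open), reduce to the underlying noetherian spaces via \lemref{lem:density1}, lift a point of $|Y| \setminus |V|$ to $|X| \setminus |U|$ by surjectivity, push the resulting chain forward by continuity, and use the zero-dimensionality of the fibres of $\ov{f}$ from \lemref{lem:fiber-dim} to rule out a collapse of consecutive terms. The only cosmetic difference is that you phrase the density condition via strict chains of closures while the paper uses increasing sequences of points, which are equivalent formulations since $|X|$ and $|Y|$ are sober.
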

\begin{proof}
Since $f$ is \'etale, it is an open morphism. In particular, the image
of $U$ in $Y$ defines a unique open substack $V \inj Y$. We have to show
that $V \in D_i(Y)$. By Lemma~\ref{lem:density1}, it suffices to show that
$|V| \in D_i(|Y|)$. 

Let $y_0 \in (|Y| - |V|)$. Since $f$ is surjective, the map $\ov{f} : |X| \to
|Y|$ is clearly surjective. In particular, there is a point $x_0 \in |X|$
with $\ov{f}(x_0) = y_0$. As $x_0 \notin |U|$ and since $|U| \in D_i(|X|)$
by Lemma~\ref{lem:density1}, there is an increasing sequence
$x_0, x_1, \cdots , x_i$ of length $i$ in $|X|$. Put $y_j = \ov{f}(x_j)$.
We claim that $y_0, y_1, \cdots , y_i$ is an increasing sequence of length
$i$ in $|Y|$. 

To show that $y_j \in \ov{\{y_{j+1}\}}$, let $V'$ be an open subset of $|Y|$
containing $y_j$ and let $U' = (\ov{f})^{-1}(V')$. Then $U'$ is an open subset
of $|X|$ containing $x_j$ and hence also contains $x_{j+1}$, which in turn
implies that $y_{j+1} \in V'$, that is, $y_j \in \ov{\{y_{j+1}\}}$.
Finally, suppose $y_j = y_{j+1} = y$, say, and put $T = (\ov{f})^{-1}(y)$.
Let $T_j$ be the closure of $\{x_j\}$ in $T$. Then, the fact that $x_j \in
\ov{\{x_{j+1}\}}$ implies that $W_j \subset W_{j+1}$. Moreover, since
$W_j$ is the closure of a point in $T$, it is easy to see that it is
irreducible, that is, we get a chain $W_j \subset W_{j+1}$ of irreducible
closed subsets in $T$ which means that the dimension of $T$ is at least
one. But this is a contradiction from Lemma~\ref{lem:fiber-dim}, since
$f$ is \'etale. This completes the proof of the lemma.
\end{proof}   
\begin{lem}\label{lem:square*}
Let 
\begin{equation}\label{eqn:square1}
\xymatrix@C.8pc{
B \ar[r]^{e_Y} \ar[d]_{q} & Y \ar[d]^{p} \\
A \ar[r]_{e} & X}
\end{equation}
be a commutative square in ${\sD}{\sM}_S$ such that $e$ is an open immersion. 
Then this square is Cartesian if and only if the
square of the associated topological spaces 
\begin{equation}\label{eqn:square2}
\xymatrix@C.8pc{
|B| \ar[r]^{\ov{e_Y}} \ar[d]_{\ov{q}} & |Y| \ar[d]^{\ov{p}} \\
|A| \ar[r]_{\ov{e}} & |X|}
\end{equation}
is Cartesian.
\end{lem}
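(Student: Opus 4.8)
The plan is to reduce both implications to a single comparison: since $e$ is an open immersion, its base change $\pi_Y\colon P := A\times_X Y \to Y$ is again an open immersion, and I will identify the open substack $P$ of $Y$ with the open subset $\ov{p}^{-1}(|A|)$ of $|Y|$. For this, I use the inclusion-preserving bijection between open substacks of a stack and open subsets of its underlying space recalled at the start of Section~\ref{subsection:bounded}: a point ${\rm Spec}(K)\to Y$ factors through $P=p^{-1}(A)$ if and only if its image under $p$ factors through the open substack $A$ (open immersions being monomorphisms), so that $|P| = \ov{p}^{-1}(|A|)$ as subsets of $|Y|$. Moreover, because an open immersion of stacks induces an open embedding of underlying spaces, $\ov{e}\colon |A|\to |X|$ is an open embedding, and hence projection identifies the topological fibre product $|A|\times_{|X|}|Y|$ with the open subset $\ov{p}^{-1}(|A|)\subseteq |Y|$. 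Combining the two identifications gives a canonical homeomorphism $|P|\cong |A|\times_{|X|}|Y|$ compatible with all structure maps.

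The forward implication is then routine. If \eqref{eqn:square1} is Cartesian, the canonical map $B\to P$ is an isomorphism; applying $|{-}|$ and the homeomorphism $|P|\cong |A|\times_{|X|}|Y|$ above shows that $|B|\to |A|\times_{|X|}|Y|$ is a homeomorphism, which is exactly the Cartesianness of \eqref{eqn:square2}.

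For the converse, assume \eqref{eqn:square2} is Cartesian. Commutativity of \eqref{eqn:square1} furnishes a canonical comparison morphism $\phi\colon B\to P$ with $\pi_Y\circ\phi = e_Y$. Applying the forward implication to the tautologically Cartesian square obtained by replacing $B$ with $P$ gives $|P|\cong |A|\times_{|X|}|Y|$; comparing with the hypothesis shows that $|\phi|\colon |B|\to |P|$ is a homeomorphism. It remains to upgrade this to an isomorphism of stacks. The mechanism is that $\phi$ sits over the open immersion $\pi_Y\colon P\hookrightarrow Y$ via $e_Y=\pi_Y\circ\phi$: once $e_Y$ is known to be an open immersion, $\phi$ is itself an open immersion, and an open immersion of stacks inducing a homeomorphism on underlying spaces must be an isomorphism, since by the bijection of Section~\ref{subsection:bounded} two open substacks of $Y$ with the same underlying open subset of $|Y|$ coincide. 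Hence $\phi$ is an isomorphism and \eqref{eqn:square1} is Cartesian.

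The main obstacle is precisely this last step of the converse. The functor $X\mapsto |X|$ forgets the stacky (gerbe) structure, so a homeomorphism $|\phi|$ does not by itself force $\phi$ to be an isomorphism; what makes the argument go through is the \emph{open-immersion} factorization $e_Y=\pi_Y\circ\phi$, which reduces the stack-level claim to the purely topological comparison of the two open substacks of $Y$. I would therefore isolate the verification that the top horizontal map $e_Y$ is an open immersion as the technical heart of the proof, so that $\phi$ inherits this property; in the setting in which this lemma is applied, where the horizontal maps of the square are pulled back from open immersions, this holds and the topological criterion becomes decisive.
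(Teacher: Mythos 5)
Your forward implication is correct but takes a mildly different route from the paper's. The paper cites \cite[Proposition~5.4]{LMB} to get surjectivity of the canonical map $b\colon |B| \to |Y|\times_{|X|}|A|$, then observes that $\ov{e_Y}$ and the projection $|Y|\times_{|X|}|A| \to |Y|$ are open embeddings (using that $e_Y$ is a pullback of the open immersion $e$), so that $b$, being a surjective map whose composite with an embedding is an embedding, is a homeomorphism. You instead identify $|p^{-1}(A)|$ with $\ov{p}^{-1}(|A|)$ directly from the definition of points and the bijection between open substacks and open subsets, and then transport along $B \cong p^{-1}(A)$; this avoids the citation at the cost of an easy pointwise verification, and both arguments are sound.

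The real divergence is in the converse, and there you have put your finger on something the paper conceals. The paper's entire treatment of the ``if'' direction is the sentence that it ``follows easily from this in the same way,'' but that direction is false as literally stated: take $A = X = Y = \Spec(k)$ with identity maps between them, and $B = [\Spec(k)/G]$ for a nontrivial finite group $G$, with its structure maps to $A$ and $Y$. Then $e$ is an open immersion, the square commutes, and all four topological spaces are single points, so \eqref{eqn:square2} is Cartesian; yet $A\times_X Y = \Spec(k) \not\cong B$, so \eqref{eqn:square1} is not. As you say, $X \mapsto |X|$ cannot see the gerbe structure of $B$. Your extra hypothesis that $e_Y$ be an open immersion --- so that the comparison map $\phi\colon B \to p^{-1}(A)$ is a morphism of open substacks of $Y$ over $Y$, hence an isomorphism once $|\phi|$ is a homeomorphism --- is therefore not a blemish in your write-up but a necessary repair of the statement, and it is exactly the hypothesis available where the converse is actually used: in the proof of Proposition~\ref{prop:bounded-main}, the top map $e'_Y$ of the square \eqref{eqn:BM5} is a restriction of the map $e_Y$ of the original distinguished square, which is an open immersion because that square is Cartesian. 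So your conditional converse is precisely the statement the paper needs, and your proof of it is correct; the paper's own argument establishes only the ``only if'' half and passes over the very point you isolate.
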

\begin{proof}
We only need to prove the `only if' part. The other one follows easily from
this in the same way. So suppose the square ~\eqref{eqn:square1} is Cartesian
and let $T = |Y| \times_{|X|} |A|$. This gives a natural map
$|B| \xrightarrow{b} T$ which factors ${\ov{e_Y}}$ and $\ov{q}$ in the
square ~\eqref{eqn:square2}. Moreover, the map $b$ is surjective by
\cite[Proposition~5.4]{LMB}. Since $e$ is an open immersion, we see that
the maps $e_Y, \ov{e}, {\ov{e_Y}}$ and $T \to |Y|$ are 
all open immersions. Thus we get the maps $|B| \to T \to |Y|$ such that
the second map and the composite map are both open immersions and the first
map is surjective. The first map must then be an isomorphism.
\end{proof} 
\begin{prop}\label{prop:bounded-main}
The Nisnevich and the representable Nisnevich $cd$-structures
on ${\sD}{\sM}_S$ are bounded with respect to
the density structure $D_*(-)$ defined above.
\end{prop}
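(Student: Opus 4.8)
The plan is to verify Voevodsky's boundedness criterion directly: by \cite[Definition~2.22]{Voev1} it suffices to show that every Nisnevich distinguished square is \emph{reducing} with respect to the density structure $D_*(-)$. So fix a distinguished square
\[
\xymatrix@C.8pc{
B \ar[r] \ar[d]_{q} & Y \ar[d]^{p} \\
A \ar[r]_{e} & X}
\]
based on $X$, with reduced complement $Z = X - A$, together with open substacks $A_0 \in D_{i-1}(A)$, $B_0 \in D_{i-1}(B)$ and $Y_0 \in D_{i-1}(Y)$; we must produce a Nisnevich distinguished square based on $X$ whose base-corner lies in $D_i(X)$ and which is compatible with this data. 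Following \cite{Voev2}, the idea is to carry out the construction on the associated noetherian space $|X|$ and then lift it back to stacks, the passage in both directions being governed by the bridge results proved above.

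First I would realize everything topologically. By \lemref{lem:square*} the square realizes to a Cartesian square of noetherian spaces in which $\ov{e}$ is an open immersion; since $p$ is \'etale it is open, so $\ov{p} : |Y| \to |X|$ is an open map with zero-dimensional fibers (\lemref{lem:fiber-dim}) which restricts to a homeomorphism over $|Z|$. By \lemref{lem:density1} the prescribed opens realize to $|A_0| \in D_{i-1}(|A|)$, $|B_0| \in D_{i-1}(|B|)$ and $|Y_0| \in D_{i-1}(|Y|)$, and $|X|$ is a noetherian space of the same dimension as $X$. On $|X|$ this is exactly the configuration handled in \cite{Voev2}, and reproducing that construction yields open subsets $A^{t} \subseteq |X|$ and $Y^{t} \subseteq |Y|$ with $A^{t} \in D_i(|X|)$, refining the realized data, together with $B^{t} = A^{t} \times_{|X|} Y^{t}$; here \lemref{lem:density2} is used to compare densities across the \'etale map.

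Next I would lift back. By the inclusion-preserving bijection between open substacks and open subsets of the associated space \cite[Corollaire~5.6.3]{LMB}, the sets $A^{t}$ and $Y^{t}$ determine open substacks $A' \inj X$ and $Y' \inj Y$, and I put $B' = A' \times_X Y'$. The resulting square is Cartesian, $A' \inj X$ is an open immersion, and $Y' \to X$ is \'etale as a restriction of $p$; by \lemref{lem:density1} the relation $A^{t} \in D_i(|X|)$ gives $A' \in D_i(X)$. It remains to see that the square is genuinely distinguished, i.e.\ that $Y' \to X$ is an isomorphism over the reduced complement of $A'$.

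This last verification is where I expect the real work to lie. The enlarged complement $X - A'$ properly contains $Z$ in general, so one must check that $Y^{t}$ was selected so as to keep exactly one sheet of $Y$ over the new closed part---precisely the feature guaranteed by Voevodsky's construction on $|X|$. On stacks the iso-over-complement condition concerns reduced closed substacks, hence is detected on closed subsets of $|X|$ via the substack correspondence and \lemref{lem:square*}; and since $Y' \to X$ is \'etale, it is an isomorphism over that closed substack exactly when it induces isomorphisms of residual gerbes there, by \lemref{lem:gerbeiso} together with the generic-isomorphism statement of \lemref{lem:section*}. Granting this compatibility, the square is Nisnevich distinguished and reducing. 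Finally, the representable case follows verbatim, since all morphisms produced---the open immersions, the diagonal, and the restriction of $p$---are representable, as noted in \defref{defn:density} and in the proof of \propref{prop:CB}.
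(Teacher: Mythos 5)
Your high-level strategy (transport the distinguished square to the noetherian space $|X|$ via \lemref{lem:density1}, \lemref{lem:fiber-dim} and \lemref{lem:square*}, run the argument of \cite{Voev2} there, and lift back through the correspondence between open substacks and open subsets) is indeed the paper's strategy, but your execution misreads what the reducing condition requires, and the misreading is fatal rather than cosmetic. The condition asks for a \emph{new} distinguished square $Q'$, with its own base $X'$, together with a morphism of squares $Q' \to Q$ such that $B' \to B$, $A' \to A$, $Y' \to Y$ factor through $B_0$, $A_0$, $Y_0$ and such that the \emph{base corner} satisfies $X' \in D_i(X)$. You instead build a square still based on $X$ itself, whose \emph{open} corner $A'$ lies in $D_i(X)$ (your final verification, ``$Y' \to X$ is an isomorphism over the reduced complement of $A'$,'' pins this down). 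Such a square need not exist: a morphism of squares forces $A' \subseteq A_0 \subseteq A$, so the reduced complement of $A'$ in $X$ contains $Z = X - A$; distinguished-ness then forces $Y'$ to contain $p^{-1}(Z)$ (the unique lift of $Z$), while the factorization condition forces $Y' \subseteq Y_0$. But an arbitrary $Y_0 \in D_i(Y)$ need not contain $p^{-1}(Z)$: already for the trivial square with $Y = X = \A^1$, $p = \id$, $Z = \{0\}$, the open set $Y_0 = Y - \{0\}$ lies in $D_1(Y)$, and then no $Y' \subseteq Y_0$ can be an isomorphism over any closed set containing $Z$. This is exactly why both the paper and \cite{Voev2} shrink the base. (A smaller slip of the same kind: $A_0$ and $Y_0$ are given in $D_i$, not $D_{i-1}$; only $B_0$ is in $D_{i-1}$.)

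Concretely, the construction you defer to ``Voevodsky's construction'' runs as follows and none of it survives in your write-up. First one reduces to $A_0 = A$, $Y_0 = Y$ by replacing $X$ with the open substack $X_0$, the image of the \'etale surjection $A_0 \coprod Y_0 \to X$, which lies in $D_i(X)$ by \lemref{lem:density2} --- this, not a vague ``comparison of densities,'' is where that lemma is used. Then one sets $B' = B_0$, $A' = A$, and deletes explicit closed sets: $Y' = Y - Z_Y$ and $X' = X - (Z \cap Z_X)$, where $Z_Y$, $Z_X$ are the reduced closed substacks with $|Z_Y| = \ov{\ov{e_Y}(|B - B_0|)}$ and $|Z_X| = \ov{\ov{p} \circ \ov{e_Y}(|B - B_0|)}$. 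Deleting $Z_Y$ is precisely what makes $A' \times_{X'} Y'$ equal to $B_0$; with your definition $B' = A' \times_X Y'$ there is no reason for $B' \to B$ to factor through $B_0$ at all. Finally, Cartesian-ness, the isomorphism over the reduced complement of $A'$ in $X'$, and $X' \in D_i(X)$ are checked on the associated topological spaces using \lemref{lem:square*} and \lemref{lem:density1}; the lemmas you invoke for this step, \lemref{lem:gerbeiso} and \lemref{lem:section*}, play no role in the boundedness proof --- they belong to the comparison of the Nisnevich topology with the $cd$-topology in Theorem~\ref{thm:Nis-cd-main}.
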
 
\begin{proof}
We show the boundedness of the Nisnevich $cd$-structure as the same
proof also works verbatim for the representable Nisnevich $cd$-structure.
Following the proof of \cite{Voev2} in the scheme case, we show that any
Nisnevich distinguished square of stacks is reducing with respect to our
density structure. So let 
\begin{equation}\label{eqn:BM1}
\xymatrix@C.8pc{
B \ar[r]^{e_Y} \ar[d] & Y \ar[d]^{p} \\
A \ar[r]_{e} & X}
\end{equation}
be a Nisnevich distinguished square. Let $B_0 \in D_{i-1}(B), \ A_0 \in
D_i(A)$ and $Y_0 \in D_i(Y)$ be given. Then we have to find another
Nisnevich distinguished square 
\begin{equation}\label{eqn:BM2}
\xymatrix@C.8pc{
B' \ar[r]^{e'_Y} \ar[d] & Y' \ar[d]^{p'} \\
A' \ar[r]_{e'} & X'}
\end{equation}
and a morphism of distinguished squares 
\begin{equation}\label{eqn:BM3}
\xymatrix@C.8pc{  
 & B' \ar[rr] \ar[dd] \ar[dl] & & Y' \ar[dd] \ar[dl]\\
B \ar[rr] \ar[dd] & & Y \ar[dd] & \\
& A' \ar[rr] \ar[dl] & & X' \ar[dl] \\
A \ar[rr] & & X &}
\end{equation}
such that $B' \to B$ factors through $B' \to B_0$, $Y' \to Y$ factors through
$Y' \to Y_0$, $A' \to A$ factors through $A' \to A_0$ and $X' \in D_i(X)$.

Since the map $(e \coprod p) : A \coprod Y \to X$ is \'etale and surjective,
and since $(A_0 \coprod Y_0) \in D_i(A \coprod Y)$, we see from 
Lemma~\ref{lem:density2} that $X_0 = (e \coprod p)(A_0 \coprod Y_0) \in
D_i(X)$. Since $D_i(X)$ is closed under finite intersections, by base changing 
to $X_0 \inj X$, we can assume that $A_0 = A$ and $Y_0 = Y$. 

We take $B' = B_0$ and $A' = A$ and let $Z = (X - A)$ and $T = B - B_0$.
Following the notations of 
Lemma~\ref{lem:square*}, we put $V' = 
|Y| - \left(\ov{\ov{e_Y}(|B-B_0|)}\right)$ and 
$W' = |X| - \left(|X - A| \bigcap \ov{(\ov{p} \circ \ov{e_Y})(|B-B_0|)}\right)$.
Let $Z_Y$ and $Z_X$ be the unique reduced closed substacks of $Y$ and $X$ 
respectively such that $|Z_Y| = \ov{\ov{e_Y}(|B-B_0|)}$ and
$|Z_X| = \ov{(\ov{p} \circ \ov{e_Y})(|B-B_0|)}$.  Then we have
$W' = |X - (Z \cap Z_X)|$ and $V' = |Y - Z_Y|$.
Put $X' = X - (Z \cap Z_X)$ and $Y' = Y - Z_Y$.
We then see that $A' = A \inj X'$ and the map $p$ then induces a map 
$Y' \to X'$, which we write as $p'$. Moreover, as $Y'$ is an open substack
of $Y$, the map $p'$ is also \'etale.

By Lemma~\ref{lem:square*}, the square ~\eqref{eqn:square2} is Cartesian,
$\ov{e}$ is an open immersion and the map $\ov{p} : (\ov{p})^{-1}(|Z|)
\to |Z|$ is an isomorphism. From this, one easily checks that the 
square
\begin{equation}\label{eqn:BM4}
\xymatrix@C.8pc{
|B'| \ar[r]^{\ov{e'_Y}} \ar[d]_{\ov{q}} & |Y'| \ar[d]^{\ov{p'}} \\
|A'| \ar[r]_{\ov{e}} & |X'|}
\end{equation}
is Cartesian, $\ov{e}$ is an open immersion and $(\ov{p})^{-1}(|X'-A'|)
\to |X'-A'|$ is an isomorphism and $|X'| \in D_i(|X|)$
({\sl cf.} \cite[Proposition~2.10]{Voev2}). 
Thus, we have a commutative diagram
\begin{equation}\label{eqn:BM5}
\xymatrix@C.8pc{
B' \ar[r]^{e'_Y} \ar[d] & Y' \ar[d]^{p'} \\
A' \ar[r]_{e'} & X',}
\end{equation}
where $e' = e$ and $e'_Y$ is the restriction of $e_Y$ on the open substack
$B'$ such that ~\eqref{eqn:BM4} is the diagram of the associated topological
spaces. Since this square is Cartesian and since $e'$ is an open immersion,
it follows from Lemma~\ref{lem:square*} that ~\eqref{eqn:BM5} is a Cartesian
square and the map $p'^{-1}(X'-A') \to (X'-A')$ is an isomorphism of the
associated reduced closed substacks. In particular, ~\eqref{eqn:BM5} is a 
Nisnevich distinguished square which clearly maps to ~\eqref{eqn:BM1}.
To complete the proof of the proposition, we only need to show that
$X' \in D_i(X)$. But this follows immediately from 
Lemma~\ref{lem:density1} since $|X'| \in D_i(|X|)$ as shown above.
\end{proof} 

\section{Nisnevich descent and other consequences}
\label{section:Consequences}
In this section, we prove the Nisnevich descent for the $K$-theory of
perfect complexes on stacks using the results of the previous sections
and we also draw some other important consequences for the cohomology
of sheaves on stacks.

Recall that $\left({\sD}{\sM}_S\right)_{Nis}$ is a Grothendieck site on the
category ${\sD}{\sM}_S$ which is noetherian. Let $*$ be a chosen final
object in the category of sets. For $X \in {\sD}{\sM}_S$,
let $\Z_X$ denote the sheaf on the restricted Grothendieck site
${{\sD}{\sM}_S}/X$ of all objects lying over $X$, which is the sheaf given
by the free abelian group on the presheaf which takes every object to
$*$. For a sheaf $\sF$ on
$\left({\sD}{\sM}_S\right)_{Nis}$, let $H^*_{Nis}(X, \sF)$ denote the 
cohomology groups $Ext^*_{Nis}(\Z_X, \sF)$.

\begin{cor}\label{cor:sheaf}
Let $\sF$ be a presheaf on $\left({\sD}{\sM}_S\right)_{Nis}$. Then $\sF$ is a
sheaf if and only if $\sF(\emptyset) = *$ and for every Nisnevich 
distinguished square of the form ~\eqref{eqn:Dist-square}, the square
\begin{equation}\label{eqn:sheaf1}
\xymatrix@C.8pc{
\sF(X) \ar[r] \ar[d] & \sF(A) \ar[d] \\
\sF(Y) \ar[r] & \sF(B)}
\end{equation}
is a pull-back.
\end{cor}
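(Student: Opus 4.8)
The plan is to deduce the statement from Voevodsky's general theory of $cd$-structures, since all the required structural input has already been assembled in the previous sections. First I would invoke \thmref{thm:Nis-cd-main} to replace the Nisnevich site $\left({\sD}{\sM}_S\right)_{Nis}$ by the topology $t_{Nis}$ generated by the Nisnevich $cd$-structure: a presheaf is a sheaf for one precisely when it is a sheaf for the other, so it suffices to characterize the $t_{Nis}$-sheaves. Next I would record that this $cd$-structure is complete by \propref{prop:CB}, and that its initial object is the empty scheme, as noted in the definition of the $cd$-structure.

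With completeness in hand, the corollary is exactly the specialization of \cite[Lemma~2.9]{Voev1}: for a complete $cd$-structure $P$ on a category with an initial object, a presheaf of sets $\sF$ is a sheaf in the topology $t_P$ if and only if $\sF$ sends the initial object to the one-point set and carries every distinguished square to a pull-back square of sets. Thus the main step is to verify that the hypotheses of that lemma hold verbatim in our setting: the initial object is the empty scheme, and the distinguished squares are the Nisnevich distinguished squares of \defref{defn:Nis-cd}, so that the pull-back condition on squares of the form ~\eqref{eqn:Dist-square} is precisely the condition that ~\eqref{eqn:sheaf1} is a pull-back.

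For the reader's benefit I would also indicate why completeness is the crucial ingredient, since it drives the nontrivial \emph{if} direction. Completeness guarantees that every covering sieve in $t_{Nis}$ admits a refinement by a simple covering, that is, an iterated composite of the two-element coverings $\{Y \to X,\ A \to X\}$ coming from distinguished squares; one then checks the sheaf axiom along such a simple covering by induction on its length, the inductive step being exactly the assertion that $\sF$ sends the corresponding distinguished square to a pull-back, with the base case handled by $\sF(\emptyset) = *$. The converse, that a $t_{Nis}$-sheaf satisfies ~\eqref{eqn:sheaf1}, is the easier \emph{only if} part, but it is not the naive fiber-product computation $A \times_X A = A$, $A \times_X Y = B$ alone, because the term $\sF(Y \times_X Y)$ appearing in the descent diagram must be absorbed; this too is packaged into \cite[Lemma~2.9]{Voev1} and ultimately relies on the regularity established in \propref{prop:CB}. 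The only genuine obstacle is therefore bookkeeping: confirming that our notions of distinguished square and of initial object match Voevodsky's axioms, after which the result is immediate.
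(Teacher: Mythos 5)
Your proposal is correct and is essentially the paper's own proof: identify the Nisnevich site with the topology generated by the Nisnevich $cd$-structure via Theorem~\ref{thm:Nis-cd-main}, and then invoke Voevodsky's characterization of sheaves for a $cd$-structure, with completeness and regularity supplied by Proposition~\ref{prop:CB}. The only bookkeeping difference is that the paper cites \cite[Lemma~2.9, Corollary~2.17]{Voev1} together (the two directions of the equivalence resting on completeness and regularity respectively, plus a superfluous citation of Proposition~\ref{prop:bounded-main}), whereas you attribute the whole equivalence to Lemma~2.9 alone --- harmless, since you explicitly verify both completeness and regularity, which is all the combined citation requires.
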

\begin{proof} 
It follows immediately from Theorem~\ref{thm:Nis-cd-main}, 
Propositions~\ref{prop:CB}, ~\ref{prop:bounded-main} and
\cite[Lemma~2.9, Corollary~2.17]{Voev1}.
\end{proof}
\begin{cor}\label{cor:cohomology}
Let $X$ be a stack of dimension $n$ and let $\sF$ be a sheaf of abelian 
groups on ${{\sD}{\sM}_S}/X$. Then
\[
H^i_{Nis}(X, \sF) = 0 \ {\rm for} \ \ i > n.
\]
\end{cor}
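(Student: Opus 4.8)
The plan is to obtain this purely formally from Voevodsky's theory of bounded $cd$-structures, once the groundwork of the previous two sections is in place. First I would record that, by Theorem~\ref{thm:Nis-cd-main}, the Nisnevich topology on ${\sD}{\sM}_S$ agrees with the topology $t_{Nis}$ associated to the Nisnevich $cd$-structure; restricting to the slice site ${\sD}{\sM}_S/X$ (on which $H^*_{Nis}(X,-)$ is computed), the induced $cd$-structure is again complete and regular by Proposition~\ref{prop:CB} and bounded, with respect to the restricted density structure $D_*(-)$, by Proposition~\ref{prop:bounded-main}. Thus every hypothesis needed to run Voevodsky's machinery is available on ${\sD}{\sM}_S/X$.

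Second, I would invoke the general vanishing theorem of \cite{Voev1} (see also \cite{Voev2}): for a complete, regular and bounded $cd$-structure the cohomological dimension of an object in the associated topology is bounded above by its dimension in the given density structure. Applied to the final object $X$ of ${\sD}{\sM}_S/X$, this yields $H^i_{Nis}(X,\sF) = Ext^i_{Nis}(\Z_X,\sF) = 0$ for all $i > \dim_{D_*}(X)$ and every sheaf of abelian groups $\sF$ on ${\sD}{\sM}_S/X$. To pass from the simplicial descent statement to ordinary abelian-sheaf cohomology one applies the descent result to the Eilenberg--Mac Lane sheaves attached to $\sF$. It therefore remains only to identify $\dim_{D_*}(X)$ with the integer $n = \dim X$.

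For this last point I would use Lemma~\ref{lem:density1}, according to which $U \in D_i(X)$ if and only if $|U| \in D_i(|X|)$ for the standard density structure on the associated noetherian topological space $|X|$. Hence the dimension of $X$ relative to $D_*(-)$ equals the dimension of $|X|$ relative to the standard density structure, which by the elementary computation for noetherian spaces (\cite[Example~2.23]{Voev1}) is exactly the Krull dimension of $|X|$; and $|X|$ has dimension $n$, equal to the dimension of the stack $X$, as recorded just before Definition~\ref{defn:density}. This gives $\dim_{D_*}(X) = n$ and hence the asserted vanishing.

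The bulk of the difficulty has already been absorbed into Propositions~\ref{prop:CB} and \ref{prop:bounded-main}; no further geometric input is needed here. The one step deserving care is the passage from Voevodsky's homotopical descent statement to the vanishing of the abelian cohomology groups $Ext^i_{Nis}(\Z_X,-)$ above the density dimension, together with the verification that restriction to the slice ${\sD}{\sM}_S/X$ preserves completeness, regularity and boundedness; both are routine but should be stated explicitly.
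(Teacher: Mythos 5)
Your proposal is correct and takes essentially the same route as the paper, whose entire proof is precisely the citation chain Theorem~\ref{thm:Nis-cd-main}, Propositions~\ref{prop:CB} and~\ref{prop:bounded-main}, and Voevodsky's vanishing theorem \cite[Theorem~2.27]{Voev1}; your elaborations (restriction to the slice site and the identification of the density-structure dimension with $n$ via Lemma~\ref{lem:density1} and the standard structure on $|X|$) are exactly the details the paper leaves implicit in ``follows immediately.'' One caveat: the sentence about passing through Eilenberg--Mac\,Lane sheaves should be deleted, since Voevodsky's theorem is already stated directly for abelian sheaf cohomology --- indeed the logic runs the other way (boundedness yields the cohomological vanishing, which is then used to make the descent spectral sequence converge), so deriving the vanishing from a descent statement for Eilenberg--Mac\,Lane presheaves would be circular.
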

\begin{proof} It follows immediately from Theorem~\ref{thm:Nis-cd-main}, 
Propositions~\ref{prop:CB}, ~\ref{prop:bounded-main} and
\cite[Theorem~2.27]{Voev1}.
\end{proof}
\begin{remk}\label{remk:Zdim}
We remark that the same proof also shows that the 
Zariski cohomological dimension of a stack is bounded by its Krull dimension.
\end{remk}
In the special case when $X$ is a tame stack and $\sF$ is a quasi-coherent
sheaf, the above result was proven in \cite[Theorem~1.10]{Krishna}.

\subsection{Nisnevich descent}\label{subsection:NID}
Let $\sC$ be a Grothendieck site and let $Pres({\sC})$ denote the category
of presheaves of spectra on $\sC$. Recall from \cite[Section~3]{CHSW}
that a morphism $f:\sE \to \sE'$ of presheaves of spectra is called a 
{\sl global weak equivalence} if $\sE(X) \to \sE'(X)$ is a weak equivalence
of spectra for every object $X$ in $\sC$. It is called a {\sl local weak
equivalence} if it induces an isomorphism on the sheaves of stable
homotopy groups of the presheaves of spectra. Recall from \cite{Jardine1}
(see also \cite{Jardine2}) that there is model structure, 
called the {\sl injective model structure}
on $Pres({\sC})$ for which the weak equivalence is the local weak
equivalence. Moreover, a morphism $f:\sE \to \sE'$ of presheaves of spectra 
is a cofibration if each $\sE(X) \to \sE'(X)$ is a cofibration, and
the fibrations are defined by the right lifting property with respect to
trivial cofibrations. Recall also that in the above model structure,
a {\sl fibrant replacement} of $\sE$ is a trivial cofibration $\sE \to \sE'$
such that $\sE'$ is fibrant. In particular, $\sE \to \sE'$ is a local
weak equivalence. We also recall here the following from \cite{CHSW}.
\begin{defn}\label{defn:desc-def}
A presheaf of spectra $\sE$ on $\sC$ is said to satisfy the {\sl descent}
in the Grothendieck topology of the site $\sC$ if the fibrant replacement
map $\sE \to \sE'$ is a global weak equivalence. 
\end{defn}
For the presheaf of non-connective spectra $\sK$ on ${\sD}{\sM}_S$ which 
associates to a stack $X$, the $K$-theory spectrum of perfect complexes on 
$X$, let $\sK^{Nis}$ denote its fibrant replacement in the Grothendieck 
topology $({\sD}{\sM}_S)_{Nis}$. One says that $K$-theory satisfies the 
Nisnevich descent if the presheaf of spectra $\sK$ satisfies descent in
the Nisnevich topology ${\left({\sD}{\sM}\right)}_{Nis}$ on stacks.
For any $q \in \Z$, let $\sK_q$ denote the sheaf of $q$th stable homotopy 
groups of $\sK$ on ${\sD}{\sM}_S$. Thus, $\sK_q$ is the sheafification of
the presheaf $X \mapsto K_q(X)$.
The following is our main result about the Nisnevich descent.
\begin{thm}\label{thm:MNDS}
Let $S$ be the spectrum of a field $k$ and let $\wt{{\sT}{\sD}{\sM}}_S$ 
denote the full subcategory of $\wt{{\sD}{\sM}}_S$ consisting of tame stacks. 
Then, the $K$-theory of perfect complexes satisfies the Nisnevich descent on 
the Grothendieck site $\wt{{\sT}{\sD}{\sM}}^r_S$. In particular,
if $char(k) = 0$, then $K$-theory satisfies the Nisnevich descent on
$\wt{{\sD}{\sM}}^r_S$.
\end{thm}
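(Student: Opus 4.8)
The plan is to reduce \thmref{thm:MNDS} to the abstract Brown--Gersten descent machinery attached to a complete, regular and bounded $cd$-structure, feeding in the Mayer--Vietoris property of \corref{cor:MV} as the sole geometric input.

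First I would check that the representable Nisnevich topology on $\wt{{\sT}{\sD}{\sM}}_S$ is the topology of a complete, regular and bounded $cd$-structure. That it comes from a $cd$-structure is \corref{cor:Nis-cd-main**}, which I would restrict to the full subcategory of tame stacks. For this restriction to make sense I must know that the four corners of a distinguished square built on a tame stack with a coarse moduli scheme again lie in $\wt{{\sT}{\sD}{\sM}}_S$: if $p : Y \to X$ is representable and \'etale with $X$ tame, then $Y$ is tame (isotropy groups only shrink along representable \'etale maps) and $\un{Y}$ is a scheme by \lemref{lem:mod-sp*}$(v)$, while the remaining corners $A \inj X$ and $B = A {\times}_X Y$ are an open substack and a fiber product, so they stay in the subcategory by \propref{prop:mod-sch} and \remref{remk:tame-fiber-prod}. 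Completeness and regularity then follow from \propref{prop:CB}, and boundedness with respect to the density structure $D_*(-)$ from \propref{prop:bounded-main}, since the pullbacks and reducing squares produced in those proofs remain inside $\wt{{\sT}{\sD}{\sM}}_S$.

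Next I would invoke the descent criterion for such a $cd$-structure: a presheaf of spectra $\sE$ satisfies descent for the associated topology exactly when $\sE(\emptyset)$ is contractible and $\sE$ sends every distinguished square to a homotopy Cartesian square of spectra. This is the spectrum-level form of the Brown--Gersten property established in \cite{Voev1} (cf. the usage in \corref{cor:sheaf} and \corref{cor:cohomology}), in the framework of presheaves of spectra of \cite{CHSW}. Applied to $\sK$, the condition $\sK(\emptyset) \simeq *$ is immediate. A distinguished square of the representable Nisnevich $cd$-structure is, by \defref{defn:Nis-cd}, a Cartesian square
$$
\xymatrix@C.8pc{
B \ar[r] \ar[d] & Y \ar[d]^{p} \\
A \ar[r]_{e} & X}
$$
with $e$ an open immersion, $p$ representable and \'etale, and $Z {\times}_X Y \to Z$ an isomorphism of reduced closed substacks for $Z = X - A$; this is precisely the input of \corref{cor:MV} with $U = A$, $V = B$ and $f = p$. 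Hence $\sK$ carries it to a homotopy Cartesian square and therefore satisfies Nisnevich descent on $\wt{{\sT}{\sD}{\sM}}^r_S$. Since every separated Deligne--Mumford stack over a field of characteristic zero is tame, in that case $\wt{{\sT}{\sD}{\sM}}_S = \wt{{\sD}{\sM}}_S$ and the final assertion follows.

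The step I expect to require the most care is not the geometry---\corref{cor:MV} does all of that---but the verification that restricting to $\wt{{\sT}{\sD}{\sM}}_S$ preserves the three $cd$-structure axioms and that the abstract theorem, stated on the ambient category, transfers to the restricted site. Concretely, one must confirm that the reducing squares manufactured in the proof of \propref{prop:bounded-main} never leave the subcategory; this is exactly what the closure of $\wt{{\sT}{\sD}{\sM}}_S$ under fiber products and under open and reduced-closed substacks guarantees.
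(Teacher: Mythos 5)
Your proposal is correct and takes essentially the same route as the paper's proof: the paper likewise notes that tameness is preserved under the relevant fiber products (so the representable Nisnevich $cd$-structure restricts to $\wt{{\sT}{\sD}{\sM}}_S$) and then concludes by combining Corollary~\ref{cor:Nis-cd-main**}, Propositions~\ref{prop:CB} and~\ref{prop:bounded-main}, the Mayer--Vietoris property of Corollary~\ref{cor:MV}, and the abstract descent criterion of \cite[Theorem~3.4]{CHSW}. Your write-up simply makes explicit the closure-under-distinguished-squares verification that the paper compresses into a single sentence.
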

\begin{proof} It is an elementary fact that the fiber product of
representable maps of tame stacks is also tame.
The theorem now follows immediately from Corollaries~\ref{cor:MV},
~\ref{cor:Nis-cd-main**}, Propositions~\ref{prop:CB}, 
~\ref{prop:bounded-main} and \cite[Theorem~3.4]{CHSW}.
\end{proof}
\begin{cor}\label{cor:MNDS*}
Let $S$ be the spectrum of a field and let $X$ be in $\wt{{\sT}{\sD}{\sM}}_S$.
Then there is a strongly convergent spectral sequence 
\[
E^{p,q}_2 = H^p_{t^r_{Nis}}\left(X, \sK_q\right) \Rightarrow
K_{q-p}(X).
\]
\end{cor}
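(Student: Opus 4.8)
The plan is to realize this spectral sequence as the descent (hypercohomology) spectral sequence attached to the globally fibrant replacement of the presheaf of spectra $\sK$ on the representable Nisnevich site $\wt{{\sT}{\sD}{\sM}}^r_S$, and to extract strong convergence from the bounded cohomological dimension established in \corref{cor:cohomology}. Thus all the real work will already be contained in the descent statement \thmref{thm:MNDS} and the finiteness of \corref{cor:cohomology}; the present corollary is a formal consequence of the Jardine--Thomason machinery as packaged in \cite{CHSW}.

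First I would fix a fibrant replacement $\sK \to \sK^{Nis}$ in the injective model structure of \cite{Jardine1} on presheaves of spectra over $\wt{{\sT}{\sD}{\sM}}^r_S$. By \thmref{thm:MNDS}, the $K$-theory of perfect complexes satisfies Nisnevich descent on this site, which is exactly the assertion that $\sK(X) \to \sK^{Nis}(X)$ is a global weak equivalence; in particular $\pi_n \sK^{Nis}(X) \cong K_n(X)$ for all $n$ and every $X$ in $\wt{{\sT}{\sD}{\sM}}_S$. Next I would pass to the Postnikov tower $\{\tau_{\le q}\sK^{Nis}\}$ of the fibrant object and form the associated exact couple after evaluation at $X$. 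Since for a globally fibrant presheaf of spectra the homotopy groups of the global sections compute sheaf cohomology layer by layer, this yields the standard descent spectral sequence
\[
E_2^{p,q} = H^p_{t^r_{Nis}}(X, \un{\pi}_q\sK) \Rightarrow \pi_{q-p}\sK^{Nis}(X),
\]
where $\un{\pi}_q\sK$ denotes the Nisnevich sheafification of $X \mapsto \pi_q\sK(X)$. By definition $\un{\pi}_q\sK = \sK_q$, and by the previous paragraph the abutment is $\pi_{q-p}\sK^{Nis}(X) \cong K_{q-p}(X)$, so the $E_2$-page and target are precisely those claimed. The identification of the $E_2$-term with sheaf cohomology of the homotopy sheaf, and the very existence of this spectral sequence on our site, is the content of \cite[Theorem~3.4]{CHSW}, whose hypotheses—completeness, regularity and boundedness of the underlying $cd$-structure—are furnished by \propref{prop:CB} and \propref{prop:bounded-main} together with \corref{cor:Nis-cd-main**}.

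The remaining and only substantive point is strong convergence, and here I would invoke \corref{cor:cohomology}: for a stack $X$ of dimension $n$ one has $H^p_{t^r_{Nis}}(X, \sF) = 0$ for $p > n$ and every sheaf of abelian groups $\sF$. Consequently the $E_2$-page is supported in the finitely many columns $0 \le p \le n$, the Postnikov tower $\{\tau_{\le q}\sK^{Nis}(X)\}$ has uniformly bounded cohomological dimension, and the descent spectral sequence therefore converges strongly. The main obstacle to keep an eye on is precisely this convergence bookkeeping: one must check that the homotopy limit over the tower commutes with evaluation at $X$ up to the controlled error measured by the vanishing in \corref{cor:cohomology}, so that no $\lim^1$-type obstruction survives. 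Given the bounded $cd$-structure and the descent isomorphism already in hand, this is exactly the situation covered by the convergence half of \cite[Theorem~3.4]{CHSW}, and no further geometry of stacks enters.
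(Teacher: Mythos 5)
Your proposal is correct and follows essentially the same route as the paper: the paper's own proof consists of the single line that the result is an immediate consequence of \thmref{thm:MNDS} and \corref{cor:cohomology}, with the descent spectral sequence and its strong convergence under bounded cohomological dimension supplied by the machinery of \cite[Theorem~3.4]{CHSW}. You have simply made explicit the Postnikov-tower/exact-couple bookkeeping and the identification of the $E_2$-term and abutment that the paper leaves implicit.
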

\begin{proof} This is an immediate consequence of Theorem~\ref{thm:MNDS}
and Corollary~\ref{cor:cohomology}.
\end{proof}
The following is also an immediate consequence of Theorem~\ref{thm:MNDS}. 
\begin{cor}\label{cor:MNDS**} 
Let $S$ be the spectrum of a field and let $X$ be in $\wt{{\sT}{\sD}{\sM}}_S$.
Then for any representable Nisnevich cover $\sU \to X$, the natural
map
\[
K_*(X) \to \H^*(\sU, \sK)
\]
is an isomorphism.
\end{cor}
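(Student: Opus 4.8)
The plan is to read the statement off directly from the Nisnevich descent established in \thmref{thm:MNDS}, combined with the standard interpretation of descent as a homotopy limit over a \v{C}ech nerve. Write $\sU_\bullet$ for the \v{C}ech nerve of the cover $\sU \to X$, whose $n$-th term is the $(n{+}1)$-fold fiber product $\sU_n = \sU \times_X \cdots \times_X \sU$, and let $\H^*(\sU,\sK)$ denote the homotopy groups of the descent spectrum $\holim_{\Delta}\sK(\sU_\bullet)$. Since $\sU \to X$ is representable and $X$ is a tame stack with coarse moduli scheme, these iterated fiber products again lie in $\wt{{\sT}{\sD}{\sM}}_S$ by \propref{prop:mod-sch} together with the stability of tameness under representable fiber products noted in the proof of \thmref{thm:MNDS}; thus $\sU_\bullet$ is a genuine simplicial object of the site and the descent spectrum is well defined.

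First I would recall that, by \defref{defn:desc-def}, \thmref{thm:MNDS} says precisely that the injective-fibrant replacement $\sK \to \sK^{Nis}$ is a global weak equivalence. In particular $K(X) = \sK(X) \xrightarrow{\simeq} \sK^{Nis}(X)$ and, levelwise on the nerve, $\sK(\sU_n) \xrightarrow{\simeq} \sK^{Nis}(\sU_n)$ for every $n$. Because $\sK^{Nis}$ is injective-fibrant, it sends the covering $\sU \to X$ to a homotopy-limit diagram, so the augmentation $\sK^{Nis}(X) \to \holim_{\Delta}\sK^{Nis}(\sU_\bullet)$ is an equivalence. Assembling the objectwise equivalence $\sK \simeq \sK^{Nis}$ with this descent equivalence yields
\[
K(X) \xrightarrow{\simeq} \sK^{Nis}(X) \xrightarrow{\simeq}
\holim_{\Delta}\sK^{Nis}(\sU_\bullet) \xleftarrow{\simeq}
\holim_{\Delta}\sK(\sU_\bullet),
\]
and passing to homotopy groups gives the asserted isomorphism $K_*(X) \cong \H^*(\sU,\sK)$.

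The one point that uses the special geometry of our site is the middle equivalence: a priori an injective-fibrant presheaf satisfies descent only for \emph{hypercovers}, whereas the statement concerns the \v{C}ech nerve of a single cover. This is exactly where the $cd$-structure enters. Since the representable Nisnevich $cd$-structure on $\wt{{\sT}{\sD}{\sM}}_S$ is complete and regular (\propref{prop:CB}) and bounded with respect to the density structure $D_*(-)$ (\propref{prop:bounded-main}), Voevodsky's machinery---applied through \cite[Theorem~3.4]{CHSW} in the proof of \thmref{thm:MNDS}---guarantees that descent for the distinguished squares, i.e. the Mayer--Vietoris property of \corref{cor:MV}, already forces \v{C}ech descent for all covers. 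Concretely, \thmref{thm:split-M} exhibits any finite Nisnevich cover as a split, hence simple, cover built by iterating distinguished squares, so an induction on the length of the splitting sequence reduces \v{C}ech descent for $\sU \to X$ to the homotopy-Cartesian squares of \corref{cor:MV}; boundedness then supplies the finite cohomological dimension of \corref{cor:cohomology} needed for the strong convergence of the associated descent spectral sequence, so no hypercover refinement is required.

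I expect the main obstacle to be precisely this \v{C}ech-versus-hypercover comparison, together with the bookkeeping that the \v{C}ech nerve does not leave the subcategory $\wt{{\sT}{\sD}{\sM}}_S$. Both are already handled by the completeness--regularity--boundedness package assembled in Sections~\ref{section:NIS}--\ref{section:Nis-CRB}, so once those inputs are invoked the corollary becomes an essentially formal consequence of \thmref{thm:MNDS}.
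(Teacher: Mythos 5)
Your proof is correct and is precisely the argument the paper leaves implicit when it declares the corollary an ``immediate consequence'' of \thmref{thm:MNDS}: identify $K(X)$ with $\sK^{Nis}(X)$ by global weak equivalence, use fibrancy of $\sK^{Nis}$ to get descent along the \v{C}ech nerve, and transport back levelwise. One remark: the ``main obstacle'' you isolate in the last two paragraphs is illusory, since the \v{C}ech nerve of a cover is itself a hypercover (it is $\cosk_0$ of the covering, with the maps $\sU_n \to (\cosk_{n-1}\sU_\bullet)_n$ being isomorphisms in positive degrees), so an injective-fibrant presheaf of spectra satisfies \v{C}ech descent automatically and the detour through \thmref{thm:split-M}, splitting sequences, and the $cd$-structure is unnecessary for that step --- the $cd$-structure package is needed only where you first use it, namely inside the proof of \thmref{thm:MNDS} itself.
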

$\hfil \square$
\begin{remk}\label{remk:G-maps*}
If $S = {\rm Spec}(k)$ with $char(k) = 0$ and $G$ is an affine smooth
group scheme over $S$, then it follows from the Remark~\ref{remk:G-maps}
and Theorem~\ref{thm:MNDS} that the equivariant $K$-theory satisfies the
Nisnevich descent on the category ${Sch}^G_S$ of schemes of finite type
over $S$ with proper action of $G$.
\end{remk}
\begin{exm}\label{exm:Wrong-Nisnevich}
Let $S$ be a noetherian scheme and let $G$ be a smooth and affine group
scheme over $S$. Let ${Sch}^G_S$ denote the category of schemes of finite type
over $S$ with proper action of $G$. We have shown in Remark~\ref{remk:G-maps}
that there is a naturally defined Nisnevich site on ${Sch}^G_S$ where
a $G$-equivariant map $f : X \to Y$ is a {\sl Nisnevich cover} if
the associated map of quotient stacks $[X/G] \to [Y/G]$ is a Nisnevich
cover in the category ${\sD}{\sM}_S$. We have also shown that this site
on ${Sch}^G_S$ is given by a complete, regular and bounded $cd$-structure.
In \cite{Serpe1}, an {\sl isovariant Nisnevich} site on ${Sch}^G_S$ is studied,
where a $G$-equivariant map $f : X \to Y$ is defined to be a Nisnevich cover
if $f$ is a Nisnevich cover of schemes, forgetting the $G$-action and it
preserves the isotropy groups.
We show in the following example that this site is not a reasonable site to 
consider and may not be given by a $cd$-structure. 

So let $k = \R$ and let $G = {\Z}/2$. We set $X = {\rm Spec}(\C)$ and
$Y = {\rm Spec}(\R)$ with the obvious map $f : X \to Y$. Note that there
is a $G$-action on $X$ induced by the conjugation on $\C$ and $f$ is 
the quotient map. We consider the commutative 
diagram
\begin{equation}\label{eqn:exm2}
\xymatrix{
G \times X \ar[r]^{\phi} \ar[d]_{p} & X \ar[d]^{f} \\
X \ar[r]_{f} & Y,}
\end{equation}
where $\phi$ is the action map which is $G$-equivariant if we let $G$ act
diagonally on $G \times X$ for the trivial action on $X$ and the left 
multiplication action on $G$. The map $p$ is the projection.
Since $f$ is a principal bundle quotient, 
the above diagram is Cartesian. Since $G$ acts freely on itself, it acts freely
on $G \times X \cong X \coprod X$, and the map $\phi$ takes each component
isomorphically onto $X$ as schemes. 
Thus, we find that $\phi$ is a $G$-equivariant
map which is Nisnevich and isovariant. However, the map $f$ of quotients
is not a Nisnevich map. This shows in particular that the isovariant
Nisnevich site ({\sl cf.} \cite{Serpe1}) of $X$ is not equivalent to
the usual Nisnevich site of $X/G$. This also shows that the isovariant
Nisnevich site on ${Sch}^G_S$ may not be given by a $cd$-structure.
\end{exm} 
\noindent\emph{Acknowledgments.} 
The first author would like to thank A. Hogadi for various discussions 
which helped him in understanding some basic facts about stacks.


\begin{thebibliography}{99}
\bibitem{Ab-V}  D. Abramovich, A. Vistoli, {\sl Compactifying the space
of stable maps\/}, J. Amer. Math. Soc., {\bf 15}, no. 1, 
(2001), 27-75.  \
\bibitem{Artin}  M. Artin, {\sl Grothendieck Topologies\/},
Harvard University Seminar, Spring, (1962). \
\bibitem{Bo-Va} A. Bondal, M. van den Bergh, {\sl Generators and 
representability of functors in commutative and noncommutative geometry\/},
Mosc. Math. J.,  {\bf 3},  (2003),  no. 1, 1-36. \
\bibitem{BG} K. Brown,  S. Gersten, {\sl Algebraic $K$-theory as generalized 
sheaf cohomology\/},
Lecture Notes in Math., {\bf 341}, Springer-Verlag, New York, 1973, 
85-147. \
\bibitem{Cisinski} D. Cisinski, G. Tabuada, {\sl Non-connective $K$-theory 
via universal invariants\/}, math.AG/0903.3717, (2009). \
\bibitem{CKO} D. Cisinski, A. Krishna, P. {\O}stvaer, {\sl Descent theorems
for homotopy invariant $K$-theory of stacks\/}, In preparation, (2010). \
\bibitem{CHSW} G. Cortinas, C. Haesemeyer, M. Sclichting, C. Weibel 
{\sl Cyclic homology, cdh-cohomology and negative $K$-theory\/},  
Ann. of Math., (2)  {\bf 167}, (2008),  no. 2, 549-573. \
\bibitem{DM}  P. Deligne, D. Mumford, {\sl The irreducibility of the 
space of curves of given genus\/}, 
Inst. Hautes Etudes Sci. Publ. Math., {\bf 36}, (1969), 75-109. \
\bibitem{HKO} P. Hu, I. Kriz, K. Ormsby, {\sl Equivariant and real motivic
stable homotopy theory\/}, $K$-theory Preprint arxiv., (2010). \ 
\bibitem{Jardine1} J. Jardine, {\sl Simplicial presheaves\/},
J. Pure Appl. Algebra,  {\bf 47},  (1987),  no. 1, 35-87. \
\bibitem{Jardine2} J. Jardine, {\sl Generalized \'etale cohomology theories\/},
Progress in Mathematics, 146. Birkhauser Verlag, Basel, 1997. \
\bibitem{Joshua1}  R. Joshua, {\sl Higher intersection theory on algebraic 
stacks. II\/},  $K$-Theory,  {\bf 27},  (2002),  no. 3, 197-244. \
\bibitem{Joshua}  R. Joshua,  {\sl $K$-theory and absolute cohomology of
algebraic stacks\/}, Preprint, (2005). \
\bibitem{Joshua2}  R. Joshua, {\sl Bredon-style homology, cohomology and 
Riemann-Roch for algebraic stacks\/},  Adv. Math.,  {\bf 209}  (2007),  
no. 1, 1-68. \
\bibitem{Keller}  B. Keller,  {\sl On the cyclic homology of ringed 
spaces and schemes\/},  Doc. Math.,  {\bf 3},  (1998), 231-259. \ 
\bibitem{Knutson}  D. Knutson, {\sl Algebraic spaces\/},
Lecture Notes in Mathematics, {\bf 203}, Springer-Verlag, Berlin, 1971. \
\bibitem{Krishna}  A. Krishna, {\sl Perfect complexes on Deligne-Mumford 
stacks and applications\/},  J. K-Theory,  {\bf 4},  (2009),  no. 3, 
559-603. \
\bibitem{Krishna1}  A. Krishna, {\sl Equivariant $K$-theory and Higher Chow 
Groups\/}, math.AG/0906.3109, (2009). \
\bibitem{LMB}  G. Laumon, L. Moret-Baily {\sl Champs algebriques\/},
Ergebnisse der Mathematik und ihrer Grenzgebiete (3),
{\bf 39}, Springer-Verlag, Berlin, (2000). \
\bibitem{Milne} J. Milne,  {\sl \'Etale cohomology\/}, Princeton
University Press, Princeton, (1980). \
\bibitem{Merkurjev} A. Merkurjev, {\sl Equivariant $K$-theory\/},
Handbook of $K$-theory, Springer-Verlag, {\bf 1, 2},  (2005), 925-954. \
\bibitem{DR1} D. Rydh, {\sl The canonical embedding of an unramified 
morphism in an \'etale morphism\/}, Math. Z., to appear, (2010). \
\bibitem{DR} D. Rydh, {\sl Existence of quotients by finite groups and 
coarse moduli spaces\/}, math.AG/0708.3333, (2009). \ 
\bibitem{Serpe}  C. Serpe,  {\sl Resolution of unbounded complexes 
in Grothendieck categories\/},  J. Pure Appl. Algebra,  {\bf 177}
(2003),  no. 1, 103-112. \
\bibitem{Serpe1}  C. Serpe, {\sl Descent properties of equivariant K-theory\/},
math.AG/1002.2565, (2010). \
\bibitem{Spalten}  N. Spaltenstein,  {\sl Resolutions of unbounded
complexes\/}, Compositio Math., {\bf 65}, (1988), 121-154. \
\bibitem{Thomason2}  R. Thomason,  {\sl Algebraic $K$-theory of group scheme 
actions.\/},  Algebraic topology and algebraic $K$-theory 
(Princeton, N.J., 1983),  539-563, Ann. of Math. Stud., {\bf 113}, 
Princeton Univ. Press, Princeton, NJ, 1987. \
\bibitem{Thomason1}  R. Thomason,  {\sl Equivariant resolution, 
linearization, and Hilbert's fourteenth problem over arbitrary 
base schemes\/},  Adv. in Math.,  {\bf 65}, (1987),  no. 1, 16-34. \
\bibitem{Thomason3}  R. Thomason,  {\sl Equivariant algebraic vs. topological 
$K$-homology Atiyah-Segal-style\/},  Duke Math. J.,  {\bf 56},  
(1988),  no. 3, 589-636. \
\bibitem{TT}  R. Thomason, T. Trobaugh, {\sl Higher algebraic       
$K$-theory of schemes and of derived categories\/},                          
The Grothendieck Festschrift, Vol. III,  247--435, 
Progr. Math., {\bf 88}, Birkhauser Boston, Boston, MA, 1990. \
\bibitem{Toen1}  B. T\"oen,  {\sl Th{\'e}or\`eme de Riemann-Roch pour les 
champs de Deligne-Mumford\/},  $K$-Theory, {\bf 18}, (1999), 33-76. \ 
\bibitem{Toen}  B. T\"oen, {\sl Derived Azumaya's algebras and generators for 
twisted derived categories\/}, math.AG/1002.2599, (2010). \ 
\bibitem{Vistoli}  A. Vistoli,  {\sl Intersection theory on algebraic
stacks and on their moduli spaces\/}, Invent. Math., {\bf 97}, (1989),
613-670. \
\bibitem{Voev1} V. Voevodsky, {\sl Homotopy theory of simplicial sheaves in
completely decomposed topology\/},  J. Pure Appl. Algebra,  {\bf 214}
(2010),  1384-1398. \
\bibitem{Voev2} V. Voevodsky, {\sl Unstable motivic categories in Nisnevich
and cdh-topology\/}, J. Pure Appl. Algebra,  {\bf 214}
(2010),  1399-1406. \
\bibitem{Weibel}  C. Weibel,  {\sl Cyclic homology for schemes\/},
Proc. Amer. Math. Soc.,  {\bf 124},  (1996),  no. 6, 1655-1662.
\
\end{thebibliography}
\end{document}